\newtheorem{theo}{Theorem}
\newtheorem{lem}{Lemma}[section]
\newtheorem{defi}{Definition}[section]
\newtheorem{cor}{Corollary}[section]
\newtheorem{prop}{Proposition}[section]
\newtheorem{rmk}{Remark}[section]
\newcommand{\eps}{\varepsilon}
\newcommand{\M}{\mathcal{M}}
\newcommand{\MK}{\mathtt{MK}}
\renewcommand{\v}{\mathbf{v}}
\newcommand{\rd}{\mathrm d}
\newcommand{\R}{\mathbb{R}}
\newcommand{\W}{\MK}
\renewcommand{\H}{\mathtt{FR}}
\renewcommand{\d}{\mathtt{KFR}}
\newcommand{\KFR}{\mathtt{KFR}}
\newcommand{\FR}{\mathtt{FR}}
\newcommand{\N}{\mathbb{N}}
\newcommand{\tg}[1]{{ #1}}
\numberwithin{equation}{section}
\def\dive{\operatorname{div}}
\def\grad{\operatorname{grad}}
\begin{document}
\title[A JKO splitting scheme for $\KFR$ gradient flows]{A JKO splitting scheme for Kantorovich-Fisher-Rao gradient flows}
\date{}
\author{Thomas O. Gallou\"et}
\address{Thomas O. Gallou\"et: D\'epartement de math\'ematiques, Universit\'e de Li\`ege, All\'ee de la d\'ecouverte 12, B-4000 Li\`ege, Belgique.
E-mail: {\tt
 thomas.gallouet@ulg.ac.be}}
\author{L\'eonard Monsaingeon}
\address{L\'eonard Monsaingeon: Institut \'Elie Cartan de Lorraine, Universit\'e de Lorraine, Site de Nancy, B.P. 70239, F-54506 Vandoeuvre-l\`es-Nancy Cedex, France. E-mail: {\tt  leonard.monsaingeon@univ-lorraine.fr
}}

\subjclass{35K15, 35K57, 35K65, 47J30 }
 \keywords{Unbalanced Optimal transport, Wasserstein-Fisher-Rao, Hellinger-Kantorovich, Gradient flows, JKO scheme}

\maketitle

\begin{abstract}
In this article we set up a splitting variant of the Jordan-Kinderlehrer-Otto scheme in order to handle gradient flows with respect to the Kantorovich-Fisher-Rao metric, recently introduced and defined on the space of positive Radon measure with varying masses. 
We perform successively a time step for the quadratic Wasserstein/Monge-Kantorovich distance, and then for the Hellinger/Fisher-Rao distance.
Exploiting some inf-convolution structure of the metric we show convergence of the whole process for the standard class of energy functionals under suitable compactness assumptions, and investigate in details the case of internal energies. 
The interest is double: On the one hand we prove existence of weak solutions for a certain class of reaction-advection-diffusion equations, and on the other hand this process is constructive and well adapted to available numerical solvers. 
\end{abstract}
\section{Introduction}

A new Optimal Transport distance on the space of positive Radon measures has been recently introduced independently by three different teams \cite{Peyre_unbalanced_15,Peyre_inter_15,KMV_15,LMS_big_15,LMS_small_15}.
Contrarily to the classical Wasserstein-Monge-Kantorovich distances, which are restricted to the space of measures with fixed mass (typically probability measures), this new distance has the advantage of allowing for mass variations, can be computed between arbitrary measures, and does not require decay at infinity (such as finite moments).
In \cite{Peyre_unbalanced_15,Peyre_inter_15} the distance is called Wasserstein-Fisher-Rao and is introduced with imaging applications in mind.
In \cite{LMS_big_15,LMS_small_15} the distance is referred to as the Hellinger-Kantorovich one, and was studied as a particular case of a larger class of Optimal Transport problems including primal/dual and static formulations.
The second author introduced the same distance in \cite{KMV_15}, with applications to population dynamics and gradient flows in mind.
In this paper we propose the name Kantorovich-Fisher-Rao for this metric ($\KFR$ in the sequel), taking into account all contributions.

On one side we aim here at understanding the local behavior of the $\KFR$ metric with respect to the by now classical  quadratic Monge-Kantorovich/Wasserstein metric $\MK$ and the Hellinger/Fisher-Rao metric $\FR$.
On the other side we want to use this information to prove existence of weak solutions to gradient flows while avoiding to look too closely into the geometry of the $\KFR$ space.
Moreover our constructive approach is naturally adapted to available numerical schemes and Monge-Amp\`ere solvers. 

A possible way to formalize abstract  gradient flow structures is to prove convergence of the corresponding Minimizing Movement scheme, as introduced by De Giorgi \cite{DeGio_93} and later exploited by Jordan-Kinderlehrer-Otto for the $\MK$ metric \cite{JKO_98}.
Given a metric space $(X,d)$ and a functional $F:X\to\R$, the JKO scheme with time-step $\tau>0$ writes 
\begin{equation}
x^{n+1}\in \underset{x\in X}{\operatorname{Argmin}}\left\{\frac{1}{2\tau}d^2 (x,x^n)+ F(x)\right\}.
\label{eq:minimizing_scheme_intro}
\end{equation}
Letting $\tau\to 0$ one should expect to recover a weak solution of the gradient flow 
\begin{equation}
\dot x (t) =-\grad_d F(x(t)).
\label{eq:grad_flow_intro}
\end{equation}
Looking at \eqref{eq:grad_flow_intro}, which is a differential equality between infinitesimal variations, we guess that only the local behavior of the metric $d$ matters in \eqref{eq:minimizing_scheme_intro}. 

The starting point of our analysis is therefore the local structure of the Kantorovich-Fisher-Rao metric, which endows the space of positive Radon measures $\rho\in \M^+$ with a formal Riemannian structure \cite{KMV_15}.
Based on some inf-convolution structure, our heuristic considerations will suggest that, infinitesimally, $\KFR$ should be the orthogonal sum of $\MK$ and $\FR$: 
\begin{equation*}
\d^2\approx \W^2+\H^2.
\end{equation*}
More precisely, we will show that in the tangent plane there holds
\begin{equation}
\label{diff_intro}
\|\grad _\d\mathcal F(\rho)\|^2
 = \|\grad _\W\mathcal F(\rho)\|^2
+\|\grad _\H\mathcal F(\rho)\|^2
\end{equation}
at least formally for reasonable functionals $\mathcal F$, and this is in fact the key point in this work.
The notion of metric gradients and tangent norms appearing in \eqref{diff_intro} will be precised in section~\ref{sec:prel}.
This naturally leads to a splitting approach for $\KFR$ Minimizing Movements: we successively run a first time step for $\MK$, leading to the diffusion term in the associated PDE, and then a second step for $\FR$, leading to the reaction term in the PDE.
This can also be viewed as replacing the direct approximation ``by hypotenuses'' in the JKO scheme (with the $\KFR$ distance) by a double approximation ``by legs'' (each of the legs corresponding to one of the $\FR,\MK$ metrics).
Formula \eqref{diff_intro} also indicates that the energy dissipation $D(t):=-\frac {dF}{dt}=|\dot x|^2=|\grad F|^2$ will be correctly approximated in \eqref{eq:grad_flow_intro}. 
One elementary Monge-Kantorovich JKO step is now well known, see for instance \cite{Santambroggio_book} and references therein.
On the other hand the Fisher-Rao metric enjoys a Riemannian structure that can be recast, up to a change of variable, into a convex Hilbertian setting, and therefore the reaction step should be easy to handle numerically.

Here we show that the classical estimates (energy monotonicity, total square distance, mass control, BV\ldots) propagate along each $\MK$ and $\FR$ substeps, and nicely fit together in the unified $\KFR$ framework.
This allows us to prove existence of weak solutions for a whole class of reaction-advection-diffusion PDEs
$$
\partial_t\rho =\dive(\rho\nabla(U'(\rho)+\Psi+K\ast\rho))- \rho (U'(\rho)+\Psi+K\ast\rho)
$$
associated with $\KFR$ gradient flows
$$
\partial_t\rho =-\grad_{\KFR}\mathcal F(\rho),\qquad \mathcal F(\rho)=\int_{\Omega}\big\{U(\rho)+\Psi(x)\rho+\frac 12 \rho K\star \rho\big\}.
$$
The structural conditions on the \emph{internal energy} $U$, \emph{external potential} $\Psi$, \emph{interaction kernel} $K$, and the meaning of the \emph{metric gradient} $\grad_{\KFR}$ will be precised later on.
Moreover we retrieve a natural Energy Dissipation Inequality at least in some particular cases, which is well known \cite{AGS_08} to completely characterize metric gradient flows.

Our splitting method has several interests:
First we avoid a possibly delicate geometrical analysis of the $\KFR$ space, in particular we do not need to differentiate the squared $\KFR$ distance. 
This is usually required to derive the Euler-Lagrange equations in the JKO scheme, but might not be straightforward here (see Section~\ref{section:uncoupling_inf-sup_convolution} for discussions).
Secondly, the approach leads to a new constructive existence proof for weak solutions to the above class of PDEs, and can be implemented numerically (see \cite{kinderlehrer1999approximation} for an early application of this idea).
For one elementary $\MK$ step many discretizations are now available, such as the semi-discrete scheme \cite{merigot2011multiscale,benamou2014discretization}, the augmented Lagrangian procedure \cite{benamou_AL2JKO}, or the Entropic relaxation \cite{peyre2015entropic}. 
The Fisher-Rao minimizing step should not be difficult to implement, since the problem is convex with the good choice of variables.

Finally it is worth stressing that the $\KFR$ distance is, by construction, well adapted to handle general transport and reaction processes in a unified framework.
One very natural extension of this work would be to consider two separate energy functionals $\mathcal F_1,\mathcal F_2$, to be used respectively in the diffusion and reaction parts. 
This natural approach is the purpose of our ongoing works \cite{GLM_16,theseMax} and should allow to treat more general equations (not necessarily gradient flows).
However, the rigorous analysis requires suitable compatibility conditions between the two driving functionals and becomes quite technical (see e.g. Remark~\ref{rmk:total_square_dist_2_functionals}).
For the sake of exposition we chose to restrict here to the case of pure gradient flows $\mathcal F_1=\mathcal F=\mathcal F_2$, when the technical estimates are more straightforward and allow to recover dissipation estimates (see Section~\ref{section:EDI}).\\

The paper is structured as follows.
In Section \ref{sec:prel} we recall some basic facts on the three metrics involved: the quadratic Monge-Kantorovich $\MK$, the Fisher-Rao $\FR$, and the Kantorovich-Fisher-Rao $\KFR$ distances.
We highlight the three differential Riemannian structures and gradient flow interpretations.
Section \ref{section:uncoupling_inf-sup_convolution} details the local relation between the three metrics, in particular the infinitesimal uncoupling of the inf-convolution.
For the sake of exposition we deliberately remain formal in order to motivate the rigorous analysis in the next sections.
In section \ref{section:minimizing_scheme} we define the splitting minimizing movement scheme for the $\KFR$ distance and prove, under natural compactness assumptions, the convergence towards a weak solution of the expected PDE.
As an example in section \ref{section:compactness} we work out all the technical details for the particular case of internal energies, and show that the previous abstract compactness hypothesis holds.

\section{Preliminaries}\label{sec:prel}
From now on we always assume that $\Omega\subset \R^d$ is a convex subset, possibly unbounded.
In this section we recall some facts about the Wasserstein-Monge-Kantorovich and Hellinger-Fisher-Rao distances $\MK,\H$, and introduce the Kantorovich-Fisher-Rao distance $\KFR$.
We also present the differential points of view for each of them, allowing to retrieve the three corresponding pseudo Riemannian structures and compute gradients of functionals with respect to the $\W,\H,\d$ metrics.
\subsection{The quadratic Monge-Kantorovich distance $\W$}
\label{section:Wasserstein_distance}
We refer to \cite{villani_small} for an introduction and to \cite{villani_big} for a complete overview of the Wasserstein-Monge-Kantorovich distances.
\begin{defi}
\label{defi:W_plans}
For any nonnegative Radon measures $\rho_0,\rho_1\in \M^+_2$ with same mass $|\rho_0|=m=|\rho_1|$ and finite second moments, the quadratic Monge-Kantorovich distance is
\begin{equation}
\label{eq:def_W_plans}
\W^2(\rho_0,\rho_1)=\min\limits_{\gamma\in\Gamma[\rho_0,\rho_1]}\int_{\Omega\times\Omega}|x-y|^2\rd\gamma(x,y),
\end{equation}
where the admissible set of \emph{transference plans} $\Gamma[\rho_0,\rho_1]$ consists of nonnegative measures $\gamma\in \M^+(\Omega\times\Omega)$ with mass $|\gamma|=m$ and prescribed marginals $\Pi_x(\gamma)=\rho_0(x)$ and $\Pi_y(\gamma)=\rho_1(y)$.
\end{defi}
\noindent
The minimizer is unique and is called an optimal plan.
When $\rho_0$ does not charge small sets we have the characterization in terms of transport \emph{maps}:
\begin{theo}[Brenier, Gangbo-McCann, \cite{brenier1991polar,gangbo_mccann_96}]
 \label{theo:Gangbo_McCann}
 With the same assumptions as in Definition~\ref{defi:W_plans}, assume that $\rho_0$ does not give mass to $\mathcal H^{d-1}$ sets. Then
 \begin{equation}
  \label{eq:def_W_maps}
 \W^2(\rho_0,\rho_1)=\min\limits_{\rho_1=\mathbf t \#\rho_0}\int_\Omega|x-\mathbf t(x)|^2\rd\rho_0(x),
 \end{equation}
 and the optimal transport map $\mathbf t$ is unique $\rd\rho_0$ almost everywhere. 
\end{theo}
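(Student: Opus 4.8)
The plan is to reduce the map formulation \eqref{eq:def_W_maps} to the already-established plan formulation \eqref{eq:def_W_plans} via Kantorovich duality and the geometry of optimal plans. Since the existence and uniqueness of an optimal plan $\gamma\in\Gamma[\rho_0,\rho_1]$ are granted, it suffices to show that this optimal $\gamma$ is concentrated on the graph of a single map $\mathbf{t}$, and that $\mathbf{t}$ is the gradient of a convex function; this yields both the identity \eqref{eq:def_W_maps} (any admissible map induces an admissible plan, so the two infima coincide once the optimal plan is supported on a graph) and the uniqueness assertion.

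First I would invoke Kantorovich duality to produce a pair of potentials $(\phi,\psi)$ with $\phi(x)+\psi(y)\le|x-y|^2$ everywhere and equality $\gamma$-almost everywhere. Expanding the quadratic cost as $|x-y|^2=|x|^2-2\langle x,y\rangle+|y|^2$ and absorbing the pure $|x|^2$ and $|y|^2$ terms into the potentials reduces the problem to the linear cost $-\langle x,y\rangle$, and a standard argument then shows that the support of $\gamma$ is cyclically monotone. By Rockafellar's theorem a cyclically monotone set is contained in the graph of the subdifferential $\partial\varphi$ of a proper lower semicontinuous convex function $\varphi$ (concretely $\varphi(x)=\tfrac12|x|^2-\tfrac12\phi(x)$ up to sign conventions), so that $y\in\partial\varphi(x)$ for $\gamma$-a.e.\ $(x,y)$.

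The crux is then to upgrade the set-valued inclusion $y\in\partial\varphi(x)$ to a genuine map. Here the hypothesis that $\rho_0$ does not give mass to $\mathcal H^{d-1}$ sets is decisive: the set $N$ of points where the convex function $\varphi$ fails to be differentiable is $(d-1)$-rectifiable, being covered by countably many Lipschitz hypersurfaces, hence $\mathcal H^{d-1}$-negligible in the relevant sense, so $\rho_0(N)=0$. At every remaining point $x$ the subdifferential reduces to the singleton $\partial\varphi(x)=\{\nabla\varphi(x)\}$, which forces $\gamma$ to concentrate on the graph of $\mathbf{t}:=\nabla\varphi$. Consequently $\gamma=(\mathrm{id},\mathbf{t})\#\rho_0$, reading off the second marginal gives $\mathbf{t}\#\rho_0=\rho_1$, and the plan cost equals $\int_\Omega|x-\mathbf{t}(x)|^2\rd\rho_0$, proving \eqref{eq:def_W_maps}. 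Uniqueness of $\mathbf{t}$ then follows from the uniqueness of the optimal plan together with the essential uniqueness of the convex potential on the support of $\rho_0$.

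I expect the main obstacle to be this rectifiability/negligibility step: establishing that the non-differentiability set of a convex function is covered by countably many Lipschitz graphs and is therefore invisible to $\rho_0$ under the stated assumption. This is precisely where the regularity hypothesis on $\rho_0$ is consumed, and it is the only point where one must leave the soft duality and monotonicity arguments for genuine geometric measure theory.
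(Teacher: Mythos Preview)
The paper does not prove this theorem: it is stated as a classical result with references to Brenier and Gangbo--McCann, and no proof is given. Your sketch is a correct outline of the standard argument (Kantorovich duality, cyclical monotonicity, Rockafellar's theorem, and the $(d-1)$-rectifiability of the non-differentiability set of a convex function), so there is nothing to compare against --- you have simply supplied the classical proof that the paper chose to cite rather than reproduce.
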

\noindent
We recall the definition of pushforwards by maps $\mathbf t:\Omega\to\Omega$
$$
\rho_1=\mathbf t\# \rho_0
\quad\Leftrightarrow\quad
\int_\Omega \phi(y)\rd\rho_1(y)=\int_\Omega\phi(\mathbf t(x))\rd\rho_0(x)
\quad\mbox{for all }\phi\in\mathcal C_c(\Omega).
$$
\noindent
As first pointed out by Benamou and Brenier \cite{BB_00} we also have the following dynamic representation of the Wasserstein distance:
\begin{theo}[Benamou-Brenier formula, \cite{AGS_08,BB_00}]
\label{theo:benamou_brenier}
There holds
\begin{equation}
\label{eq:benamou_brenier_formula}
\W^2(\rho_0,\rho_1)=\min\limits_{(\rho,\v)\in\mathcal A_\W[\rho_0,\rho_1]}\int_0^1\int_\Omega|\v_t|^2\rd\rho_t\rd t,
\end{equation}
where the admissible set $\mathcal A_\W[\rho_0,\rho_1]$ consists of curves $[0,1]\ni t\mapsto (\rho_t,\v_t)\in \mathcal  M^+(\Omega)\times L^2(\Omega,\rd\rho_t)^d$ such that $t\mapsto\rho_t$ is narrowly continuous with endpoints $\rho_0,\rho_1$ and solving the continuity equation
$$
\partial_t\rho_t+\dive(\rho_t\v_t)=0
$$
in the sense of distributions $\mathcal D'((0,1)\times\Omega)$.
\end{theo}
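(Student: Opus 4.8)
The plan is to establish \eqref{eq:benamou_brenier_formula} by proving two matching inequalities, writing $B(\rho_0,\rho_1)$ for the dynamic infimum on the right-hand side, and to exhibit an explicit minimizer so that the infimum is actually attained (justifying the $\min$).

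For the upper bound $B\le\W^2$ I would use an optimal plan $\gamma\in\Gamma[\rho_0,\rho_1]$ from Definition~\ref{defi:W_plans} together with the classical displacement interpolation. Setting $\pi_t(x,y)=(1-t)x+ty$ and $\rho_t:=(\pi_t)\#\gamma$, I would define a velocity field $\v_t$ through the disintegration of $\gamma$ by imposing $\v_t(\pi_t(x,y))=y-x$. Differentiating $\int_\Omega\phi\rd\rho_t=\int_{\Omega\times\Omega}\phi((1-t)x+ty)\rd\gamma$ in $t$ against $\phi\in\mathcal C_c^\infty$ gives $\frac{d}{dt}\int_\Omega\phi\rd\rho_t=\int_\Omega\nabla\phi\cdot\v_t\rd\rho_t$, so that $(\rho_t,\v_t)\in\mathcal A_\W[\rho_0,\rho_1]$, and its action is
$$
\int_0^1\int_\Omega|\v_t|^2\rd\rho_t\rd t=\int_0^1\int_{\Omega\times\Omega}|y-x|^2\rd\gamma\rd t=\int_{\Omega\times\Omega}|y-x|^2\rd\gamma=\W^2(\rho_0,\rho_1).
$$
This single curve is therefore admissible with action exactly $\W^2$, proving at once $B\le\W^2$ and that the infimum is attained.

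For the lower bound $\W^2\le B$ I would show that any admissible curve controls the distance between its endpoints. The key estimate is that a solution of the continuity equation yields an absolutely continuous curve in $(\M^+,\W)$ whose metric speed is dominated by the velocity, namely $\W(\rho_s,\rho_t)\le\int_s^t\|\v_r\|_{L^2(\rd\rho_r)}\rd r$ for $s<t$. Granting this, the Cauchy-Schwarz inequality in time gives $\W(\rho_0,\rho_1)\le\int_0^1\|\v_t\|_{L^2(\rd\rho_t)}\rd t\le\big(\int_0^1\int_\Omega|\v_t|^2\rd\rho_t\rd t\big)^{1/2}$; squaring and taking the infimum over $\mathcal A_\W[\rho_0,\rho_1]$ then yields $\W^2\le B$.

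The hard part will be the metric-speed estimate itself, since $\v_t$ is only $L^2(\rd\rho_t)$ and the naive characteristic flow $\dot X_t=\v_t(X_t)$ need not make classical sense. I would handle this by mollifying $\v_t$ in space to obtain a Lipschitz field, solving the regularized ODE to get a flow $X^\eps_t$, using $(X^\eps_t)\#\rho_s$ as a competitor coupling between $\rho_s$ and the approximate endpoint, bounding its transport cost by $\int_s^t\|\v^\eps_r\|_{L^2}\rd r$, and finally passing to the limit $\eps\to0$ while controlling the commutator error and the convergence of the regularized endpoints. Because the $\W$ framework fixes the total mass, after normalizing by $m=|\rho_0|=|\rho_1|$ one is reduced to probability measures, so the superposition principle and the absolute-continuity characterization of \cite{AGS_08} apply directly and provide a clean alternative route to the same estimate.
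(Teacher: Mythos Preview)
The paper does not prove Theorem~\ref{theo:benamou_brenier}: it is quoted as a known result with citations to \cite{AGS_08,BB_00}, and the text moves directly to Remark~\ref{rmk:functional_space_L2(drho)} without any argument. So there is no in-paper proof to compare against; your sketch is essentially the standard proof found in those references.

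One technical point worth tightening in your upper bound: the prescription $\v_t(\pi_t(x,y))=y-x$ tacitly assumes that $\pi_t$ is injective on $\operatorname{supp}\gamma$, which need not hold when the optimal plan is not induced by a map. The usual fix is either (i) to invoke Theorem~\ref{theo:Gangbo_McCann} under the extra hypothesis that $\rho_0$ does not charge $\mathcal H^{d-1}$ sets, so that $\gamma=(\operatorname{id},\mathbf t)\#\rho_0$ and $\pi_t$ is indeed injective, or (ii) in full generality to define $\v_t$ as the barycenter of $y-x$ under the disintegration of $\gamma$ with respect to $\pi_t$, in which case Jensen's inequality turns your action computation into the inequality $\int_0^1\int_\Omega|\v_t|^2\rd\rho_t\rd t\le\W^2(\rho_0,\rho_1)$, which is all you need for $B\le\W^2$. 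With that adjustment your two-inequality strategy, together with the metric-speed estimate from \cite{AGS_08} for the lower bound, is correct.
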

\begin{rmk}
\label{rmk:functional_space_L2(drho)}
 Note that, since we are minimizing the kinetic energy in \eqref{eq:benamou_brenier_formula}, the admissible velocity fields $\v$ are implicitly taken in the varying weighted space $\v\in L^2(0,1;L^2(\rd\rho_t))$.
 For such velocities in this energy space, the action of the product $\rho_t\v_t$ is well defined against any smooth test-function $\varphi\in \mathcal C^\infty_c((0,1)\times\Omega)\subset L^2(0,1;L^2(\rd\rho_t))$ in the distributional formulation of the continuity equation, i-e
 $$
 -\langle \dive(\rho \v),\varphi\rangle_{\mathcal D',\mathcal D}=\langle \rho \v,\nabla\varphi\rangle_{\mathcal D',\mathcal D}=\left( \v,\nabla\varphi\right)_{L^2(0,1;L^2(\rd\rho_t))}=\int_0^1\int_{\Omega} \v_t\cdot \nabla \varphi \,\rd\rho_t\,\rd t.
 $$
\end{rmk}
\noindent
In \eqref{eq:benamou_brenier_formula} a minimizing curve $t\mapsto\rho_t$ is of course a geodesics, with constant metric speed $\|\v_t\|^2_{L^2(\rd\rho_t)}=cst=\W^2(\rho_0,\rho_1)$.
Note that we allow here for any arbitrary mass $|\rho_0|=m=|\rho_1|>0$, and that the distance scales as $\W^2(\alpha\rho_0,\alpha\rho_1)=\alpha\W^2(\rho_0,\rho_1)$. 
This is apparent in all three formulations \eqref{eq:def_W_plans}\eqref{eq:def_W_maps}\eqref{eq:benamou_brenier_formula}, which are linear in $\gamma$, $\rho_0,\rho_1$, and $\rho_t$ respectively.

As is now well-known from the works of Otto \cite{Otto_PME_2001}, we can view the set of measures with fixed mass as a pseudo-Riemannian manifold, endowing the tangent plane
$$
T_\rho\M^+_\W=\{\partial_t\rho=-\dive(\rho\v)\qquad \mbox{evaluated at }t=0\}
$$
with the metrics
$$
\|\partial_t\rho\|^2_{T_\rho\M^+_\W}:=\inf\left\{\|\v\|^2_{L^2(\rd\rho)}:\quad \partial_t\rho=-\dive(\rho\v)\right\}.
$$

It is easy to see that, among all possible velocities $\v$ representing the same tangent vector $\partial_t\rho=-\dive(\rho\v)$, there is a unique one with minimal $L^2(\rd\rho)$ norm. 
A standard computation \cite{villani_small} shows that this particular velocity is necessarily potential, $\v=\nabla p$ for a pressure function $p$ uniquely defined up to constants (see the proof of Proposition~\ref{prop:coupling=uncoupling_d} below at least for smooth positive densities $\rho$). 
As a consequence we always choose to represent
$$
\|\partial_t\rho\|^2_{T_\rho\M^+_\W}=\|\nabla p\|^2_{L^2(\rd\rho)}
\quad \mbox{with the identification }\partial_t\rho=-\dive(\rho\nabla p).
$$
Here we remained formal and refer again to \cite{villani_small,villani_big} for details.
Now metric gradients $\grad_\W$ can be computed by the chain rule as follows: If $\partial_t\rho_t=-\dive(\rho_t \nabla p_t)$ is a smooth curve passing through $\rho_t(0)=\rho$ with arbitrary initial velocity $\zeta=\partial_t\rho(0)=-\dive(\rho \nabla p)$ then for functionals $\mathcal{F}(\rho)=\int_\Omega F(\rho(x),x)\rd x$
\begin{align*}
\left<\grad_\W \mathcal F(\rho),\zeta\right>_{T_\rho\M^+_\W} & =\left.\frac{d}{dt}\mathcal F (\rho_t)\right|_{t=0} = \left.\frac{d}{dt}\left(\int_\Omega F(\rho_t(x),x)\rd x\right)\right|_{t=0}\\
& =\int_{\Omega}F'(\rho)\times\{-\dive(\rho\nabla p)\}=\int_\Omega \nabla F'(\rho)\cdot \nabla p\,\rd\rho\\
& =\left( \nabla F'(\rho),\nabla p \right)_{L^2(\rd\rho)},
\end{align*}
where $F'(\rho)=\frac{\delta F}{\delta\rho}$ stands for the standard first variation with respect to $\rho$.
For the classical case $\mathcal F(\rho)=\int_\Omega \{U(\rho) + \Psi \rho +\frac 12 \rho K\star\rho \}$ considered here this means $F'(\rho)=U'(\rho)+\Psi(x)+K\star\rho$.
This shows that one should identify gradients
\begin{equation*}
\grad_\W\mathcal F(\rho)=-\dive(\rho \nabla F'(\rho))
\label{eq:formula_grad_W}
\end{equation*}
through the $L^2(\rd\rho)$ action in the tangent plane, and as a consequence the Monge-Kantorovich gradients flows read
\begin{equation}
\partial_t\rho=-\grad_\W\mathcal F(\rho)
\qquad \leftrightarrow\qquad \partial_t\rho=\dive(\rho F'(\rho)).
\label{eq:formula_grad_flow_W}
\end{equation}
%
\subsection{The Fisher-Rao distance $\H$}
\label{section:HFR_metrics}
The classical Hellinger-Kakutani distance \cite{hellinger,kakutani1948equivalence}, or Fisher-Rao metric, was first introduced for probability measures and is well known in statistics and information theory for its  connections with the Kullback's divergence and Fisher information \cite{bogachev_measure_theory}.
It can be extended to arbitrary nonnegative measures as
\begin{defi}
 The Fisher-Rao distance between measures $\rho_0,\rho_1\in\M^+$ is given by
 \begin{equation}
 \label{eq:def_HFR}
 \H^2(\rho_0,\rho_1)\overset{\text{def}}{=}\min\limits_{{ (\rho,r)}\in \mathcal A_{\H}[\rho_0,\rho_1]}\int_0^1\int_{\Omega}|r_t(x)|^2 \rd \rho_t(x)\,\rd t = 4\int_{\Omega}\left|\sqrt{\frac{\rd \rho_0}{\rd\lambda}}-\sqrt{\frac{\rd \rho_1}{\rd\lambda}}\right|^2\rd\lambda.
\end{equation}
 
 The admissible set $\mathcal A_{\H}[\rho_0,\rho_1]$ consists of curves $ [0,1]\ni t\mapsto (\rho_t,r_t)\in\mathcal M^+(\Omega)\times L^2(\Omega,\rd\rho_t)$ such that $ t\mapsto \rho_t$ is narrowly continuous with endpoints $ \rho_0,\rho_1$, and
 $$
 \partial_t\rho_t=\rho_tr_t
 $$
 in the sense of distributions $\mathcal D'((0,1)\times\Omega)$.
 \end{defi}
 { As in Remark~\ref{rmk:functional_space_L2(drho)}
the reaction term $r$ implicitly belongs to the energy space $L^2(0,1;L^2(\rd\rho_t))$, so that $\rho r$ is a well-defined distribution $\mathcal D'((0,1)\times\Omega)$ through the $\left(r,.\right)_{L^2(0,1;L^2(\rd\rho_t))}$ scalar product.
 }
  In the last explicit formula $\lambda$ is any reference measure such that $\rho_0,\rho_1$ are both absolutely continuous with respect to $\lambda$, with Radon-Nikodym derivatives $\frac{\rd\rho_i}{\rd\lambda}$.
 By $1$-homogeneity this expression doe not depend on the choice of $\lambda$, and the normalizing factor $4$ is chosen so that the metric  for the pivot space in the first dynamic formulation is exactly $L^2(\rd\rho_t)$ and not some other multiple $\beta{L^2(\rd\rho_t)}$.

At least for absolutely continuous measures $\rd\rho_0,\rd\rho_1\ll\rd x$ one can check that the minimum in the first definition is attained along the geodesic
$$
\rho_t=[(1-t)\sqrt{\rho_0}+t\sqrt{\rho_1}]^2
\quad\mbox{and}\quad
r_t:=2\frac{\sqrt{\rho_1}-\sqrt{\rho_0}}{\sqrt{\rho_t}}\in L^2(\rd\rho_t).
$$
Moreover this optimal curve $\partial_t\rho_t=\rho_t r_t$ has constant metric speed $\|r_t\|^2_{L^2(\rd\rho_t)}= 4\int_\Omega|\sqrt{\rho_1}-\sqrt{\rho_0}|^2=\H^2(\rho_0,\rho_1)$, which should be expected for geodesics.\\

More importantly, the first Lagrangian formulation in \eqref{eq:def_HFR} suggests to view the metric space $(\M^+,\H)$ as a Riemannian manifold, endowing the tangent plane
$$
T_\rho\M^+_\H=\Big\{\partial_t\rho_t=\rho_t r_t\qquad \mbox{evaluated at }t=0\Big\}
$$
with the metrics
$$
\|\partial_t\rho\|^2_{T_\rho\M^+_\H}=\|r\|^2_{L^2(\rd\rho)}
\quad \mbox{with the identification }\partial_t\rho = \rho r.
$$
Metric gradients $\grad_\H$ can then be computed by the chain rule as follows:
If $\partial_t\rho_t=\rho_t r_t$ is a { smooth} curve passing through $\rho_t(0)=\rho$ with arbitrary initial velocity $\zeta=\partial_t\rho=\rho r$ then for functionals $\mathcal{F}(\rho)=\int_\Omega F(\rho(x),x)\rd x$ we can compute
\begin{align*}
\left<\grad \mathcal F(\rho),\zeta\right>_{T_\rho\M^+_\H} & =\left.\frac{d}{dt}\mathcal F (\rho_t)\right|_{t=0} = \left.\frac{d}{dt}\left(\int_\Omega F(\rho_t(x),x)\rd x\right)\right|_{t=0}\\
& =\int_{\Omega}F'(\rho)\rho r=\left< F'(\rho),r \right>_{L^2(\rd\rho)},
\end{align*}
where $F'(\rho)=\frac{\delta F}{\delta\rho}$ as before.
This shows that
\begin{equation}
\grad_\H\mathcal F(\rho)=\rho F'(\rho)
\label{eq:formula_grad_HFR}
\end{equation}
with identification through the $L^2(\rd\rho)$ action in the tangent plane, and as a consequence gradients flows with respect to the Hellinger-Fisher-Rao metrics read
\begin{equation}
\partial_t\rho=-\grad_\H\mathcal F(\rho)
\qquad \leftrightarrow\qquad \partial_t\rho=-\rho F'(\rho).
\label{eq:formula_grad_flow_HFR}
\end{equation}
\subsection{The Kantorovich-Fisher-Rao distance $\d$}
\label{section:FRHKW}
As introduced in \cite{Peyre_inter_15}, we have
\begin{defi}
 The Fisher-Rao-Hellinger-Kantorovich-Wasserstein distance between measures $\rho_0,\rho_1\in \M^+(\Omega)$ is
 \begin{equation}
  \d^2(\rho_0,\rho_1)=\inf\limits_{(\rho,\v,r)\in \mathcal A_\d[\rho_0,\rho_1]}\int_0^1\int_\Omega (|\v_t(x)|^2+|r_t(x)|^2)\rd \rho_t(x)\,\rd t
  \label{eq:def_d}
 \end{equation}
 The admissible set $\mathcal A_\d[\rho_0,\rho_1]$ is the set of curves $ [0,1]\ni t\mapsto(\rho_t,\v_t,r_t)\in \mathcal M^+(\Omega)\times L^2(\Omega,\rd\rho_t)^d\times L^2(\Omega,\rd\rho_t)$ such that $ t\mapsto\rho_t$ is narrowly continuous with endpoints $ \rho_0,\rho_1$ and solves the continuity equation with source 
 $$
 \partial_t\rho_t+\dive(\rho_t\v_t)=\rho_t r_t
 $$
 in the sense of distributions $\mathcal D'((0,1)\times\Omega)$.
  \end{defi}
   { As in Remark~\ref{rmk:functional_space_L2(drho)}
the velocity fields and reaction term implicitly belong to the energy space $ L^2(0,1;L^2(\rd\rho_t))$, so that both products $\rho\v,\rho r$ are well-defined as distributions $\mathcal D'((0,1)\times\Omega)$.}
Comparing \eqref{eq:def_d} with \eqref{eq:benamou_brenier_formula} and \eqref{eq:def_HFR}, this dynamic formulation {\it \`a la Benamou-Brenier} \cite{BB_00} shows that the $\KFR$ distance can be viewed as an inf-convolution of the Monge-Kantorovich and Fisher-Rao distances $\W,\H$.
By the results of \cite{Peyre_inter_15,Peyre_unbalanced_15,LMS_big_15} the infimum in the definition is always a minimum, and the corresponding minimizing curves $t\mapsto\rho_t$ are of course called geodesics.
As shown in \cite{KMV_15,Peyre_inter_15,LMS_big_15} geodesics need not be unique, see also the brief discussion in section~\ref{section:minimizing_scheme}.
Interestingly, there are other possible formulations of the distance in terms of static unbalanced optimal transportation, primal-dual characterizations with relaxed marginals, lifting to probability measures on a cone over $\Omega$, and duality with subsolutions of Hamilton-Jacobi equations.
See also \cite{LMS_big_15,LMS_small_15} as well as \cite{piccoli_rossi_generalized_14} for a related version with mass penalization.

As an immediate consequence of the definition~\ref{eq:def_d} we have a first interplay between the distances $\d,\W,\H$:
\begin{prop}
\label{prop:comparison_d_W_H}
Let $\rho_0,\rho_1\in \M^+_2$ such that $|\rho_0|=|\rho_1|$. Then
$$
\d^2(\rho_0,\rho_1)\leq \W^2(\rho_0,\rho_1).
$$
Similarly for all $\mu_0,\mu_1\in \M^+$ (with possibly different masses) there holds
$$
\d^2(\mu_0,\mu_1)\leq \H^2(\mu_0,\mu_1).
$$
\end{prop}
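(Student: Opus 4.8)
The plan is to establish both inequalities by the same elementary device: testing the infimum \eqref{eq:def_d} defining $\d^2$ against a carefully chosen competitor curve. The key observation is that the admissible set $\mathcal A_\d[\rho_0,\rho_1]$ is \emph{richer} than those appearing in the Benamou-Brenier formula \eqref{eq:benamou_brenier_formula} and in the Fisher-Rao formula \eqref{eq:def_HFR}, in the following precise sense: every curve driven by advection alone (with vanishing reaction $r\equiv 0$) and every curve driven by reaction alone (with vanishing velocity $\v\equiv 0$) is an admissible competitor for $\d$. Since $\d^2$ is an infimum over $\mathcal A_\d$, it suffices to evaluate the KFR action on each of these two sub-families. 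This is nothing but the general principle that an inf-convolution is dominated by each of its ingredients.

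For the first inequality I would fix $\rho_0,\rho_1\in\M^+_2$ with $|\rho_0|=|\rho_1|$ and take an optimal Benamou-Brenier curve $(\rho_t,\v_t)\in\mathcal A_\W[\rho_0,\rho_1]$, so that $\partial_t\rho_t+\dive(\rho_t\v_t)=0$ with endpoints $\rho_0,\rho_1$ and $\int_0^1\int_\Omega|\v_t|^2\rd\rho_t\rd t=\W^2(\rho_0,\rho_1)$ by Theorem~\ref{theo:benamou_brenier}. Setting $r_t:=0$, the triple $(\rho_t,\v_t,0)$ solves $\partial_t\rho_t+\dive(\rho_t\v_t)=\rho_t\cdot 0$, shares the prescribed narrowly continuous endpoints, and therefore lies in $\mathcal A_\d[\rho_0,\rho_1]$. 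Evaluating \eqref{eq:def_d} on it gives
\[
\d^2(\rho_0,\rho_1)\leq\int_0^1\int_\Omega\big(|\v_t|^2+0\big)\rd\rho_t\rd t=\W^2(\rho_0,\rho_1).
\]

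For the second inequality I would fix $\mu_0,\mu_1\in\M^+$ (possibly of different masses) and take a curve $(\rho_t,r_t)\in\mathcal A_\H[\mu_0,\mu_1]$ optimal for the Fisher-Rao action, so that $\partial_t\rho_t=\rho_t r_t$ with endpoints $\mu_0,\mu_1$ and $\int_0^1\int_\Omega|r_t|^2\rd\rho_t\rd t=\H^2(\mu_0,\mu_1)$. This time set $\v_t:=0$: the triple $(\rho_t,0,r_t)$ solves $\partial_t\rho_t+\dive(\rho_t\cdot 0)=\partial_t\rho_t=\rho_t r_t$ and hence belongs to $\mathcal A_\d[\mu_0,\mu_1]$, whence $\d^2(\mu_0,\mu_1)\leq\int_0^1\int_\Omega|r_t|^2\rd\rho_t\rd t=\H^2(\mu_0,\mu_1)$. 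I do not expect any genuine obstacle here; the only points deserving a word of care are the purely formal checks that each candidate curve satisfies the continuity equation with source and attains the right endpoints, and the remark that the equal-mass constraint $|\rho_0|=|\rho_1|$ is needed in the first inequality merely to guarantee that $\mathcal A_\W[\rho_0,\rho_1]$ is nonempty (a pure-advection curve conserves mass), while no mass restriction is required in the second inequality since pure reaction can change the total mass.
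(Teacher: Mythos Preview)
Your proposal is correct and matches the paper's own proof essentially verbatim: test the infimum defining $\d^2$ against the optimal Monge-Kantorovich geodesic with $r\equiv 0$ for the first inequality, and against the Fisher-Rao geodesic with $\v\equiv 0$ for the second. Your added remarks on why the equal-mass constraint is needed only in the first case are accurate and in the same spirit.
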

\begin{proof}
If $|\rho_0|=|\rho_1|$ then the optimal Monge-Kantorovich geodesics $\partial_t\rho_t+\dive(\rho_t\v_t)=0$ from $\rho_0$ to $\rho_1$ gives an admissible path in \eqref{eq:def_d} with $r\equiv 0$ and cost exactly $\W^2(\rho_0,\rho_1)$. Likewise for arbitrary measures $\mu_0,\mu_1$ one can follow the Fisher-Rao geodesics $\partial_r\rho_t=\rho_t r_t$, which gives an admissible path with $\v\equiv 0$ and cost $\H^2(\mu_0,\mu_1)$.
\end{proof}
\begin{prop}
\label{prop:coupling=uncoupling_d}
The definition \eqref{eq:def_d} of the $\KFR$ distance can be restricted to the subclass of admissible paths $(\v_t,r_t)$ such that $\v_t=\nabla r_t$. 
\end{prop}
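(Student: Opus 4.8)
The plan is to show that along any minimizer of \eqref{eq:def_d} the pair $(\v_t,r_t)$ can be chosen with minimal $L^2(\rd\rho_t)$-cost at each fixed time, and that this minimal-cost representative is automatically of the form $\v_t=\nabla r_t$. Since the curve $t\mapsto\rho_t$ is left untouched by such a replacement, neither the admissibility nor the value of the action changes, which yields the claim. First I would freeze the time variable: given an admissible triple $(\rho,\v,r)\in\mathcal A_\d[\rho_0,\rho_1]$, the action in \eqref{eq:def_d} is the time integral of the pointwise functional $\int_\Omega(|\v_t|^2+|r_t|^2)\rd\rho_t$, and for a.e. fixed $t$, writing $\rho:=\rho_t$ and $\zeta:=\partial_t\rho_t$, the continuity equation with source couples $(\v_t,r_t)$ to $\rho_t$ only through the single linear constraint
\[
-\dive(\rho\v)+\rho r=\zeta
\quad\text{in }\mathcal D'(\Omega).
\]
Replacing $(\v_t,r_t)$ by any minimizer of $\int_\Omega(|\v|^2+|r|^2)\rd\rho$ under this constraint produces the same $\partial_t\rho_t$, hence keeps the triple admissible while not increasing the action. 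It therefore suffices to solve the static problem of minimizing the norm of $(\v,r)$ in the Hilbert space $H:=L^2(\rd\rho)^d\times L^2(\rd\rho)$ over the affine subspace cut out by the constraint above.

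The minimizer is the unique element of this affine subspace orthogonal to its direction space, i.e. orthogonal to the kernel $K:=\{(\delta\v,\delta r)\in H:\ -\dive(\rho\delta\v)+\rho\delta r=0\}$. The key computation is that a scalar potential does exactly this job. Indeed, testing the homogeneous constraint against a smooth function $\phi$ and integrating by parts gives, for every $(\delta\v,\delta r)\in K$,
\[
0=\int_\Omega\phi\big(-\dive(\rho\delta\v)+\rho\delta r\big)
=\int_\Omega\nabla\phi\cdot\delta\v\,\rd\rho+\int_\Omega\phi\,\delta r\,\rd\rho
=\big\langle(\nabla\phi,\phi),(\delta\v,\delta r)\big\rangle_{H},
\]
so that the pair $(\v,r)=(\nabla\phi,\phi)$ is orthogonal to $K$ for every $\phi$. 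Consequently, as soon as $\zeta$ can be represented as $\zeta=-\dive(\rho\nabla\phi)+\rho\phi$ for some $\phi$, this $(\nabla\phi,\phi)$ is exactly the minimal-norm representative, and it satisfies $\v=\nabla\phi=\nabla r$, which is the desired coupling.

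It then remains to produce the potential $\phi$, which is the only genuinely analytic point. This amounts to solving the linear elliptic equation
\[
\rho\phi-\dive(\rho\nabla\phi)=\zeta
\]
with no-flux boundary conditions on $\partial\Omega$. For smooth, strictly positive densities $\rho$ this is a coercive problem: the associated bilinear form $\int_\Omega(\rho\,\phi\psi+\rho\,\nabla\phi\cdot\nabla\psi)$ is continuous and coercive on the weighted space $\{\phi:\ \int_\Omega\rho(\phi^2+|\nabla\phi|^2)<\infty\}$, so Lax--Milgram yields a unique solution. I expect the main obstacle to lie precisely here: for degenerate densities (where $\rho$ vanishes, or on an unbounded $\Omega$) the weighted operator loses uniform ellipticity and the argument must be run by approximation; moreover one must check that the pointwise-in-time selection can be performed measurably, so that the reconstructed field $(\v_t,r_t)=(\nabla\phi_t,\phi_t)$ is genuinely admissible. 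For the smooth positive regime invoked in Section~\ref{section:Wasserstein_distance}, however, the Lax--Milgram solvability closes the argument.
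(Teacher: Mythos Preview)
Your proposal is correct and reaches the same conclusion as the paper, but the route is genuinely different. The paper's argument works by perturbation of a minimizing geodesic $(\rho_t,\v_t,r_t)$: first one perturbs $\v_t$ by divergence-free fields $\mathbf w/\rho$ (leaving the continuity equation intact), and optimality forces $\v_t=\nabla u_t$ to be potential; then one perturbs simultaneously $\v_t\to\v_t+\eps\nabla\phi$ and $r_t\to r_t+\eps s_t$ with $\rho_t s_t=\dive(\rho_t\nabla\phi)$, and optimality yields $\dive(\rho_t\nabla u_t)=\dive(\rho_t\nabla r_t)$, so that $(\rho_t,\nabla r_t,r_t)$ is admissible with the same cost. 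Your approach instead freezes time, casts the problem as an orthogonal projection in the Hilbert space $L^2(\rd\rho)^d\times L^2(\rd\rho)$, observes via integration by parts that pairs $(\nabla\phi,\phi)$ are orthogonal to the kernel of the linear constraint, and then invokes Lax--Milgram to produce the potential $\phi$.

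The trade-off is this: your projection argument is conceptually cleaner and makes the underlying Hilbert geometry transparent, but it requires solving the weighted elliptic equation $\rho\phi-\dive(\rho\nabla\phi)=\zeta$ as an extra step. The paper's two-perturbation argument avoids this by working entirely with the data already at hand (the existing representative $(\v_t,r_t)$), never needing to \emph{produce} a potential from scratch; the price is that it is slightly more ad hoc and requires first isolating that $\v_t$ is a gradient before coupling it to $r_t$. Both proofs are expressly formal (smooth $\rho>0$), and you correctly flag the measurable-selection and degeneracy issues as the genuine obstacles to making either one fully rigorous.
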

\begin{proof}
By \cite[thm. 2.1]{Peyre_inter_15} there exists a minimizing curve $(\rho_t,\v_t,r_t)$ in \eqref{eq:def_d}, which by definition is a $\KFR$-geodesic between $\rho_0,\rho_1$ (we also refer to \cite[thm. 6]{KMV_15} and \cite{LMS_small_15} for the existence of geodesics).
Here we stay at the formal level and assume that $\rho,\v,r$ are smooth with $\rho>0$ everywhere.

Consider first an arbitrary smooth vector-field $\mathbf w$ such that $\dive \mathbf{w}_t=0$ for all $t\in [0,1]$, and let $\v^{\eps}:=\v+\eps\frac{\mathbf{w}}{\rho}$.
Then $\dive(\rho\mathbf{v}^\eps)=\dive(\rho\v)+0$ and the triplet $(\rho_t,\mathbf v^\eps_t,r_t)$ is an admissible competitor in \eqref{eq:def_d}. Writing the optimality condition we compute
\begin{align*}
0 & =\left.\frac{d}{d\eps}\left(\frac{1}{2}\int_0^1\int_\Omega (|\mathbf{v}^\eps_t(x)|^2+ |r_t(x)|^2)\rd \rho_t(x)\,\rd t\right)\right|_{\eps=0} \\
& = \int_0^1\int_\Omega \v_t(x) \cdot \frac{\mathbf{w}_t(x)}{\rho_t(x)}\rd \rho_t(x)\,\rd t
=
\int_0^1\int_\Omega\v_t(x)\cdot \mathbf{w}_t(x)\,\rd x\rd t.
\end{align*}
This $L^2$ orthogonality with all divergence-free vector fields classically implies that $\v_t$ is potential for all times, i-e $\v_t=\nabla u_t$ for some $u_t$.

Fix now any smooth $\phi\in\mathcal{C}^\infty_c((0,1)\times \Omega)$, and define $\tilde\v^\eps_t:=\v_t+\eps\nabla\phi_t=\nabla(u_t+\eps\phi_t)$. Defining $s_t$ by $\rho_t s_t=\dive(\rho_t\nabla\phi_t)$ and $\tilde r_t^\eps:=r_t+\eps s_t$ it is easy to check that $(\rho_t,\tilde\v_t^\eps,\tilde r_t^\eps)$ solves the continuity equation, and this triplet is again an admissible competitor in \eqref{eq:def_d}. Writing the optimality condition we get now
\begin{multline*}
0  =\left.\frac{d}{d\eps}\left(\frac{1}{2}\int_0^1\int_\Omega (|\tilde \v^\eps_t(x)|^2 + |\tilde r^\eps_t(x)|^2)\rd \rho_t(x)\,\rd t\right)\right|_{\eps=0} \\
 = \int_0^1\int_\Omega\Big( \nabla u_t(x)\cdot\nabla\phi_t +  r_t(x)s_t(x)\Big)\rd \rho_t(x)\,\rd t\\
 =\int_0^1\int_\Omega \nabla\Big(u_t-r_t\Big)(x)\cdot\nabla\phi_t\rd \rho_t(x)\,\rd t,
\end{multline*}
 where we used the identity $ r_t s_t\rho_t = r_t\dive(\rho_t\nabla\phi_t)$ to integrate by parts in the last equality.
 As $\phi$ was arbitrary this implies $\dive(\rho_t\nabla u_t)=\dive(\rho_t\nabla r_t)$ and $\|\v_t\|^2_{L^2(\rd\rho_t)}=\|\nabla u_t\|^2_{L^2(\rd\rho_t)}=\|\nabla r_t\|^2_{L^2(\rd\rho_t)}$.
 In particular the triplet $(\rho_t,\nabla r_t,r_t)$ is admissible and has the same cost as the optimal $(\rho_t,\v_t,r_t)$, which concludes the proof.
\end{proof}
As a consequence we have the alternative definition of the $\KFR$ distance as introduced in \cite{KMV_15}, which couples the reaction and velocity:
\begin{theo}
\label{theo:KFR_coupled_uncoupled}
 For all $\rho_0,\rho_1\in \M^+(\Omega)$ there holds
 \begin{equation}
  \d^2(\rho_0,\rho_1)=\inf\limits_{(\rho,u)\in \tilde{\mathcal A}_\d[\rho_0,\rho_1]}\int_0^1\int_\Omega (|\nabla u_t(x)|^2+|u_t(x)|^2)\rd \rho_t(x)\,\rd t,
  \label{eq:def_d_coupled}
 \end{equation} 
 where $\tilde A_\d[\rho_0,\rho_1]$ is the set of weakly continuous curves $t\mapsto \rho_t\in\mathcal C_w ([0,1];\M^+)$ with endpoints $ \rho_0,\rho_1$ such that
 $$
 \partial_t\rho_t+\dive(\rho_t\nabla u_t)=\rho_t u_t
 $$
 in the sense of distributions $\mathcal D'((0,1)\times\Omega)$.
\end{theo}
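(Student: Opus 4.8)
The plan is to read this off Proposition~\ref{prop:coupling=uncoupling_d} by identifying the coupled formulation \eqref{eq:def_d_coupled} with the restricted class of paths $\v_t=\nabla r_t$ from the uncoupled definition \eqref{eq:def_d}, under the substitution $u_t=r_t$ (so that $\v_t=\nabla r_t=\nabla u_t$). Write $I$ for the infimum in \eqref{eq:def_d} and $J$ for the one in \eqref{eq:def_d_coupled}; I will establish $I\leq J$ and $I\geq J$ separately.

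For $I\leq J$, I would take any admissible couple $(\rho,u)\in\tilde{\mathcal A}_\d[\rho_0,\rho_1]$ and form the triplet $(\rho_t,\v_t,r_t):=(\rho_t,\nabla u_t,u_t)$. Finiteness of the action in \eqref{eq:def_d_coupled} forces $\nabla u_t\in L^2(\rd\rho_t)^d$ and $u_t\in L^2(\rd\rho_t)$, so $(\v,r)$ sits in the correct energy space $L^2(0,1;L^2(\rd\rho_t))$, while the constraint $\partial_t\rho_t+\dive(\rho_t\nabla u_t)=\rho_t u_t$ is precisely the continuity equation with source for this triplet. Hence $(\rho,\v,r)\in\mathcal A_\d[\rho_0,\rho_1]$ and its cost equals $\int_0^1\int_\Omega(|\nabla u_t|^2+|u_t|^2)\rd\rho_t\rd t$; passing to the infimum gives $I\leq J$.

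For the reverse inequality $I\geq J$, I would invoke Proposition~\ref{prop:coupling=uncoupling_d}, which guarantees that $I$ is unchanged when one restricts to triplets with $\v_t=\nabla r_t$. For any such triplet set $u_t:=r_t$; then $\v_t=\nabla u_t$, the source equation becomes $\partial_t\rho_t+\dive(\rho_t\nabla u_t)=\rho_t u_t$, and narrow continuity of $t\mapsto\rho_t$ in particular yields weak continuity, so $(\rho,u)\in\tilde{\mathcal A}_\d[\rho_0,\rho_1]$. Its coupled cost coincides with the restricted cost $\int_0^1\int_\Omega(|\nabla r_t|^2+|r_t|^2)\rd\rho_t\rd t\geq J$, and since the infimum over the restricted class equals $I$ by the Proposition, we conclude $I\geq J$. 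Together with the previous step this yields $I=J$, which is the claim.

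The algebraic identification $u=r$ is immediate, so the real work lies in matching the two functional frameworks. First, Proposition~\ref{prop:coupling=uncoupling_d} was derived only formally for smooth strictly positive densities, so a rigorous version of the step $I\geq J$ must either approximate a general optimal curve by smooth positive ones while propagating the potential structure $\v=\nabla r$ to the limit, or appeal directly to the existence and Hamiltonian characterization of $\KFR$-geodesics from \cite{Peyre_inter_15,LMS_small_15,KMV_15}. Second, one must reconcile the narrow continuity built into \eqref{eq:def_d} with the weak continuity $\mathcal C_w$ used in \eqref{eq:def_d_coupled}; this is harmless because finite action together with the continuity equation with source forces metric absolute continuity of $t\mapsto\rho_t$, hence narrow and a fortiori weak continuity, so the two admissible classes genuinely coincide once the mass is locally controlled. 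This matching of topologies and the formal-to-rigorous passage are the points I expect to require the most care.
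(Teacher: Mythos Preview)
Your proposal is correct and follows exactly the approach intended by the paper, which does not give a separate proof of the theorem but presents it as an immediate consequence of Proposition~\ref{prop:coupling=uncoupling_d}. Your two-inequality argument spells out precisely this deduction, and your caveats about the formal nature of the Proposition and the narrow/weak continuity matching are apt---the paper itself stays at the formal level (``we stay at the formal level and assume that $\rho,\v,r$ are smooth with $\rho>0$ everywhere'') and defers the rigorous construction of geodesics to \cite{Peyre_inter_15,LMS_small_15,KMV_15}.
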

{ The potentials $u$ belong now implicitly to the energy space $L^2(0,1;H^1(\rd\rho_t))$ with obviously $\|u_t\|^2_{H^1(\rd\rho)}:=\int_\Omega( |\nabla u_t|^2+ |u_t|^2)\rd\rho_t$, and both products $\rho_t\nabla u_t,\rho_t u_t$ define distributions as before.
Note that Theorem~\ref{theo:KFR_coupled_uncoupled} shows that the $\KFR$ distance constructed in \cite{Peyre_inter_15}, based on the uncoupled $(\v,r)$ formulation, is indeed the same as that in \cite{KMV_15}, modeled on the $(\nabla u,u)$ potential framework.

In order to define now the Riemannian structure on $(\M^+,\d)$ inherited from the Lagrangian minimization, 
we endow the tangent plane}
{ 
$$
T_\rho\mathcal{M}^+_\d=\Big\{\partial_t\rho = - \dive(\rho v)+ \rho r\quad \mbox{evaluated at $t=0$}\Big\}
$$
with the Riemannian metrics
$$
\|\partial_t\rho\|^2_{T_\rho\M^+_{\d}}: =\inf\left\{\|\v\|^2_{L^2(\rd\rho)}+\|r \|^2_{L^2(\rd\rho)} :\quad \partial_t\rho=-\dive(\rho\v) + \rho r \right\}.
$$
Then Theorem~\ref{theo:KFR_coupled_uncoupled} also allows to construct the one-to-one correspondence between tangent vectors $\partial_t\rho$ and potentials $u$, such that
$$
\|\partial_t\rho\|^2_{T_\rho\M^+_{\d}} =\|u\|^2_{H^1(\rd\rho)}\quad \mbox{with the identification }\partial_t\rho=-\dive(\rho\nabla u)+ \rho u.
$$
}
{ 
With this one-to-one correspondence at hand,} metric gradients $\grad_\d\mathcal F$ can be computed by the chain rule as earlier:
If $\partial_t\rho_t+\dive(\rho_t\nabla u_t)=\rho_t u_t$ is a { smooth} curve passing through $\rho_t(0)=\rho$ with arbitrary initial velocity $\zeta=\partial_t\rho_t(0)=-\dive(\rho \nabla u)+\rho u$ then for functionals $\mathcal{F}(\rho)=\int_\Omega F(\rho(x),x)\rd x$ we have
\begin{multline*}
\left<\grad_\d \mathcal F(\rho),\zeta\right>_{T_\rho\M^+_\d}  =\left.\left.\frac{d}{dt}\mathcal F (\rho_t)\right|_{t=0} = \frac{d}{dt}\left(\int_\Omega F(\rho_t(x),x)\rd x\right)\right|_{t=0}\\
 =\int_{\Omega}F'(\rho)\times \left\{-\dive(\rho\nabla u)+\rho u\right\}\\
 = \int_\Omega\left\{\nabla F'(\rho)\cdot \nabla u+F'(\rho) u\right\}\rd\rho
 =\left< F'(\rho),u \right>_{H^1(\rd\rho)},
\end{multline*}
where $F'(\rho)=\frac{\delta F}{\delta\rho}$ as before.
This shows that
\begin{equation*}
 \grad_\d\mathcal F(\rho)=- \dive\left(\rho\nabla F'(\rho)\right) + \rho F'(\rho)
 \label{eq:formula_gradient_d}
\end{equation*}
through the canonical $H^1(\rd\rho)$ action in the tangent plane.
In particular $\d$ gradient flows read
\begin{equation}
\partial_t\rho =-\grad_\d \mathcal F(\rho)
\qquad \leftrightarrow \qquad
\partial_t\rho=\dive(\rho \nabla F'(\rho))-\rho F'(\rho),
\label{eq:formula_grad_flow_FRHKW}
\end{equation}
which should be compared with \eqref{eq:formula_grad_flow_W} and \eqref{eq:formula_grad_flow_HFR}.
%
\section{Infinitesimal uncoupling of the inf-convolution}
\label{section:uncoupling_inf-sup_convolution}
Let us first summarize the previous informal discussion on each of the three metrics: the quadratic Monge-Kantorovich distance is modeled on the homogeneous $\dot H^1(\rd\rho)$ space, the Fisher-Rao distance is based on $L^2(\rd\rho)$, and the $\KFR$ metrics is constructed on the full $H^1 (\rd \rho)$ structure.
Each of these Riemannian structures are defined via identification of tangent vectors as
\begin{equation*}
\begin{array}{llr}
 \W:\quad & \|\partial_t\rho\|^2_{T_\rho\M^+_\W}=\| \nabla p\|^2_{L^2(\rd\rho)}=\int_\Omega |\nabla p|^2\rd\rho, & \partial_t\rho +\dive(\rho\nabla p)=0,\\
 \H:\quad & \|\partial_t\rho\|^2_{T_\rho\M^+_\H}=\| r\|^2_{ L^2(\rd\rho)}=\int_\Omega |r|^2\rd\rho, & \partial_t\rho =\rho r,\\
\d:\quad & \|\partial_t\rho\|^2_{T_\rho\M^+_{\d}}=\| u\|^2_{ H^1(\rd\rho)}=\int_\Omega (|\nabla u|^2+u^2)\rd\rho,\quad & \partial_t\rho +\dive(\rho\nabla u)=\rho u.
\end{array}
\end{equation*}
{ 
Given a tangent vector $\zeta^u_\KFR=-\dive(\rho\nabla u)+\rho u\in T_\rho\mathcal M^+_{\KFR}$ we can naturally define a Monge-Kantorovich tangent vector $\zeta^u_\MK:=-\dive(\rho\nabla u)\in T_\rho \mathcal M^+_\MK$, and a Fisher-Rao tangent vector $\zeta^u_\FR:=\rho u\in T_\FR \mathcal M^+_\FR$.
Observing that by construction
\begin{equation}
\label{eq:orthogonality_norms}
\|\zeta^u_\KFR\|^2_{T_\rho\mathcal M^+_\KFR}= \|\zeta^u_\MK\|^2_{T_\rho\mathcal M^+_\MK}+\|\zeta^u_\FR\|^2_{T_\rho\mathcal M^+_\FR},
\end{equation}
this suggests to view the tangent plane as the orthogonal sum
\begin{equation}
T_{\rho}\M^+_{\d} = T_{\rho}\M^+_\W \oplus^\perp T_{\rho}\M^+_\H,\qquad
\zeta^u_\KFR=\zeta^u_\MK+\zeta^u_\FR.
\label{eq:orthogonal_decomposition}
\end{equation}
More precisely, let us define an equivalence relation $\thicksim$ on $T_{\rho}\M^+_\W \oplus T_{\rho}\M^+_\H$ by $(\v,r) \thicksim (\tilde{\v} , \tilde r) $ if 
$  -\dive \left(\rho \v\right)+\rho  r= -\dive \left(\rho \tilde{\v} \right)+ \rho  \tilde r $.
Each $(\v, r)$ lies in an equivalence class $\left[ (\nabla u,u)\right] = [u]$ on which we  define the norm 
$$
\| [u] \|^2_{\thicksim}  = \| \nabla u \|^2_{ L^2(\rd\rho)}  +\| u\|^2_{ L^2(\rd\rho)} = \|\zeta^u_\MK\|^2_{T_\rho\mathcal M^+_\MK}+\|\zeta^u_\FR\|^2_{T_\rho\mathcal M^+_\FR}. 
$$
Then the orthogonality in \eqref{eq:orthogonality_norms}  should be understood as
$$
\Big( T_{\rho}\M^+_{\d},\| \cdot \|^2_{T_\rho\M^+_{\d}} \Big) = \Big( \left(  T_{\rho}\M^+_\W \oplus  T_{\rho}\M^+_\H \right) / \thicksim, \| \cdot \|^2_{\thicksim} \Big).
$$
Thus infinitesimally $\d^2\approx \W^2+\H^2$, and this will motivate later on replacing the approximation ``by hypotenuses'' by an approximation ``by legs'' in the JKO scheme - see section~\ref{section:minimizing_scheme} and in particular \eqref{eq:step_wasserstein}\eqref{eq:step_hellinger_fisher_rao}.
The orthogonality between the transport/$\MK$ and reaction/$\FR$ processes }also yields a natural strategy to send a measure $\rho_0$ to another $\rho_1$: one can send first $\rho_0$ to the renormalized $\tilde{\rho}_0:=\frac{|\rho_0|}{|\rho_1|}\rho_1$ by pure Monge-Kantorovich transport (which is possible since $|\tilde\rho_0|=|\rho_0|$), and then send $\tilde\rho_0$ to $\rho_1$ by pure Fisher-Rao reaction.
This amounts to following separately and successively the two orthogonal directions in the decomposition \eqref{eq:orthogonal_decomposition}.

An immediate consequence of this observation is
\begin{prop}
\label{prop:uncoupling_H2<2W2+2H2}
For arbitrary measures $\rho_0,\rho_1\in \M^+$ let $\tilde{\rho}_0:=\frac{|\rho_0|}{|\rho_1|}\rho_1$. Then
\begin{equation}
 \label{eq:estimate_d2<W2+d2}
 \d^2(\rho_0,\rho_1)\leq 2\big(\W^2(\rho_0,\tilde{\rho}_0)+\H^2(\tilde{\rho}_0,\rho_1)\big).
\end{equation}
\end{prop}
\begin{proof}
It suffices to follow first a pure Monge-Kantorovich geodesics ($r\equiv 0$) from $\rho_0$ to $\tilde{\rho}_0$ scaled in time $t\in [0,1/2]$, and then a pure Fisher-Rao geodesic ($\v \equiv 0$) from $\tilde{\rho}_0$ to $\rho_1$ scaled in time $t\in [1/2,1]$.
Because of the rescaling in time each of these half-paths have an extra factor $2$, amounting to a total cost of $2\W^2(\rho_0,\tilde \rho_0)+2\H^2(\tilde\rho_0,\rho_1)$ for this admissible path.
The result then follows from the definition \eqref{eq:def_d} of $\d^2$ as an infimum over all paths.
\end{proof}
Note that estimate \eqref{eq:estimate_d2<W2+d2} holds for any arbitrary measure $\rho_0,\rho_1\in \M^+$, but has a multiplicative factor $2$ which { in view of \eqref{eq:orthogonality_norms}\eqref{eq:orthogonal_decomposition} is certainly not optimal at short range $\KFR(\rho_0,\rho_1)\ll 1$}.
Consider now two very close measures $\d(\rho_0,\rho_1)\ll 1$. 
Then the above transformation from $\rho_0$ to $\rho_1$ can essentially be considered as occurring infinitesimally in the tangent plane $T_{\rho}\M^+_{\d} = T_{\rho}\M^+_\W \oplus^\perp T_{\rho}\M^+_\H$. Roughly speaking, this means that the two transport and reaction processes from $\rho_0$ to $\tilde{\rho}_0$ and from $\tilde{\rho_0}$ to $\rho_1$ in the previous proof can be considered as occurring \emph{simultaneously and independently} at the infinitesimal level.
Thus the factor $2$ in \eqref{eq:estimate_d2<W2+d2} is unnecessary, and one should expect in fact
\begin{equation}
\d^2(\rho_0,\rho_1)\approx \W^2(\rho_0,\tilde \rho_0)+\H^2(\tilde\rho_0,\rho_1)
\label{eq:claim_uncoupling_inf_sup_convolution}
\end{equation}
for { nearby} measures $\d(\rho_0,\rho_1)\ll 1$.
This can be made rigorous at least for one-point particles
$$
\rho_0=k_0\delta_{x_0},\qquad \rho_1=k_1\delta_{x_1}
$$
at close distance, i-e $|x_1-x_0|\ll 1$ and $k_1\approx k_0$.
{ In this setting it was shown in \cite[Section 3.3]{KMV_15} and proved rigorously \cite[thm. 4.1]{Peyre_inter_15} and \cite[thm. 3.1]{LMS_small_15} that the geodesics $\rho_t$ from $\rho_0$ to $\rho_1$ is a moving one-point mass of the form $\rho_t=k_t\delta_{x_t}$ for some suitable curve $t\mapsto(x_t,k_t)\in \Omega\times\R^+$.} 
\begin{rmk}
\label{rmk:threshold_pi}
The one-point ansatz $\rho_t=k_t\delta_{x_t}$ is in fact correct not only for short distances $|x_1-x_0|\ll1$, but also as long as $|x_1-x_0|<\pi$. Past this threshold $|x_1-x_0|=\pi$ it is more efficient to virtually displace mass from $x_0$ to $x_1$ by pure reaction, i-e by killing mass at $x_0$ while simultaneously creating some at $x_1$.
\end{rmk}
In the continuity equation $\partial_t\rho_t+\dive(\rho_t\v_t)=\rho_t r_t$ the advection moves particles around according to $\frac{d}{dt}x_t=\v_t$ and the reaction reads $\frac{d}{dt}k_t=k_tr_t$, each with infinitesimal cost $k_t|\v_t|^2$ and $k_t|r_t|^2$. 
The optimal $(\v_t,r_t)$ for the one-point ansatz $\rho_t=k_t\delta_{x_t}$ can be computed explicitly by looking at the coupled formulation \eqref{eq:def_d_coupled} with $\v_t=\nabla u_t,r_t=u_t$, and optimizing the cost with respect to admissible potentials $u_t$.
Omitting the details (see again \cite{Peyre_unbalanced_15,Peyre_inter_15,KMV_15,LMS_big_15,LMS_small_15}), the optimal cost can be computed explicitly as
\begin{equation}
\d^2(\rho_0,\rho_1)=4\left(k_0+k_1-2\sqrt{k_0k_1}\cos\left(\frac{|x_1-x_2|}{2}\right)\right)
\qquad \mbox{for }|x_1-x_0|<\pi.
\label{eq:formula_d_one_point_particles}
\end{equation}
\begin{rmk}
{ 
It was shown in \cite{Peyre_unbalanced_15,LMS_big_15,LMS_small_15} that the $\KFR$ distance can be recovered by means of a suitable Riemannian submersion 
$(\mathcal P_2(C_\Omega),\W)\to (\M^+(\Omega),\d)$.
Here $\mathrm C_\Omega=\{[x,r]\in \Omega\times\R^+\}/\thicksim$ is a cone overlying $\Omega$ obtained by identification of all the tips $[x,0]$ into a single point $\diamond\in C_\Omega$, and is suitably endowed with the cone distance $d_C^2([x_0,r_0],[x_1,r_1])=r_0^2+r_1^2-2r_0r_1\cos(|x_1-x_0|/2 \wedge \pi)$. 
In formula \eqref{eq:formula_d_one_point_particles} one sees in fact, up to the normalizing factor $4$, the natural Monge-Kantorovich distance $\d^2\left(\delta_{[x_0,{k_0}]},\delta_{[x_1,{k_1}]}\right)= \W^2\left(\delta_{[x_0,\sqrt{k_0}]},\delta_{[x_1,\sqrt{k_1}]}\right)=d_C^2([x_0,\sqrt{k_0}],[x_1,\sqrt{k_1}])$ between unit Dirac masses in the overlying space $\mathcal P_2(C_\Omega)$.
We refrain from discussing further the Riemannian submersion and the corresponding static formulations of $\KFR$, and refer again to \cite{Peyre_unbalanced_15,LMS_big_15,LMS_small_15,TGV} for rigorous statements.
}
\end{rmk}

In this setting and with the previous notation $\tilde{\rho}_0=\frac{|\rho_0|}{|\rho_1|}\rho_1=k_0\delta_{x_1}$ we have here
$$
\W^2(\rho_0,\tilde\rho_0)=\W^2(k_0\delta_{x_0},k_0\delta_{x_1})=k_0|x_1-x_0|^2,
$$
and by \eqref{eq:def_HFR}
$$
\H^2(\tilde{\rho}_0,\rho_1)=4\int_\Omega\left|\sqrt{\frac{\rd \rho_1}{\rd\delta_{x_1}}}-\sqrt{\frac{\rd \tilde\rho_0}{\rd\delta_{x_1}}}\right|^2\rd\delta_{x_1}=4\left|\sqrt{k_1}-\sqrt{k_0}\right|^2.
$$
Taylor-expanding \eqref{eq:formula_d_one_point_particles} at order two in $|x_1-x_0|,|\sqrt{k_1}-\sqrt{k_0}|\ll 1$ gives
\begin{multline}
\d^2(\rho_0,\rho_1)  =k_0 |x_1-x_0|^2  + 4 |\sqrt{k_1}-\sqrt{k_0}|^2  +\mathcal O\Big(|x_1-x_0|^2|\sqrt{k_1}-\sqrt{k_0}|\Big)
\label{eq:explicit_computation_uncoupling_inf-sup_one-point}
\\
= \W^2(\rho_0,\tilde{\rho}_0)  + \H^2(\tilde\rho_0,\rho_1)  +\mbox{lower order},
\end{multline}
which shows that our claim \eqref{eq:claim_uncoupling_inf_sup_convolution} holds true at least for one-point particles and at order one in the squared distances.
\begin{rmk}
Due to $4|\sqrt{k_1}-\sqrt{k_0}|^2=\FR^2(\tilde\rho_0,\rho_1)\ll 1$ we have $k_1=k_0+\mathcal O(|\sqrt{k_1}-\sqrt{k_0}|)$.
The previous expression can therefore be rewritten as 
$$
\d^2(\rho_0,\rho_1)  =\frac{k_0+k_1}2 |x_1-x_0|^2  + 4 |\sqrt{k_1}-\sqrt{k_0}|^2  +\mbox{lower order}
$$
and the apparent loss of symmetry in $k_0,k_1$ in \eqref{eq:explicit_computation_uncoupling_inf-sup_one-point} is thus purely artificial.
\end{rmk}

\begin{rmk}
An interesting question would be to determine how much information on the transport/reaction coupling is encoded in the remainder,
{ and this is also related to the curvature of the $\KFR$ space.}
\end{rmk}
Justifying and/or quantifying the above discussion and \eqref{eq:claim_uncoupling_inf_sup_convolution} for general measures with $\d(\rho_0,\rho_1)\ll 1$ is an interesting question left for future work. 
One can think that the superposition principle should apply: viewing any measure as a continuum of one-point Lagrangian particles and taking for granted that the infinitesimal uncoupling holds for single particles, it seems natural that the result should also hold for all measures.

\section{Minimizing scheme}
\label{section:minimizing_scheme}
We turn now our attention to gradient-flows
\begin{equation}
\partial_t\rho=-\grad_\d \mathcal F(\rho) 
\label{eq:grad_flow_d}
\end{equation}
of functionals
$$
\mathcal{F}(\rho)=
\left\{
\begin{array}{ll}
\int_{\Omega}\big\{U(\rho)+\Psi(x)\rho+\frac 12 \rho K\star \rho\big\}\rd x \quad & \mbox{if }\rd\rho\ll\rd x\\
\infty & \mbox{otherwise}
\end{array}
\right.
$$
with respect to the $\KFR$ distance.
Without further mention we implicitly restrict to absolutely continuous measures (with respect to Lebesgue), and still denote their Radon-Nikodym derivatives $\rho=\frac {\rd\rho}{\rd x}$ with a slight abuse of notations.
According to \eqref{eq:formula_grad_flow_FRHKW} this corresponds to PDEs of the form
\begin{equation}
\partial_t \rho =\dive(\rho \nabla(U'(\rho)+\Psi+K\star \rho))-\rho(U'(\rho)+\Psi+K\star \rho),
\label{eq:PDE}
\end{equation}
appearing for example in the tumor growth model studied in \cite{perthame_tumor_14}.

The natural minimizing movement for \eqref{eq:grad_flow_d} should be
\begin{equation}
\rho^{n+1}\in \underset{\rho\in\M^+}{\operatorname{Argmin}}\left\{\frac{1}{2\tau}\d^2 (\rho,\rho^n)+\mathcal F(\rho)\right\}
\label{eq:minimizing_scheme_d}
\end{equation}
for some small time step $\tau>0$.
In order to obtain an Euler-Lagrange equation, a classical and natural strategy would be to consider perturbations $\eps\mapsto\rho_\eps$ of the minimizer $\rho_\eps(0)=\rho^{n+1}$ starting with velocity $\partial_\eps\rho_\eps(0)=-\dive(\rho^{n+1}\nabla\phi)+\rho^{n+1}\phi$ for any arbitrary smooth $\phi$, corresponding to choosing all possible directions of perturbation in the tangent plane $T_{\rho^{n+1}}\M^+_{\d}$.
Writing down the optimality criterion $\left.\frac{d}{d\eps}\left(\frac{1}{2\tau}\d^2(\rho_\eps,\rho^n)+\mathcal F(\rho_\eps)\right)\right|_{\eps=0}=0$ should then give the sought Euler-Lagrange equation. 
In order to exploit this, one should in particular know how to differentiate the squared distance $\rho\mapsto \d^2(\rho,\mu)$ with respect to such perturbations $\rho_\eps$ of the minimizer.
At this stage the theory does not provide yet the necessary tools, even though what the formula should be is quite clear: For any reasonable smooth Riemannian manifold and curve $x(t)$ with $x(0)=x$ we have
$$
\left.\frac{d}{dt}\left(\frac{1}{2} d^2(x(t),y)\right)\right|_{t=0}=\left<x'(0),\zeta\right>_{T_x\mathcal M},
$$
where $\zeta$ is the terminal velocity $y'(1)\in T_x\mathcal M$ of the geodesics from $y$ to $x$. 
Here the $\d$-geodesics $(\mu_s)_{s\in [0,1]}$ from $\mu$ to $\rho$ should solve $\partial_s\mu_s+\dive(\mu_s\nabla u_s)=\mu_s u_s$ and the terminal velocity $\zeta=\partial_s\mu(1)\in T_\rho \M^+_{\KFR}$ should be identified with some potential $u=u_s(1)\in H^1(\rd\rho)$ through $\zeta=-\dive(\rho\nabla u)+\rho u$, see section~\ref{section:FRHKW}.
We should therefore expect
$$
\left.\frac{d}{d\eps}\left(\frac{1}{2}\d^2(\rho_\eps,\mu)\right)\right|_{\eps=0}=\left<\partial_\eps\rho(0),\zeta\right>_{T_\rho\M^+_{\d}}=\int_\Omega (\nabla \phi\cdot \nabla u+ \phi u)\rd\rho.
$$
However, this can raise delicate technical issues at the cut-locus, where geodesics cease to be minimizing and prevent any differentiability of the squared distance.
%
Indeed, it was shown in \cite[section 5.2]{LMS_small_15}, \cite[thm. 4.1]{Peyre_inter_15}, and \cite[section 3.5]{KMV_15} that such cut-loci do exist for $\Omega=\R^d$, and even that the set of non-unique geodesics generically spans an infinite-dimensional convex set.
This is related to the threshold $|x_1-x_0|=\pi$ for one-point measures, see Remark~\ref{rmk:threshold_pi}.
In other words the squared distance may very well not be differentiable, even in the case of the simplest geometry $\Omega=\R^d$ of the underlying space.
This is in sharp contrast with classical mass conservative optimal transportation, where the cut-locus in $\mathcal P(X)$ is intimately related to the geometry of the underlying Riemannian manifold $X$ \cite{villani_big}.

In the context of minimizing movements one should expect two successive steps to be extremely close, typically $\d(\rho^{n+1},\rho^n)=\mathcal O(\sqrt{\tau})$ as $\tau\to 0$.
It seems reasonable to hope that geodesics then become unique at short distance, and one might therefore think that the previous cut-locus issue should not arise here for small $\tau>0$.
However, even assuming that we could somehow compute a unique minimizing geodesics $(\rho_s)_{s\in [0,1]}$ from $\rho^n$ to $\rho^{n+1}$ and safely evaluate the terminal velocity $\partial_s\rho(1)=-\dive(\rho^{n+1}\nabla u^{n+1})+\rho^{n+1} u^{n+1}$ at $s=1$ in order to differentiate the squared distance, it would remain to derive a (possibly approximated) relation between the Riemannian point of view and the more classical PDE framework, e.g. by proving an estimate like
$$
\int_{\Omega}(\nabla u^{n+1}\cdot\nabla\phi+u^{n+1}\phi)\rd \rho^{n+1}
\approx
\int_{\Omega}\frac{\rho^{n+1}-\rho^n}{\tau}\phi + \mbox{remainder}.
$$
In this last display we see the interplay between the forward tangent vector $u^{n+1}\in H^1(\rd\rho^{n+1})\leftrightsquigarrow T_{\rho^{n+1}}\M^+_{\d}$, encoding the Riemannian variation from $\rho^n$ to $\rho^{n+1}$, and the standard difference quotient $\frac{\rho^{n+1}-\rho^n}{\tau}\approx\partial_t\rho$.
One should then typically prove that the remainder is quadratic $\mathcal O\left(\d^2(\rho^{n+1},\rho^n)\right)$.
Within the framework of classical optimal transport this is usually done exploiting the explicit representation of the $\W$ metrics in terms of optimal transport maps (or transference plans, or Kantorovich potentials), which are in turn related to some static formulations of the problem. 
See later on section \ref{section:Wasserstein_substep} and in particular the Taylor expansion \eqref{eq:Taylor_expansion_W2} for details, and also remark~\ref{rmk:no_asymptotic_expansion_Hellinger}.
However, and even though static formulations of the $\KFR$ distance have been derived in \cite{LMS_big_15}, the current theory does not provide yet such an asymptotic expansion.

In order to circumvent these technical issues, let us recall from the discussion in section~\ref{section:uncoupling_inf-sup_convolution} that the inf-convolution formally uncouples at short distance.
This strongly suggests replacing $\d^2$ by the approximation $\W^2+\H^2\approx \d^2$, and as a consequence we naturally substitute the direct one-step minimizing scheme \eqref{eq:minimizing_scheme_d} by a sequence of two elementary substeps
$$
\rho^n\overset{ \W^2}{\longrightarrow }\rho^{n+\frac{1}{2}}\overset{\H^2}{\longrightarrow } \rho^{n+1}.
$$
Each of these substeps are pure Monge-Kantorovich/transport and Fisher-Rao/reaction variational steps, respectively and successively
\begin{equation}
\rho^{n+\frac{1}{2}}\in \underset{\rho\in\M_2^+,\,|\rho|=|\rho^n|}{\operatorname{Argmin}}\left\{\frac{1}{2\tau}\W^2(\rho,\rho^n)+\mathcal F(\rho)\right\}
\label{eq:step_wasserstein}
\end{equation}
\begin{equation}
\rho^{n+1}\in \underset{\rho\in\M^+}{\operatorname{Argmin}}\left\{\frac{1}{2\tau}\H^2(\rho,\rho^{n+\frac{1}{2}})+\mathcal F(\rho)\right\}.
\label{eq:step_hellinger_fisher_rao}
\end{equation}
Note that the first Monge-Kantorovich step is mass preserving by construction, while the second will account for mass variations.

The underlying idea is that the scheme follows alternatively the two privileged directions in $T_\rho\M^+_{\d}=T_\rho\M^+_\W\oplus T_\rho\M^+_\H$, corresponding to pure Monge-Kantorovich transport and pure Fisher-Rao reaction respectively. 
Another possible interpretation is that of an operator-splitting method: from \eqref{eq:formula_grad_flow_W}\eqref{eq:formula_grad_HFR}\eqref{eq:formula_grad_flow_FRHKW} we get
\begin{align*}
-\grad_\d\mathcal F(\rho) & =\dive(\rho\nabla(U'(\rho)+\Psi+K\star \rho)) -\rho(U'(\rho)+\Psi+K\star \rho)\\
& =-\grad_{\W}\mathcal F (\rho) -\grad_\H\mathcal F(\rho).
\end{align*}
Viewing the same functional $\mathcal F(\rho)$ through distinct ``differential lenses'' (i-e using respectively the $\W$ and $\H$ differential structures) gives the two transport and reaction terms in the PDE \eqref{eq:PDE}.
Thus it is very natural to split the PDE in two separate transport/reaction operators and treat separately each of them in their own and intrinsic differential framework.
This idea of hybrid variational structures has been successfully applied e.g. in \cite{KK_janossy_09,blanchet_PKS_hybrid_15,blanchet_parabolic_parabolic_PKS_13} for systems of equations where each component is viewed from separate differential perspectives, but not to the splitting of one single equation as it is the case here.
A related splitting scheme was employed in \cite{Agueh_bowles_15} to construct weak solutions of fractional Fokker-Planck equations $\partial_t\rho=\Delta^{2s}\rho+\dive(\rho\nabla\Psi)$, using a Monge-Kantorovich variational scheme in order to handle the transport term. However the discretization of the fractional Laplacian was treated in a non metric setting, the PDE cannot be viewed as the sum of gradient-flows of the same functional for two different ``orthogonal'' metrics, and the approach therein is thus more a technical tool than an intrinsic variational feature.

Another natural consequence of this formal point of view is the following:
From the { orthogonality \eqref{eq:orthogonality_norms} in $T_{\rho}\M^+_{\d}=T_{\rho}\M^+_\W\oplus T_{\rho}\M^+_\H$ we can compute}
\begin{align*}
\mathcal{D}(t) :=-\frac{d}{dt}\mathcal F(\rho(t))=-\|\grad _d\mathcal F\|^2_{T_{\rho}\M^+_{\d}}
 = -\|\grad _\W\mathcal F\|^2_{T_{\rho}\M^+_\W}-\|\grad _\H\mathcal F\|^2_{T_{\rho}\M^+_\H},
\end{align*}
which really means that the total dissipation for the coupled $\d$ metrics is just the sum of the two elementary $\W,\H$ dissipations.
One can of course check this formula by computing $\frac{d}{dt}\mathcal F(\rho_t) $ along solutions of the PDE.
This may be useful at the discrete level, since regularity is essentially related to dissipation.
For example $\lambda$-convexity ensures that the energy is dissipated at a minimum rate, which in turn can be viewed as a quantifiable regularization in the spirit of Br\'ezis-Pazy.
This will be illustrated in Proposition~\ref{prop:EDI}, where we show that one indeed recovers an Energy Dissipation Inequality with respect to $\d$ from the two elementary $\W,\H$ geodesic convexity and dissipation.\\
{ We first collect some general properties of our two-steps $\W/\H$ splitting scheme, which share common features with the intrinsic one-step scheme \eqref{eq:minimizing_scheme_d} {  and only exploit the metric structure regardless of any PDE considerations.}
\begin{lem}[Total-square distance estimate]
 Let $\rho^n,\rho^{n+\frac{1}{2}}$ be recursive solutions of \eqref{eq:step_wasserstein}\eqref{eq:step_hellinger_fisher_rao}.
 Then
 \begin{equation}
\frac{1}{\tau}\sum\limits_{n\geq 0} \d^2(\rho^{n+1},\rho^n)\leq 4\left(\mathcal{F}(\rho^0)-\inf\limits_{\M^+}\mathcal F\right).
 \label{eq:total_square_distance_d}
\end{equation}
\end{lem}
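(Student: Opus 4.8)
The plan is to exploit the variational characterization of each substep to bound $\W^2$ and $\H^2$ by successive energy decrements, glue the two bounds together through a triangle-type inequality $\d^2\leq 2(\W^2+\H^2)$, and finally telescope.

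First I would use the optimality of $\rho^{n+\frac12}$ in \eqref{eq:step_wasserstein}. Since $\rho^n$ has the right mass it is itself an admissible competitor, and comparing the minimized functional at $\rho^{n+\frac12}$ with its value at $\rho^n$ (where $\W^2(\rho^n,\rho^n)=0$) gives
\begin{equation*}
\frac{1}{2\tau}\W^2(\rho^{n+\frac12},\rho^n)\leq \mathcal F(\rho^n)-\mathcal F(\rho^{n+\frac12}).
\end{equation*}
In the same way, the optimality of $\rho^{n+1}$ in \eqref{eq:step_hellinger_fisher_rao} tested against the admissible competitor $\rho^{n+\frac12}$ (for which $\H^2=0$) yields
\begin{equation*}
\frac{1}{2\tau}\H^2(\rho^{n+1},\rho^{n+\frac12})\leq \mathcal F(\rho^{n+\frac12})-\mathcal F(\rho^{n+1}).
\end{equation*}
In particular the energy is non-increasing along the scheme, $\mathcal F(\rho^{n+1})\leq\mathcal F(\rho^{n+\frac12})\leq\mathcal F(\rho^n)$.

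Next I would bound $\d^2(\rho^{n+1},\rho^n)$ by the two quantities above. The key structural fact is that the splitting is designed so that the Monge-Kantorovich substep is mass-preserving, $|\rho^{n+\frac12}|=|\rho^n|$; hence $\rho^{n+\frac12}$ can play exactly the role of the intermediate measure $\tilde\rho_0$ in Proposition~\ref{prop:uncoupling_H2<2W2+2H2}. Concatenating a pure Monge-Kantorovich geodesic from $\rho^n$ to $\rho^{n+\frac12}$ on $[0,\frac12]$ with a pure Fisher-Rao geodesic from $\rho^{n+\frac12}$ to $\rho^{n+1}$ on $[\frac12,1]$ produces an admissible path in $\mathcal A_\d[\rho^n,\rho^{n+1}]$, the time rescaling accounting for the factor $2$, so that
\begin{equation*}
\d^2(\rho^{n+1},\rho^n)\leq 2\big(\W^2(\rho^{n+\frac12},\rho^n)+\H^2(\rho^{n+1},\rho^{n+\frac12})\big).
\end{equation*}

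Finally I would combine the three displays. Inserting the two energy bounds into this last inequality produces the per-step decrement
\begin{equation*}
\frac{1}{\tau}\d^2(\rho^{n+1},\rho^n)\leq 4\big(\mathcal F(\rho^n)-\mathcal F(\rho^{n+1})\big),
\end{equation*}
and summing over $0\leq n\leq N$ telescopes the right-hand side to $4\big(\mathcal F(\rho^0)-\mathcal F(\rho^{N+1})\big)\leq 4\big(\mathcal F(\rho^0)-\inf_{\M^+}\mathcal F\big)$ uniformly in $N$; letting $N\to\infty$ gives \eqref{eq:total_square_distance_d}. I do not anticipate a serious obstacle: the argument is purely metric and rests only on the two optimality comparisons and on the already-proven Proposition~\ref{prop:uncoupling_H2<2W2+2H2}. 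The one point requiring care is verifying that $\rho^{n+\frac12}$ is a legitimate intermediate for the concatenated geodesic, which is exactly what the mass constraint $|\rho^{n+\frac12}|=|\rho^n|$ imposed in \eqref{eq:step_wasserstein} guarantees.
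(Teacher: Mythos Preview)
Your proof is correct and follows essentially the same approach as the paper: test each minimization against the previous step to get the two energy-decrement inequalities, combine them via $\d^2\leq 2(\W^2+\H^2)$, and telescope. The only minor difference is in how that intermediate inequality is justified: the paper invokes the triangle inequality for $\d$ together with Proposition~\ref{prop:comparison_d_W_H} ($\d\leq\W$ and $\d\leq\H$), whereas you obtain it by concatenating geodesics in the spirit of Proposition~\ref{prop:uncoupling_H2<2W2+2H2}. Note that Proposition~\ref{prop:uncoupling_H2<2W2+2H2} as stated uses the specific intermediate $\tilde\rho_0=\frac{|\rho_0|}{|\rho_1|}\rho_1$, so you are right to re-explain the concatenation argument rather than cite the proposition directly; the paper's triangle-inequality route sidesteps this small nuisance.
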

Note that this estimate is useful only if $\mathcal F(\rho^0)<\infty$ and $\mathcal F$ is bounded from below.
The former condition is a natural restriction to finite energy initial data, and the latter is a reasonable assumption which holds true e.g. if $U(\rho)=\rho^m$ for some $m>1$ and the external potential $\Psi(x)\geq 0$ outside of a finite measure set.
\begin{proof}
Testing $\rho=\rho^n$ in \eqref{eq:step_wasserstein} and $\rho=\rho^{n+\frac 12}$ in \eqref{eq:step_hellinger_fisher_rao} we get
\begin{align*}
\frac{1}{2\tau}\W^2(\rho^{n+\frac{1}{2}},\rho^n)+\mathcal F(\rho^{n+\frac 12}) & \leq \mathcal F(\rho^{n}),\\
\frac{1}{2\tau}\H^2(\rho^{n+1},\rho^{n+\frac{1}{2}})+\mathcal F(\rho^{n+1}) & \leq \mathcal F(\rho^{n+\frac{1}{2}}).
\end{align*}
Summing over $n\geq 0$ and noticing that the energy contributions are telescopic, we get the mixed total-square distance estimate
\begin{equation}
\frac{1}{\tau}\sum\limits_{n\geq 0} \Big\{  \H^2(\rho^{n+1},\rho^{n+\frac{1}{2}}) +  \W^2(\rho^{n+\frac{1}{2}},\rho^n)  \Big\}\leq 2\left(\mathcal{F}(\rho^0)-\inf\limits_{\M^+}\mathcal F\right).
\label{eq:total_square_distance_hybrid}
\end{equation}
By triangular inequality and Proposition~\ref{prop:comparison_d_W_H} it is easy to check that
\begin{equation}
\d^2(\rho^{n+1},\rho^n)\leq 2\left(\H^2(\rho^{n+1},\rho^{n+\frac{1}{2}}) + \W^2(\rho^{n+\frac{1}{2}},\rho^n)\right),
\label{eq:discrete_d2<2W2+2H2}
\end{equation}
and our statement follows.
\end{proof}
\begin{rmk}
\label{rmk:total_square_dist_2_functionals}
 It is worth stressing that, when trying to handle two different functionals $\partial_t\rho =\dive(\rho \nabla F_1'(\rho))-\rho F_2'(\rho)$ in the diffusion and reaction, the distance estimate for the two successive $\MK,\FR$ steps would not result in a telescopic sum $\mathcal F(\rho^{n+1})-\mathcal F(\rho^{n+\frac 12})+\mathcal F(\rho^{n+\frac 12})-\mathcal F(\rho^{n})$ as above, but rather in $\mathcal F_1(\rho^{n+1})-\mathcal F_1(\rho^{n+\frac 12})+\mathcal F_2(\rho^{n+\frac 12})-\mathcal F_2(\rho^{n})$.
This can in fact be controlled with suitable compatibility conditions on $\mathcal F_1,\mathcal F_2$ and estimating the crossed dissipations as in \cite{theseMax,GLM_16}, but we decided to focus here on $\mathcal F_1=\mathcal F=\mathcal F_2$ in order to illustrate the general idea in a simpler variational setting.
\end{rmk}

As already discussed the factor $2$ in \eqref{eq:discrete_d2<2W2+2H2} is not optimal, and from the infinitesimal decoupling we should expect $\d^2(\rho^{n+1},\rho^n)\approx  \H^2(\rho^{n+1},\rho^{n+\frac{1}{2}}) + \W^2(\rho^{n+\frac{1}{2}},\rho^n)$.
Thus our estimate \eqref{eq:total_square_distance_d} should have a factor $2$ instead of $4$ in the right-hand side, which is exactly the classical total square distance estimate that one would get applying the direct one-step minimizing scheme \eqref{eq:minimizing_scheme_d} with respect to the full $\d$ metric.
\\

Assuming that we can solve recursively \eqref{eq:step_wasserstein}-\eqref{eq:step_hellinger_fisher_rao} for some given initial datum
$$
\rho_0\in \M^+,\qquad \mathcal{F}(\rho^0)<\infty,
$$
we construct two piecewise-constant interpolating curves
\begin{equation*}
\label{eq:def_piecewise_interpolations} 
t\in ((n-1)\tau,n\tau],\,n\geq 0:\qquad
\left\{
\begin{array}{l}
\tilde \rho^{\tau}(t) =\rho^{n+\frac 12},\\
\rho^{\tau}(t) =\rho^{n+1}.
\end{array}
\right.
\end{equation*}
By construction we have the energy monotonicity
\begin{equation*}
 \label{eq:energy_monotonicity}
 \forall\, 0\leq t_1\leq t_2:\qquad
\mathcal{F}(\rho^{\tau}(t_2))\leq \mathcal{F}(\tilde \rho^{\tau}(t_2))\leq \mathcal{F}(\rho^{\tau}(t_1))\leq \mathcal{F}(\tilde \rho^{\tau}(t_1))\leq \mathcal{F}(\rho^0),
\end{equation*}
and an easy application of the Cauchy-Schwarz inequality with the total square-distance estimate \eqref{eq:total_square_distance_d} gives moreover the classical $\frac 12$-H\"older estimate
\begin{equation}
 \label{eq:approximate_1/2_Holder_estimate}
 \forall\, 0\leq t_1\leq t_2:\qquad 
 \left\{
 \begin{array}{l}
 \d(\rho^\tau(t_2), \rho^\tau(t_1))\leq C|t_2-t_1+\tau|^{\frac{1}{2}}\\
 \d(\tilde\rho^\tau(t_2),\tilde\rho^\tau(t_1))\leq C|t_2-t_1+\tau|^{\frac{1}{2}}
 \end{array}
 \right. .
\end{equation}
Moreover for all $t> 0$ we have $\tilde\rho^\tau(t)=\rho^{n+\frac 12}$ and $\rho^\tau(t)=\rho^{n+1}$ for some $n\geq 0$. From the total square estimate \eqref{eq:total_square_distance_hybrid} we have therefore $\H^2(\tilde{\rho}^\tau(t),\rho^\tau(t))\leq C\tau$, and by Proposition~\ref{prop:comparison_d_W_H} we conclude that the two curves $\rho^\tau,\tilde\rho^\tau$ stay close
\begin{equation}
\label{eq:curves_d_close} 
 \forall\,t\geq 0:\qquad
 \d(\tilde\rho^\tau(t),\rho^\tau(t))\leq \H(\tilde\rho^\tau(t),\rho^\tau(t))\leq C\sqrt{\tau}
\end{equation}
uniformly in $\tau$.

As a fairly general consequence of the total-square distance estimate \eqref{eq:total_square_distance_d}, we retrieve an abstract convergence (pointwise in time) when $\tau\to 0$ for a weak topology:
\begin{cor}
\label{cor:CV_weak*_interpolation_to_curve}
Assume that $\mathcal{F}(\rho^0)<\infty$ and $\mathcal F$ is bounded from below on $\M^+$.
Then there exists a $\d$-continuous curve $\rho\in\mathcal C^{\frac{1}{2}}([0,\infty);\M^+_{\d})$ and a discrete subsequence $\tau\to 0$ (not relabeled here) such that
\begin{equation*}
\forall\,t\geq 0:\qquad
\rho^\tau(t),\tilde\rho^\tau(t)\to \rho(t)\quad\mbox{weakly-}\ast\mbox{ when }\tau\to 0.
\label{eq:pointwise_CV_weak*}
\end{equation*}
\end{cor}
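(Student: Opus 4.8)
The plan is to deduce the result from a generalized Arzel\`a--Ascoli (Helly-type) selection theorem for curves valued in $\M^+$ endowed with the weak-$\ast$ topology, exactly in the spirit of \cite[Prop.~3.3.1]{AGS_08}. The three ingredients this theorem requires are: pointwise-in-time weak-$\ast$ precompactness of the families $\{\rho^\tau(t)\}_\tau$ and $\{\tilde\rho^\tau(t)\}_\tau$, equi-continuity of the interpolants in the distance $\d$, and lower semicontinuity of $\d$ with respect to weak-$\ast$ convergence. Two of these follow directly from the estimates \eqref{eq:approximate_1/2_Holder_estimate} and \eqref{eq:curves_d_close} already established, so the work consists in supplying the compactness and in invoking the machinery correctly.

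First I would establish a uniform mass bound, which yields the pointwise precompactness. The key observation is that $\d$ itself controls mass: integrating the continuity-with-source equation $\partial_t\rho_t+\dive(\rho_t\v_t)=\rho_t r_t$ over $\Omega$ and applying Cauchy--Schwarz gives $|\frac{d}{dt}\sqrt{|\rho_t|}|\leq\frac12\|r_t\|_{L^2(\rd\rho_t)}$, whence, after Cauchy--Schwarz in time along a geodesic, $|\sqrt{|\rho_0|}-\sqrt{|\rho_1|}|\leq\frac12\d(\rho_0,\rho_1)$ (this is sharp, as the one-point formula \eqref{eq:formula_d_one_point_particles} at $x_0=x_1$ shows). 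Combined with the $\frac12$-H\"older estimate \eqref{eq:approximate_1/2_Holder_estimate} anchored at $t=0$, this bounds $|\rho^\tau(t)|$ uniformly in $\tau$ on every finite interval $[0,T]$, so that both $\rho^\tau(t)$ and $\tilde\rho^\tau(t)$ remain in a fixed weak-$\ast$ sequentially compact subset $\{|\mu|\leq R_T\}$ of $\M^+$.

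Next I would feed the equi-continuity estimate \eqref{eq:approximate_1/2_Holder_estimate} and the known weak-$\ast$ lower semicontinuity of $\d$ (see \cite{Peyre_inter_15,LMS_big_15}) into the generalized Arzel\`a--Ascoli theorem on each $[0,T]$. Since $\limsup_{\tau\to0}\d(\rho^\tau(t_2),\rho^\tau(t_1))\leq C|t_2-t_1|^{1/2}=:\omega(t_1,t_2)$ with $\omega$ continuous and vanishing on the diagonal, the theorem produces a subsequence and a limit curve $\rho$ with $\rho^\tau(t)\to\rho(t)$ weakly-$\ast$ for every $t\in[0,T]$, together with the inherited bound $\d(\rho(s),\rho(t))\leq\omega(s,t)$ giving $\rho\in\mathcal C^{1/2}([0,T];\M^+_\d)$. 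A standard diagonal extraction over $T\to\infty$ upgrades this to all of $[0,\infty)$. For the second curve, the closeness estimate \eqref{eq:curves_d_close} gives $\d(\tilde\rho^\tau(t),\rho^\tau(t))\leq C\sqrt\tau\to0$; any weak-$\ast$ limit point $\mu$ of $\tilde\rho^\tau(t)$ then satisfies $\d(\mu,\rho(t))\leq\liminf_\tau\d(\tilde\rho^\tau(t),\rho^\tau(t))=0$ by lower semicontinuity of $\d$, hence $\mu=\rho(t)$ and $\tilde\rho^\tau(t)\to\rho(t)$ as well.

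The routine parts are the diagonal extraction and the identification of the two limits. The genuinely delicate inputs, which I would treat with care, are the two metric properties of $\d$ on which everything rests: the mass-control inequality $|\sqrt{|\rho_0|}-\sqrt{|\rho_1|}|\leq\frac12\d(\rho_0,\rho_1)$ needed for pointwise precompactness, and --- most importantly --- the weak-$\ast$ lower semicontinuity of $\d$, which is precisely what converts equi-continuity in $\d$ into a $\d$-continuous weak-$\ast$ limit and what forces the two interpolants to share the same limit. The discretization artifact $+\tau$ in \eqref{eq:approximate_1/2_Holder_estimate}, reflecting the jumps of the piecewise-constant interpolants at the grid times, is harmless: it survives only inside a $\limsup_{\tau\to0}$ and disappears in $\omega$.
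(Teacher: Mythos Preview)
Your proposal is correct and follows essentially the same route as the paper: mass control via $\d$ gives pointwise weak-$\ast$ precompactness, the approximate $\frac12$-H\"older estimate \eqref{eq:approximate_1/2_Holder_estimate} gives equicontinuity, weak-$\ast$ lower semicontinuity of $\d$ allows one to invoke the refined Arzel\`a--Ascoli theorem \cite[Prop.~3.3.1]{AGS_08}, and the closeness \eqref{eq:curves_d_close} forces the two interpolants to share the same limit. The only cosmetic differences are that the paper uses the mass inequality $|\nu|\leq|\mu|+\d^2(\nu,\mu)$ from \cite{KMV_15} rather than your (sharper) square-root version, and it applies Arzel\`a--Ascoli to both interpolants separately before identifying the limits, whereas you do it once and then pull $\tilde\rho^\tau$ along via \eqref{eq:curves_d_close}.
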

Note that our statement is again unrelated to any PDE consideration, and merely exploits the metric structure.
We recall that the weak-$\ast$ convergence of measures is defined in duality with $\mathcal C_0(\Omega)$ test-functions.
Observe that the two interpolated curves converge to the \emph{same} limit, and note that because $\rho\in \mathcal C([0,\infty);\M^+_\d)$ the initial datum $\rho(0)=\rho^0$ is taken continuously in the $\KFR$ metric sense.
In particular since $\d$ metrizes the narrow convergence of measures \cite[thm. 3]{KMV_15} the initial datum $\rho(0)=\rho^0$ will be taken at least in the narrow sense, which is stronger than weak-$\ast$ or distributional convergence.
\begin{proof}
From the proof of \cite[lem. 2.2]{KMV_15} it is easy to see that we have mass control
$$
\forall\,\mu,\nu\in \M^+:\qquad |\nu|\leq |\mu|+\d^2(\nu,\mu).
$$
Applying this with $\nu=\rho^\tau(t),\tilde\rho^\tau(t)$ and $\mu=\rho^0$, and noting that the square-distance estimate \eqref{eq:total_square_distance_d} controls $\d^2(\rho^\tau(t),\rho^0),\d^2(\tilde\rho^\tau(t),\rho^0)\leq C(t+\tau)$, we see that the masses are controlled as $|\rho^\tau(t)|+|\tilde\rho^\tau(t)|\leq C(1+T)$ uniformly in $\tau$ in any finite time interval $t\in [0,T]$.
By the Banach-Alaoglu in $\M=\mathcal C_0^\ast$ we see that $\rho^\tau(t),\tilde\rho^\tau(t)$ lie in the fixed weakly-$\ast$ relatively compact set $\mathcal K_T=\{|\rho|\leq C(1+T)\}$ for all $t\in [0,T]$.
By \cite[thm. 5]{KMV_15} we know that the $\KFR$ distance is lower semi-continuous with respect to the weak-$\ast$ convergence of measures, and the metric space $(\M^+,\d)$ is complete \cite[thm. 3]{KMV_15}.
Exploiting the time equicontinuity \eqref{eq:approximate_1/2_Holder_estimate}, the lower semi-continuity, and the completeness, we can apply a refined version of the Arzel\`a-Ascoli theorem \cite[prop. 3.3.1]{AGS_08} to conclude that, up to extraction of a discrete subsequence if needed, $\rho^\tau(t)\to \rho(t)$ and $\tilde\rho^\tau(t)\to \tilde\rho(t)$ pointwise in $t\in [0,T]$ for the weak-$\ast$ convergence and for some limit curves $\rho,\tilde\rho\in \mathcal C^{\frac{1}{2}}([0,T];\M^+_{\d})$. 
Moreover $\rho(t),\tilde\rho(t)\in \mathcal K_T$ for all $t\in [0,T]$, and by diagonal extraction we can assume that this holds for all $T>0$.
Finally as we already know that $\rho^\tau(t)$ and $\tilde\rho^\tau(t)$ converge weakly-$\ast$ to $\rho(t)$ and $\tilde\rho(t)$ respectively, we conclude by \eqref{eq:curves_d_close} and lower semi-continuity that $\d(\rho(t),\tilde\rho(t))\leq \liminf\limits_{\tau\to 0} \d(\rho^\tau(t),\tilde\rho^\tau(t))=0$ for any arbitrary $t\geq 0$.
 Thus $\rho=\tilde\rho$ as desired and the proof is complete.
\end{proof}
}

{ In order to connect now the previous abstract metric considerations with the PDE framework, we detail each of the substeps \eqref{eq:step_wasserstein}\eqref{eq:step_hellinger_fisher_rao} and exploit the particular $\MK,\FR$ Riemannian structures to retrieve the corresponding Euler-Lagrange equations.

In order to keep our notations light we write $\mu$ for the previous step and $\rho^*$ for the minimizer.
Thus $\mu=\rho^n$ and $\rho^*=\rho^{n+\frac 12}$ in the first $\MK$ step $\rho^n\to\rho^{n+\frac 12}$, while $\mu=\rho^{n+\frac 12}$ and $\rho^*=\rho^{n+1}$ in the next $\FR$ step $\rho^{n+\frac 12}\to \rho^{n+1}$.}
 %
 \subsection{The Monge-Kantorovich substep}
 \label{section:Wasserstein_substep}
 For some fixed absolutely continuous measure $\mu\in \M_2^+$ (finite second moment) and mass $|\mu|=m$, let us consider here an elementary minimization step
 \begin{equation}
 \rho^* \in \underset{\rho\in\M^+_2,\,|\rho|=m}{\operatorname{Argmin}}\left\{\frac{1}{2\tau}\W^2(\rho,\mu)+\mathcal F(\rho)\right\}.
 \label{eq:wasserstein_step_general}
 \end{equation}
 Note that, if $\Omega$ is bounded, the restriction on finite second moments can be relaxed.
 Further assuming that $\mathcal F$ is lower semi-continuous with respect to the weak $L^1$ convergence (which is typically satisfied for the classical models), it is easy to obtain an absolutely continuous minimizer $\rho^*\in\M^+_2$ with mass $|\rho^*|=m=|\mu|$.
 Additional assumptions (e.g. strict convexity) sometimes guarantee uniqueness.
 Here we do not take interest in optimal conditions guaranteeing existence and/or uniqueness of minimizers, and this should be checked on a case-to-case basis depending on the structure of $U,\Psi,K$.
 
 From the classical theory of optimal transportation we know that there exists a (backward) optimal map $\mathbf t$ from $\rho^*$ to $\mu$, such that
 $$
 \W^2(\rho^*,\mu)=\int_\Omega\left|x-\mathbf t(x)\right|^2\rd \rho^*(x).
 $$
 A by-now standard computation \cite{Santambroggio_book,villani_small} shows that the Euler-Lagrange equation associated with \eqref{eq:wasserstein_step_general} can be written in the form
\begin{equation}
\label{eq:euler_lagrange_wasserstein_riemannian}
\forall \,\mathbf \zeta\in\mathcal{C}^\infty_c(\Omega;\R^d):\qquad
\int _\Omega \frac{\operatorname{id}-\mathbf{t}}{\tau}\cdot \mathbf \zeta\,\rd \rho^*+\int_\Omega \nabla (U'(\rho^*)+\Psi+K\star\rho^*)\cdot \mathbf \zeta\, \rd\rho^*=0.
 \end{equation}
 Using the definition of the pushforward $\mu=\mathbf t\#\rho^*$ we recall the classical Taylor expansion
 \begin{multline}
  \label{eq:Taylor_expansion_W2}
\int_\Omega (\rho^*-\mu)\phi  =\int_{\Omega}(\rho^*-\mathbf t\#\rho^*)\phi = \int_\Omega \big(\phi(x)-\phi(\mathbf t(x))\big)\rho^*(x)\\
 =\int_\Omega\Big(x-\mathbf t(x))\cdot \nabla \phi(x)+\mathcal O\left(\|D^2\phi\|_\infty|x-\mathbf t(x)|^2\right)\Big)\,\rd\rho^*(x) \\
 =\int_\Omega(\operatorname{id}-\mathbf{t})\cdot \nabla \phi\,\rd\rho^* + \mathcal{O}\left(\|D^2\phi\|_\infty\W^2(\rho^*,\mu)\right)
 \end{multline}
 for all $\phi\in\mathcal{C}^\infty_c(\Omega)$.
 Taking $\mathbf \zeta=\nabla\phi$ in \eqref{eq:euler_lagrange_wasserstein_riemannian} and substituting finally yields
 \begin{equation}
\int_\Omega (\rho^*-\mu)\phi=-\tau\int_\Omega \nabla( U'(\rho^*)+\Psi+ K\star \rho^*)\cdot \nabla\phi\, \rd\rho^*
+ \mathcal{O}\left(\|D^2\phi\|_\infty\W^2(\rho^*,\mu)\right)
 \label{eq:euler_lagrange_wasserstein_general}
 \end{equation}
 for all smooth test functions $\phi$.
 This is of course an approximation of the implicit implicit Euler scheme
 $$
 \frac{\rho^*-\mu}{\tau}=\dive(\rho^*\nabla (U'(\rho^*+\Psi+ K\star \rho^*)),
 $$
 the approximate error being controlled quadratically in the $\W$ distance. Note that this corresponds to the pure transport part $\partial_t\rho =\dive(\rho\nabla (U'(\rho)+\Psi+ K\star \rho^*))+(\ldots)$ 
 \tg{in the PDE 
 \eqref{eq:PDE}}.
 
 \subsection{The Fisher-Rao substep}
 \label{section:hellinger_substep}
Let us fix as before an arbitrary measure $\mu\in \M^+$ (no restriction on the second moment), and assume that there exists somehow an absolutely continuous minimizer
 \begin{equation}
 \rho^* \in \underset{\rho\in\M^+}{\operatorname{Argmin}}\left\{\frac{1}{2\tau}\H^2(\rho,\mu)+\mathcal F(\rho)\right\}.
 \label{eq:FRH_step_general}
 \end{equation}
The existence and uniqueness of minimizers can again be obtained under suitable superlinearity, lower semi-continuity, and convexity assumptions on $U,\Psi,K$, and we do not worry about this issue.

Let us start by differentiating the squared distance for suitable perturbations $\rho_\eps$ of the minimizer $\rho^*$.
According to section~\ref{section:HFR_metrics} an arbitrary $\psi\in\mathcal C^\infty_c(\Omega)$ \tg{is identified to }
a tangent vector in $T_{\rho^*}\M^+_\H$ through
$$
\left\{
\begin{array}{l}
 \partial_\eps \rho_\eps =\rho_\eps\psi\\
 \rho_{\eps}(0)=\rho^*
\end{array}
\right.
\qquad \Leftrightarrow \qquad \rho_\eps=\rho^*e^{\eps\psi}.
$$
Denoting by $\mu_s=[(1-s)\sqrt{\mu}+s\sqrt{\rho^*}]^2$ the Fisher-Rao geodesics from $\mu$ to $\rho^*$, the terminal velocity $\partial_s\mu(1)=2\sqrt{\rho^*}(\sqrt{\rho^*}-\sqrt{\mu})$ can be represented by the $L^2(\rd\rho^*)$ action of $r=2\frac{\sqrt{\rho^*}-\sqrt{\mu}}{\sqrt{\rho^*}}$.
Using the first variation formula $\left.\frac{d}{dt}\left(\frac{1}{2}d^2(x(t),y)\right)\right|_{t=0}=\left<x'(0),y'(1)\right>_{x(0)}$ and our $L^2(\rd\rho)$ identification of the tangent spaces in section~\ref{section:FRHKW} we can guess that
\begin{align*}
\left.\frac{d}{d\eps}\left(\frac 1 2 \H^2(\rho_\eps,\mu)\right)\right|_{\eps=0} &=\left<\partial_\eps\rho(0),\partial_s\mu(1)\right>_{T_{\rho^*}\M^+_\H}\\
&=\left(\psi,r\right)_{L^2(\rd\rho^*)}=2\int_{\Omega} (\sqrt{\rho^*}-\sqrt{\mu})\sqrt{\rho^*}\psi,
\end{align*}
which can be checked by differentiating w.r.t. $\eps$ in the explicit representation \eqref{eq:def_HFR}.
Using the same Riemannian formalism we similarly anticipate that
\begin{align*}
\left.\frac{d}{d\eps}\mathcal F(\rho_\eps)\right|_{\eps=0} & = \left<\grad_\H\mathcal F,\partial_\eps\rho(0)\right> _{T_{\rho^*}\M^+_\H}\\
& = \left<F'(\rho^*),\psi\right>_{L^2(\rd\rho^*)} =\int_\Omega\rho^*(U'(\rho^*)+\Psi+ K\star \rho^*)\psi,
\end{align*}
and this can be checked again by differentiating $\frac{d}{d\eps}\mathcal F(\rho_\eps)=\int_\Omega\partial_\eps(\ldots)$ under the integral sign.
Writing the the optimality condition $\left.\frac{d}{d\eps}\left(\frac{1}{2\tau}\H^2(\rho_\eps,\mu)+\mathcal F(\rho_\eps)\right)\right|_{\eps=0}=0$ thus gives the Euler-Lagrange equation
\begin{equation}
\forall \,\psi\in\mathcal C^\infty_c(\Omega):\qquad
\int_{\Omega} (\sqrt{\rho^*}-\sqrt{\mu})\sqrt{\rho^*}\psi=-\frac{\tau}{2}\int_\Omega \big\{U'(\rho^*)+\Psi + K\star \rho^*\big\}\rho^*\psi.
\label{eq:euler_lagrange_hellinger_general}
\end{equation}
In order to relate this with the more standard Euclidean difference quotient, we first assume that  $U'(\rho^*)+\Psi + K\star\rho^*\in L^2(\rd\rho^*)$, or in other words that $\grad_\H\mathcal F(\rho^*)$ can indeed be considered as a tangent vector of $T_{\rho^*}\mathcal M^+_\H$.
This should be natural, but may require a case-to-case analysis depending on the structure of $U,\Psi,K$.
\tg{Then} an easy density argument shows that the previous equality holds for all $\psi\in L^2(\rd\rho^*)$.
Taking in particular $\psi=\frac{\sqrt{\rho^*}+\sqrt{\mu}}{\sqrt{\rho^*}}\phi\in L^2(\rd\rho^*)$ for arbitrary $\phi\in \mathcal C^\infty_c(\Omega)$, we obtain a slight variant of the previous Euler-Lagrange equation \eqref{eq:euler_lagrange_hellinger_general} in the form
\begin{equation}
\label{eq:euler_lagrange_hellinger_general_twisted}
\forall \,\phi\in\mathcal C^\infty_c(\Omega):\qquad
\int_{\Omega}(\rho^*-\mu)\phi=-\tau\int_\Omega \frac{\sqrt{\rho^*}(\sqrt{\rho^*}+\sqrt{\mu})}{2}\big\{U'(\rho^*)+\Psi+ K\star \rho^*\big\}\phi.
\end{equation}
Recalling that in the minimizing scheme we only deal with measures at short $\mathcal{O}(\sqrt{\tau})$ distance, one should essentially think of this as if $\rho^*\approx \mu$ in the right-hand side, and \eqref{eq:euler_lagrange_hellinger_general_twisted} is thus an approximation of the implicit Euler scheme
$$
\frac{\rho^*-\mu}{\tau}=-\rho^*(U'(\rho^*)+\Psi+K\star \rho^*).
$$
Note that this is the reaction part $\partial_t\rho=(\ldots)-\rho(U'(\rho)+\Psi+K\star \rho^*)$ in the PDE \eqref{eq:PDE}.
\begin{rmk}
\label{rmk:no_asymptotic_expansion_Hellinger}

Contrarily to the corresponding approximate Euler-Lagrange equation \eqref{eq:euler_lagrange_wasserstein_general} for one elementary Monge-Kantorovich substep, \eqref{eq:euler_lagrange_hellinger_general_twisted} does not involve any quadratic remainder $\mathcal{O}(\H^2(\rho^*,\mu))$.
The price to pay for this is that the right-hand side appears now as a slight ``twist'' of the more natural and purely Riemannian object $-\rho^*(U'(\rho^*)+\Psi)=-\grad_\H \mathcal F(\rho^*)$ in \eqref{eq:euler_lagrange_hellinger_general_twisted}, the twist occurring through the approximation $\frac{\sqrt{\rho^*}(\sqrt{\rho^*}+\sqrt{\mu})}{2}\approx \rho^*$.
\end{rmk}
\begin{rmk}
A technical issue might arise here for unbounded domains.
Indeed since we construct recursively $\rho^n\overset{ \W^2}{\longrightarrow }\rho^{n+\frac{1}{2}}\overset{\H^2}{\longrightarrow } \rho^{n+1}$ one should make sure that, in the second reaction substep, the minimizer $\rho^{n+1}$ keeps finite second moment so that the scheme can be safely iterated afterward.
This should be generally guaranteed if the external potential $\Psi$ is quadratically confining, but may require once again a delicate analysis depending on the structure of $U,\Psi,K$ (we will show in section~\ref{section:compactness} that this holds e.g. in the simple case $\Psi,K\equiv 0$).
\end{rmk}

\subsection{Convergence to a weak solution}
\label{section:CV_to_weak_solution}
~
{ 
We can now show that, under some strong compactness assumptions, the limit $\rho=\lim \rho^\tau=\lim\tilde\rho^\tau$ is generically a weak solution to the original PDE.
\begin{theo}
\label{theo:CV_to_weak_solution}
 Let $\rho^\tau,\tilde\rho^\tau,\rho$ as in Corollary ~\ref{cor:CV_weak*_interpolation_to_curve}, and assume that
 \begin{equation}
 \label{eq:assumption_strong_CV}
  \left\{
  \begin{array}{rcl}
   \tilde{\rho}^\tau\nabla\left(U'(\tilde\rho^\tau)+\Psi + K\star \tilde\rho^\tau\right)&\rightharpoonup &\rho\nabla\left(U'(\rho)+\Psi+K\star \rho\right)\\
   \sqrt{\rho^\tau}\frac{\sqrt{\rho^\tau}+\sqrt{\tilde\rho^\tau}}{2} (U'(\rho^\tau)+\Psi+K\star \rho^\tau) &\rightharpoonup &\rho(U'(\rho)+\Psi+K\star \rho)
  \end{array}
\right.
 \end{equation}
 at least weakly in $L^1_{\mathrm{loc}}((0,\infty)\times\Omega)$.
Then $\rho$ is a nonnegative weak solution of
\begin{equation*}
\label{eq:IBV_problem_full} 
\left\{
\begin{array}{ll}
 \partial_t \rho=\dive(\rho\nabla(U'(\rho)+\Psi+K\star\rho))-\rho(F'(\rho)+\Psi+K\star\rho) & \mbox{in }(0,\infty)\times \Omega\\
 \rho_{|t=0} = \rho^0 & \mbox{in }\M^+(\Omega)
\end{array}
\right.
\end{equation*}
\end{theo}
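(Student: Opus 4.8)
The plan is to pass to the limit $\tau\to0$ in a discrete weak formulation obtained by summing the two substep Euler-Lagrange equations, using the pointwise weak-$\ast$ convergence of Corollary~\ref{cor:CV_weak*_interpolation_to_curve} together with the strong compactness hypotheses \eqref{eq:assumption_strong_CV}. First I would fix a test function $\xi\in\mathcal C_c^\infty([0,\infty)\times\Omega)$, write $\xi^n(x):=\xi(n\tau,x)$, and split each full increment as
\[
\int_\Omega(\rho^{n+1}-\rho^n)\xi^n = \int_\Omega(\rho^{n+1}-\rho^{n+\frac12})\xi^n + \int_\Omega(\rho^{n+\frac12}-\rho^n)\xi^n.
\]
I feed each half-increment its own Euler-Lagrange identity: the Monge-Kantorovich relation \eqref{eq:euler_lagrange_wasserstein_general} (with $\mu=\rho^n$, $\rho^*=\rho^{n+\frac12}$) for the transport part, and the twisted Fisher-Rao relation \eqref{eq:euler_lagrange_hellinger_general_twisted} (with $\mu=\rho^{n+\frac12}$, $\rho^*=\rho^{n+1}$) for the reaction part. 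Summing over $n\geq0$ and performing a discrete integration by parts (Abel summation) on the left-hand side transfers the time-difference onto the test function,
\[
\sum_{n\geq0}\int_\Omega(\rho^{n+1}-\rho^n)\xi^n = \sum_{n\geq0}\int_\Omega\rho^{n+1}(\xi^n-\xi^{n+1}) - \int_\Omega\rho^0\xi^0,
\]
where $\xi^n-\xi^{n+1}\approx-\tau\,\partial_t\xi(n\tau,\cdot)$ produces the sought time derivative and the remaining boundary term encodes the initial datum.

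Next I would rewrite everything in terms of the piecewise-constant interpolants $\rho^\tau,\tilde\rho^\tau$. The Abel-summed left-hand side is a Riemann sum converging, by the pointwise weak-$\ast$ convergence of Corollary~\ref{cor:CV_weak*_interpolation_to_curve}, the uniform mass bound $|\rho^\tau(t)|\leq C(1+T)$ established in its proof, and dominated convergence in time (the integrand being controlled by $\|\partial_t\xi\|_\infty$ on the compact time support of $\xi$), to $-\int_0^\infty\int_\Omega\rho\,\partial_t\xi\,\rd x\,\rd t-\int_\Omega\rho^0\xi(0,\cdot)\rd x$. On the right-hand side the transport contribution is $\int_0^\infty\int_\Omega\tilde\rho^\tau\nabla(U'(\tilde\rho^\tau)+\Psi+K\star\tilde\rho^\tau)\cdot\nabla\xi$, whose limit is granted by the first line of \eqref{eq:assumption_strong_CV}, while the reaction contribution is $\int_0^\infty\int_\Omega\sqrt{\rho^\tau}\frac{\sqrt{\rho^\tau}+\sqrt{\tilde\rho^\tau}}{2}(U'(\rho^\tau)+\Psi+K\star\rho^\tau)\xi$, whose limit is granted by the second line of \eqref{eq:assumption_strong_CV}; the uniform convergence $\xi^\tau\to\xi$ and $\nabla\xi^\tau\to\nabla\xi$ lets me replace the discretized test function by $\xi$ itself at no cost. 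Collecting these limits yields precisely the weak formulation of the PDE, the boundary term encoding $\rho(0)=\rho^0$ (already attained narrowly in the $\d$ sense by the Corollary), and nonnegativity is inherited from $\rho^\tau\in\M^+$.

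It remains to control the only genuine error, the quadratic Monge-Kantorovich remainder $\mathcal O(\|D^2\xi\|_\infty\,\W^2(\rho^{n+\frac12},\rho^n))$ of \eqref{eq:euler_lagrange_wasserstein_general}. Summing over $n$ and invoking the total-square-distance estimate \eqref{eq:total_square_distance_hybrid}, it is bounded by $C\sum_n\W^2(\rho^{n+\frac12},\rho^n)\leq C\tau\to0$ and thus disappears in the limit. The main obstacle here is not analytic but structural: one must ensure that the discrete reaction term is matched \emph{exactly} against the assumed weak limit, including the ``twisted'' weight $\frac{\sqrt{\rho^{n+1}}(\sqrt{\rho^{n+1}}+\sqrt{\rho^{n+\frac12}})}{2}$ inherited from \eqref{eq:euler_lagrange_hellinger_general_twisted}, and that the diffusion and reaction terms are carried by two \emph{different} interpolants, $\tilde\rho^\tau$ and $\rho^\tau$. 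This is exactly why \eqref{eq:assumption_strong_CV} is stated with the bare gradient acting on $\tilde\rho^\tau$ and the twisted weight on $\rho^\tau$; one cannot replace the twist factor by $\rho^\tau$ before passing to the limit. Since the heavy compactness is granted by hypothesis, the real work is deferred to verifying \eqref{eq:assumption_strong_CV} in concrete situations, as carried out for internal energies in Section~\ref{section:compactness}.
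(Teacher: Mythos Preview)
Your argument is correct and complete. The only difference from the paper's proof is the choice of weak formulation: you test against a full space-time function $\xi\in\mathcal C^\infty_c([0,\infty)\times\Omega)$ and perform an Abel summation to produce the $\partial_t\xi$ term, whereas the paper fixes a purely spatial $\phi\in\mathcal C^\infty_c(\Omega)$ and two times $0<t_1<t_2$, sums the two Euler-Lagrange relations over $N_1\le n<N_2$, and passes to the limit directly in
\[
\int_\Omega(\rho^\tau(t_2)-\rho^\tau(t_1))\phi
\;=\;-\int_{T_1}^{T_2}\int_\Omega\left[\text{diffusion}(\tilde\rho^\tau)+\text{reaction}(\rho^\tau,\tilde\rho^\tau)\right]\phi+\mathcal O(\|D^2\phi\|_\infty\tau),
\]
using the pointwise-in-$t$ weak-$\ast$ convergence from Corollary~\ref{cor:CV_weak*_interpolation_to_curve} on the left and the hypothesis \eqref{eq:assumption_strong_CV} on the right. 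Both routes are standard; the paper's is slightly more economical here because the pointwise-in-time convergence of $\rho^\tau(t_i)$ is already available and lets one avoid the Abel summation and the dominated-convergence-in-time step, while your version has the advantage of packaging the initial datum into the same distributional identity. The treatment of the quadratic $\W^2$ remainder via \eqref{eq:total_square_distance_hybrid} and the careful matching of the twisted reaction weight with the second line of \eqref{eq:assumption_strong_CV} are identical in both.
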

\noindent For the sake of generality we simply assumed here that the nonlinear terms pass to the limit as in \eqref{eq:assumption_strong_CV}.
This is of course a strong hypothesis to be checked in each case of interest, and usually requires \emph{strong convergence} $\rho^\tau,\tilde\rho^\tau\to\rho$ (e.g. pointwise a.e.).
We shall discuss in section~\ref{section:compactness} some strategies to retrieve such compactness.}
\begin{proof}
As already discussed after Corollary~\ref{cor:CV_weak*_interpolation_to_curve}, the initial datum $\rho(0)=\rho^0$ is taken continuously at least in the metric sense $(\M^+,\d)$. 
Moreover, any limit $\rho=\lim\limits_{\tau\to 0} \rho^\tau$ in any weak sense will automatically be nonnegative.

Fix now any $0<t_1<t_2$ and $\phi\in \mathcal{C}^\infty_c(\Omega)$. 
For fixed $\tau$ we have $\rho^\tau(t_i)=\rho^{N_i}$ for $N_i=\lceil t_i/\tau\rceil$, and $T_i=N_i\tau\to t_i$ as $\tau\to 0$.
Moreover for fixed $n\geq 0$ we have by construction the two Euler-Lagrange equations \eqref{eq:euler_lagrange_wasserstein_general}\eqref{eq:euler_lagrange_hellinger_general_twisted}, one for each Monge-Kantorovich and Fisher-Rao substep as in section~\ref{section:Wasserstein_substep} and section~\ref{section:hellinger_substep} respectively. More explicitly, there holds
\begin{multline*}
\int_\Omega (\rho^{n+\frac 12}-\rho^n)\phi=-\tau\int_\Omega \rho^{n+\frac 1 2}\nabla( U'(\rho^{n+\frac 12})+\Psi+ K\star \rho^{n+\frac 12})\cdot \nabla\phi\\
+ \mathcal{O}\left(\|D^2\phi\|_\infty\W^2(\rho^{n+\frac 12},\rho^n)\right)
\end{multline*}
and
$$
\int_{\Omega}(\rho^{n+1}-\rho^{n+\frac 12})\phi=-\tau\int_\Omega \frac{\sqrt{\rho^{n+1}}(\sqrt{\rho^{n+1}}+\sqrt{\rho^{n+\frac 12}})}{2}\big\{U'(\rho^{n+1})+\Psi + K\star \rho^{n+1} \big\}\phi.
$$
Summing from $n=N_1$ to $n=N_2-1$, using the square-distance estimate \eqref{eq:total_square_distance_hybrid} to control the remainder term in the first Euler-Lagrange equation above, and recalling that the interpolated curves are piecewise constant, we immediately get
\begin{multline*}
\int_\Omega \left(\rho^\tau(t_2)-\rho^\tau(t_1)\right)\phi  = \sum\limits_{n=N_1}^{N_2-1}\int_\Omega \left\{(\rho^{n+1}-\rho^{n+\frac 12})+(\rho^{n+\frac 12}-\rho^n)\right\}\phi\\
{ 
=-\sum\limits_{n=N_1}^{N_2-1}\tau \int_\Omega \frac{\sqrt{\rho^{n+1}}(\sqrt{\rho^{n+1}}+\sqrt{\rho^{n+\frac 12}})}{2}\big\{U'(\rho^{n+1})+\Psi + K\star \rho^{n+1} \big\}\phi
}\\
{ 
-\sum\limits_{n=N_1}^{N_2-1}\tau\int_\Omega \rho^{n+\frac 1 2}\nabla( U'(\rho^{n+\frac 12})+\Psi+ K\star \rho^{n+\frac 12})\cdot \nabla\phi}\\
{ 
+ \mathcal O\left(\|D^2\phi\|_\infty\sum\limits_{n=N_1}^{N_2-1}\W^2(\rho^{n+\frac 12},\rho^n)\right)}
\\
 = -\int_{T_1}^{T_2}\int_\Omega \frac{\sqrt{\rho^{\tau}}(\sqrt{\rho^{\tau}}+\sqrt{\tilde\rho^{\tau}})}{2}\big\{U'(\rho^{\tau})+\Psi + K \star \rho^{\tau}\big\}\phi\\
   -\int_{T_1}^{T_2}\int_\Omega \tilde\rho^{\tau}\nabla( U'(\tilde\rho^{\tau})+\Psi + K \star \tilde\rho ^\tau)\cdot \nabla\phi \quad +\mathcal O\left(\|D^2\phi\|_\infty\tau\right).
\end{multline*}
From Corollary~\ref{cor:CV_weak*_interpolation_to_curve} we know that $\rho^\tau(t)$ converge weakly-$\ast$ to $\rho(t)$ pointwise in time, so the left-hand side passes to the limit when $\tau\to 0$. Due to our strong assumption \eqref{eq:assumption_strong_CV} and because $T_i\to t_i$ the right-hand side also passes to the limit.
As a consequence we get
\begin{equation*}
\label{eq:weak_formulation}
\int_\Omega \left(\rho(t_2)-\rho(t_1)\right)\phi  = -\int_{t_1}^{t_2}\int_\Omega \rho\Big(\nabla( U'(\rho)+\Psi + K \star \rho)\cdot \nabla\phi + \big(U'(\rho)+\Psi + K \star \rho\big)\phi\Big)
\end{equation*}
for all $0<t_1<t_2$ and $\phi\in\mathcal C^\infty_c(\Omega)$, which is clearly an admissible weak formulation of $\partial_t\rho=\dive(\rho\nabla(U'(\rho)+\Psi + K \star \rho))-\rho(U'(\rho)+\Psi + K \star \rho)$.
\end{proof}

 If $\Omega\neq \R^d$ some further work may be needed to retrieve the homogeneous Neumann condition $\rho\nabla(U'(\rho)+\Psi+ K \star \rho)\cdot \nu=0$ on $\partial\Omega$.
 This amounts to extending the class of $\mathcal C^\infty_c(\Omega)$ test functions to $\mathcal{C}^1_{loc}(\overline\Omega)$ and should generically hold with just enough regularity on the solution, but we will disregard this technical issue for the sake of simplicity.

\section{Compactness issues: an illustrative example}
\label{section:compactness}
In Theorem~\ref{theo:CV_to_weak_solution} we assumed for simplicity that the nonlinear terms pass to the limit, mainly in the distributional sense. 
In order to prove this, the usual strategy is to obtain first some energy/dissipation-type estimates to show that the nonlinear terms have a weak limit, and then prove pointwise convergence $\rho^\tau(t,x)\to \rho(t,x)$ a.e. $(t,x)\in \R^+\times\Omega$ to identify the weak limit (typically as weak-strong products of limits). 
Thus the problem should amount to retrieving enough compactness on the interpolating curves $\rho^\tau,\tilde\rho^\tau$. 
With the help of any Aubin-Lions-Simon type results this essentially requires compactness in time and space, which can be handled separately for different topologies in a flexible way. 
Compactness in space usually follows from the aforementioned energy/dissipation estimates, and the energy monotonicity should of course help: if e.g. the total energy $\mathcal F(\rho)=\int_\Omega U(\rho)+(\ldots)$ controls any $L^q(\Omega)$ norm then $\mathcal{F}(\rho^\tau(t))\leq \mathcal F(\rho^0)$ immediately controls $\|\rho^\tau\|_{L^\infty(0,\infty;L^q)}$ uniformly in $\tau$.
A rule of thumbs for parabolic equations is usually that space regularity can be transferred to time regularity.
Thus the parabolic nature of the scheme should allow here to transfer space estimates, if any, to time estimates. Note also that some sort of time compactness (approximate equicontinuity) is already guaranteed by \eqref{eq:approximate_1/2_Holder_estimate}, but in a very weak metric sense for which the standard Aubin-Lions-Simon theory does not apply directly.

A slight modification of the usual arguments should however be required here, because the scheme is decomposed in two separate substeps. 
The first Monge-Kantorovich substep \eqref{eq:wasserstein_step_general} encodes the higher order part of the PDE, which is parabolic and should therefore be smoothing.
This regularization can often be quantified using by-now classical methods in (Monge-Kantorovich) optimal transport theory, such as BV estimates \cite{Santambroggio_BV_15}, the \emph{flow-interchange} technique from \cite{matthes_interchange_09}, regularizing $\lambda$-displacement convexity in the spirit of \cite{AGS_08,mccann_convexity_97}, or any other strategy.
On the other hand the second Fisher-Rao substep \eqref{eq:FRH_step_general} encodes the reaction part of the PDE, hence we cannot expect any smoothing at this stage.
One should therefore make sure that, in the step $\rho^{n+\frac 12}\overset{\H}{\longrightarrow}\rho^{n+1}$, the regularity of $\rho^{n+\frac 12}$ inherited from the previous step $\rho^{n}\overset{\W}{\longrightarrow}\rho^{n+\frac 12}$ propagates to $\rho^{n+1}$ at least to some extent.

At this stage we would like to point out one other possible advantage of our splitting scheme: it is well known \cite{AGS_08} that $\lambda$-geodesic convexity is a central tool in the theory of gradient flows in abstract metric spaces, and leads to quantified regularization properties at the discrete level. 
Second order differential calculus {\it \`a la Otto} \cite{Otto_PME_2001} with respect to the $\KFR$ Riemannian structure was discussed in \cite{KMV_15,LMS_small_15} (also earlier suggested in \cite{liero2013gradient}) and allows to determine at least formally if a given functional $\mathcal F$ is $\lambda$-geodesically convex for the distance $\d$.
However, in our scheme each step only sees either one of the differential $\W/\H$ structures and therefore only separate geodesic convexity comes into play.
Consider for example the case of internal energies $\mathcal F(\rho)=\int_\Omega U(\rho)$.
Then the celebrated condition for McCann's displacement convexity \cite{mccann_convexity_97} with respect to $\W$ reads $\rho P'(\rho)-\left(1-\frac 1d\right)P(\rho)\geq 0$ in space dimension $d$, where the pressure $P(\rho):=\rho U'(\rho)-U(\rho)$.
On the other hand using the Riemannian formalism in section~\ref{section:HFR_metrics} it is easy to see that, at least formally, this same functional is $\lambda$-geodesically convex with respect to $\H$ if and only if $\rho U''(\rho)+\frac{U'(\rho)}{2}\geq \lambda$.
This condition can be interpreted as $s\mapsto U(s^2)$ being $\lambda/4$-convex in $s=\sqrt{\rho}$, the latter change of variables naturally arising through \eqref{eq:def_HFR} and $\FR^2(\rho_0,\rho_1)=4\|\sqrt{\rho_1}-\sqrt{\rho_0}\|^2_{L^2}$.
Those two conditions are very easy to check separately and, in the light of the infinitesimal uncoupling, it seems likely that simultaneous convexity with respect to each of the $\W,\H$ metrics is equivalent to convexity with respect to the coupled $\KFR$ structure.
See \cite[section 3]{KMV_15} and \cite[section 5.1]{LMS_small_15} for related discussions.\\

The rest of this section is devoted to the illustration of possible compactness strategies in the simple case
\begin{equation}
\label{hyp:structure_U_Psi}
\left\{
\begin{array}{l}
 \Psi,K\equiv 0,\\
 U\in\mathcal C^1([0,\infty))\cap \mathcal C^2(0,\infty)
 \quad\mbox{ with }U(0)=0,\\
 U',U''\geq 0,\\
 \rho U''(\rho)
 \quad \mbox{ is bounded for small }\rho\in (0,\rho_0],
\end{array}
\right.
\tag{H}
\end{equation}
which from now will be assumed without further mention.
We would like to stress here that \eqref{hyp:structure_U_Psi} holds for any Porous-Medium-type nonlinearity $U_m(\rho)=\frac 1{m-1}\rho^m$ at least in the slow diffusion regime $m>1$, but \emph{not} for the Boltzmann entropy $H(\rho)=\rho\log\rho-\rho$.
Even though the latter is well behaved (displacement convex) with respect to the Monge-Kantorovich structure \cite{JKO_98,villani_small}, it is \emph{not} with respect to the Fisher-Rao one. Indeed it is easy to check that $H(\rho)$ is not convex in $\sqrt{\rho}$, so that the Boltzmann entropy is not $\lambda$-displacement convex with respect to $\H$ for any $\lambda\in\R$.
This would require $\rho H''(\rho)+\frac{H'(\rho)}{2}=1+\frac{\log \rho}{2}\geq\lambda$ for some constant $\lambda$, which obviously fails for small $\rho$ (this can be related to $\rho=0$ being an extremal point in $\M^+$, where all the Riemannian formalism from section~\ref{section:FRHKW} degenerates).
{ Since the purpose of this section is to illustrate that strong compactness can be retrieved at least in some particular cases, we chose to set $\Psi\equiv 0$ to make the computations and estimates as light as possible.
The case $\Psi\not\equiv 0$ follows with only minor modifications at least for reasonable potentials (see e.g. remark~\ref{rmk:implicit_functions_with potential} and \cite{theseMax,GLM_16}).
Including interaction terms $K\not\equiv 0$ may be more involved and require additional assumptions, and we shall not comment further on this.}
%
\subsection{Propagation of regularity at the discrete level}
\label{section:propagation_regularity}
Whenever $U',U''\geq 0$, the PDE $\partial_t \rho=\dive(\rho\nabla U'(\rho))-\rho U'(\rho)=\dive(\rho U''(\rho)\nabla \rho)-\rho U'(\rho)$ is formally parabolic, satisfies the maximum principle $\|\rho(t)\|_\infty\leq \|\rho^0\|_\infty$, and initial regularity should propagate.
We prove below that this holds at the discrete level:
\begin{prop}[BV and $L^\infty$ estimates]
\label{prop:propagation_BV}
Assume that the initial datum $\rho^0\in BV\cap L^\infty(\Omega)$. Then for any $\tau< 2/U'(\|\rho^0\|_\infty)$ there holds
$$
\forall\,t\geq 0:\qquad \|\rho^\tau(t)\|_{\mathrm{BV}(\Omega)}\leq \|\tilde\rho^\tau(t)\|_{\mathrm{BV}(\Omega)}\leq \|\rho^0\|_{\mathrm{BV}(\Omega)}
$$
and
$$
\forall\,t\geq 0:\qquad \|\rho^\tau(t)\|_{L^\infty(\Omega)}\leq \|\tilde\rho^\tau(t)\|_{L^\infty(\Omega)}\leq \|\rho^0\|_{L^\infty(\Omega)}.
$$
\end{prop}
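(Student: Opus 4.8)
The plan is to prove both bounds substep by substep, showing that each of the two elementary minimizations \eqref{eq:step_wasserstein} and \eqref{eq:step_hellinger_fisher_rao} is non-increasing for the $L^\infty$ and the $\mathrm{BV}$ norm separately. Writing $\mu=\rho^n\to\rho^*=\rho^{n+\frac12}$ for the transport substep and $\mu=\rho^{n+\frac12}\to\rho^*=\rho^{n+1}$ for the reaction substep, it suffices to establish $\|\rho^*\|_{L^\infty}\le\|\mu\|_{L^\infty}$ and $\|\rho^*\|_{\mathrm{BV}}\le\|\mu\|_{\mathrm{BV}}$ for each of the two. The full statement then follows by an immediate induction along the scheme $\rho^n\overset{\W}{\longrightarrow}\rho^{n+\frac12}\overset{\H}{\longrightarrow}\rho^{n+1}$, since the intermediate curve $\tilde\rho^\tau$ records the transport output $\rho^{n+\frac12}$ and $\rho^\tau$ records the reaction output $\rho^{n+1}$.

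For the Fisher-Rao substep I would argue directly, exploiting that \eqref{eq:step_hellinger_fisher_rao} decouples pointwise in space. Under \eqref{hyp:structure_U_Psi} one has $\mathcal F(\rho)=\int_\Omega U(\rho)$ and, by the explicit formula in \eqref{eq:def_HFR}, the functional to be minimized is $\frac2\tau\int_\Omega(\sqrt\rho-\sqrt\mu)^2+\int_\Omega U(\rho)$. Setting $s=\sqrt\rho$ and $a=\sqrt\mu$, the pointwise minimizer solves $a=\Phi(s)$ with $\Phi(s):=s\big(1+\tfrac\tau2 U'(s^2)\big)$ --- precisely the Euler-Lagrange relation \eqref{eq:euler_lagrange_hellinger_general}. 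Since $U',U''\ge0$ we get $\Phi'(s)=1+\tfrac\tau2U'(s^2)+\tau s^2U''(s^2)\ge1$, so $\Phi$ is an increasing bijection of $[0,\infty)$ and $\rho^*=F(\mu):=\big(\Phi^{-1}(\sqrt\mu)\big)^2$ is a genuine pointwise function of $\mu$; the boundedness of $\rho U''(\rho)$ near $0$ in \eqref{hyp:structure_U_Psi} keeps $F$ well-behaved down to $\rho=0$. From $\Phi(s)\ge s$ one reads $\sqrt{\rho^*}\le\sqrt\mu$, hence $0\le F(\mu)\le\mu$ pointwise, which at once gives $\|\rho^*\|_{L^\infty}\le\|\mu\|_{L^\infty}$ and $\|\rho^*\|_{L^1}\le\|\mu\|_{L^1}$. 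A short computation yields $F'(\mu)=\tfrac{s}{a\,\Phi'(s)}\in[0,1]$, so $F$ is monotone and $1$-Lipschitz with $F(0)=0$; therefore $|DF(\mu)|\le|D\mu|$ and the total variation does not increase either, giving $\|\rho^*\|_{\mathrm{BV}}\le\|\mu\|_{\mathrm{BV}}$. Note that this substep is unconditional in $\tau$.

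For the Monge-Kantorovich substep I would invoke the now-classical regularity estimates for JKO steps driven by an internal energy. Since $\Omega$ is convex and $U$ is convex under \eqref{hyp:structure_U_Psi}, the $\mathrm{BV}$ estimate of \cite{Santambroggio_BV_15} applies and gives $\mathrm{TV}(\rho^{n+\frac12})\le\mathrm{TV}(\rho^n)$; as this substep preserves the mass $|\rho^{n+\frac12}|=|\rho^n|$, the full $\mathrm{BV}$ norm is controlled. The accompanying maximum principle for the same step yields $\|\rho^{n+\frac12}\|_{L^\infty}\le\|\rho^n\|_{L^\infty}$: heuristically, at an interior maximum of $\rho^{n+\frac12}$ the optimal map $\mathbf t$ satisfies $\mathbf t=\mathrm{id}$ with $D\mathbf t\preceq I$, so the Monge-Amp\`ere relation forces $\rho^{n+\frac12}\le\rho^n\circ\mathbf t\le\|\rho^n\|_{L^\infty}$.

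The main obstacle is this transport substep rather than the reaction one: the $\mathrm{BV}$ bound genuinely relies on convexity of $\Omega$ and of $U$ (the five-gradients / flow-interchange machinery behind \cite{Santambroggio_BV_15}), while a self-contained, regularity-free proof of the $L^\infty$ bound is most naturally obtained through a capping/comparison competitor in \eqref{eq:wasserstein_step_general}. I expect it is precisely in balancing the transport cost against the pressure in such a comparison that the smallness threshold appears, in the equivalent form $\tfrac\tau2U'(\|\rho^0\|_{L^\infty})<1$, i.e. $\tau<2/U'(\|\rho^0\|_{L^\infty})$; pinning down exactly where this factor is used would be the one point to verify carefully. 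Once both substeps are in place, chaining the inequalities along $\rho^n\overset{\W}{\longrightarrow}\rho^{n+\frac12}\overset{\H}{\longrightarrow}\rho^{n+1}$ shows that both norms are non-increasing at every half-step, and induction from $\rho^0$ gives $\|\rho^\tau(t)\|\le\|\tilde\rho^\tau(t)\|\le\|\rho^0\|$ for both the $L^\infty$ and the $\mathrm{BV}$ norm, as claimed.
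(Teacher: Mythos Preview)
Your overall strategy matches the paper's: show that each substep is non-increasing for both norms, then chain. The Monge-Kantorovich half is handled identically (citing \cite{Santambroggio_BV_15} for $\mathrm{TV}$ and the standard maximum principle for $L^\infty$, for which the paper points to \cite{Otto_labyrinth_98,Santambroggio_book}), and your Fisher-Rao half arrives at the same implicit-function computation $F'(\mu)=s/(a\,\Phi'(s))\in[0,1]$.

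However, you have the location of the CFL condition $\tau<2/U'(\|\rho^0\|_\infty)$ exactly backwards. The paper states explicitly that the $\MK$ step requires \emph{no} smallness assumption on $\tau$: the cited $L^\infty$ and $\mathrm{BV}$ bounds are unconditional there. In the paper's argument the CFL condition enters in the $\FR$ step, for the following reason. The paper works from the Euler-Lagrange equation \eqref{eq:euler_lagrange_hellinger_general}, which reads pointwise $(\sqrt{\rho^*}-\sqrt\mu)\sqrt{\rho^*}=-\tfrac\tau2\,\rho^*U'(\rho^*)$ and is trivially satisfied wherever $\rho^*=0$. To apply the implicit function theorem the paper must therefore first rule out $\{\rho^*=0,\ \mu>0\}$ by a separate competitor argument (replacing $\rho^*$ by $\mu$ on that set), and it is precisely in that comparison that the factor $\tfrac\tau2 U'(\|\rho^0\|_\infty)<1$ is spent.

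Your direct pointwise-minimization route actually \emph{sidesteps} this issue: the scalar function $h(s)=\tfrac2\tau(s-a)^2+U(s^2)$ is strictly convex (since $h''(s)=\tfrac4\tau+2U'(s^2)+4s^2U''(s^2)>0$) with $h'(0^+)=-4a/\tau<0$ whenever $a>0$, so the minimizer is automatically interior and equals $\Phi^{-1}(a)$ without any restriction on $\tau$. You should state this boundary check explicitly --- it is exactly where the paper's Euler-Lagrange route and your pointwise route diverge, and it is what makes your $\FR$ argument unconditional. With that one line added your proof is complete and in fact yields the proposition for all $\tau>0$; your speculation about the threshold arising from a capping competitor in the $\MK$ step is simply unfounded.
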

\begin{proof}
We argue at the discrete level by showing that the estimates propagate in each substep.
We shall actually prove a more precise result, namely
\begin{equation}
\label{eq:propagation_regularity_Wasserstein}
\|\rho^{n+\frac{1}{2}}\|_{\mathrm{BV}} 	\leq 	\|\rho^{n}\|_{\mathrm{BV}},
\qquad
\|\rho^{n+\frac 12}\|_{L^{\infty}}	 \leq 	 \|\rho^{n}\|_{L^\infty}
\end{equation}
and
\begin{equation}
\label{eq:propagation_regularity_Hellinger}
\|\rho^{n+1}\|_{\mathrm{BV}} 	\leq 	\|\rho^{n+\frac{1}{2}}\|_{\mathrm{BV}},
\qquad
\rho^{n+ 1}(x)	 \leq 	 \rho^{n+\frac 12}(x)\mbox{ a.e.}
\end{equation}

The propagation \eqref{eq:propagation_regularity_Wasserstein} in the first $\MK$ step only requires convexity $U''\geq 0$ and no smallness condition on the time step $\tau$.
This should be expected since the $\MK$ step is an \emph{implicit} discretization of $\partial_t\rho=\dive(\rho\nabla U'(\rho))=\dive(\rho U''(\rho)\nabla\rho)$, which is formally parabolic as soon as $U''\geq 0$.
We recall first that by construction the step is mass preserving, $\|\rho^{n+\frac{1}{2}}\|_{L^1}=\|\rho^{n}\|_{L^1}$.
With our assumption $U''\geq 0$ we can directly apply \cite[thm. 1.1]{Santambroggio_BV_15} to obtain $\|\rho^{n+\frac{1}{2}}\|_{\mathrm{TV}}\leq \|\rho^{n}\|_{\mathrm{TV}}$, which immediately entails the BV estimate.
An early proof of $\|\rho^{n+\frac 12}\|_{L^\infty}	\leq 	\|\rho^{n}\|_{L^\infty}$ can be found in \cite{Otto_labyrinth_98} for the particular case $U(\rho)=\rho^2$, and the case of general convex $U$ is covered by \cite[prop. 7.32]{Santambroggio_book} (see also \cite{carlier_santambroggio_urban_planning_05,santambrogio_transport_concentration_07}).
For the propagation \eqref{eq:propagation_regularity_Hellinger} in the $\FR$ step we show below that the minimizer $\rho^{n+1}$ can be written as
$$
\rho^{n+1}(x) =R(\rho^{n+\frac 12}(x))
\qquad \mbox{a.e. }x\in\Omega
$$
for some $1$-Lipschitz function $R:\R^+\to \R^+$ with $R(0)=0$.
This will ensure that $0\leq \rho^{n+1}(x)\leq \rho^{n+\frac 12}(x)$ and entail the $L^\infty$ and $L^1$ bounds as well as the total variation estimate (see \cite{AFP_BV_00} for the composition of $\mathrm{Lip}\,\circ\,\mathrm{BV}$ maps). 
Note that $\rho^{n+1}(x)\leq \rho^{n+\frac 12}(x)$ shows in particular that the second moments propagate to the next step, which should require further assumptions on $U,\Psi$ in the general case.
In the rest of the proof we write $\rho^*=\rho^{n+1}$ and $\mu=\rho^{n+\frac 12}$ for simplicity, in agreement with the notations in section~\ref{section:hellinger_substep}.

By \eqref{eq:euler_lagrange_hellinger_general} with $\Psi,K\equiv 0$ we see that
\begin{equation}
\label{eq:implicit_rho_mu_bad}
(\sqrt{\rho^*}-\sqrt{\mu})\sqrt{\rho^*}=-\frac{\tau}{2}\rho^*U'(\rho^*)
\end{equation}
at least in $L^1_{\mathrm{loc}}(\Omega)$, hence a.e. $x\in\Omega$.
From $U'\geq 0$ we immediately get that either $\rho^*=0$ or $\sqrt{\rho^*}\leq \sqrt{\mu}$, which gives in any case $\rho^*(x)\leq \mu(x)$ a.e.\\
We show now that if the CFL condition $\tau\leq U'(\|\rho^0\|_\infty)/2$ holds then $\rho^*$ and $\mu$ share the same support, i-e $\rho^*(x)>0\Leftrightarrow \mu(x)>0$.
From the previous inequality $\rho^*\leq \mu$ we only have to show that $\rho^*(x)>0$ as soon as $\mu(x)>0$.
Assume by contradiction that there is some subset $E\subset \Omega$ with positive Lebesgue measure such that $\rho^*(x)=0$ but $\mu(x)>0$ in $E$.
We claim that
$$
\overline\rho:=\rho^*\chi_{E^\complement}+\mu\chi_E
$$
is then a strictly better competitor than the minimizer $\rho^*$.
In order to check this we first compute the square distance
\begin{multline*}
\frac{1}{4}\left(\H^2(\overline\rho,\mu)-\H^2(\rho^*,\mu)\right)  =\int_\Omega\left|\sqrt{\overline\rho}-\sqrt{\mu}\right|^2 -\int_\Omega\left|\sqrt{\rho^*}-\sqrt{\mu}\right|^2\\
 =\left(\int_{E^\complement}\left|\sqrt{\rho^*}-\sqrt{\mu}\right|^2	+	\int_E\left|\sqrt{\mu}-\sqrt{\mu}\right|^2\right)\\
-\left(\int_{E^\complement}\left|\sqrt{\rho^*}-\sqrt{\mu}\right|^2+\int_E\left|0-\sqrt{\mu}\right|^2\right) = -\int_E\mu <0.
\end{multline*}
For the energy contribution we have by convexity
\begin{align*}
\mathcal{F}(\overline\rho)-\mathcal{F}(\rho^*)  = \int_\Omega U(\overline{\rho})-U(\rho^*) & \leq \int_\Omega U'(\overline\rho)(\overline{\rho}-\rho^*) \\
& = \int_E U'(\overline\rho) (\mu-0)\leq U'(\|\rho^0\|_\infty)\int_E\mu.
\end{align*}
Note that $0\leq \rho^*,\overline{\rho},\mu\leq \|\rho^0\|_\infty$ almost everywhere, so that all these integrals are well-defined.
Gathering these two inequalities we obtain
$$
 \frac{1}{2\tau} \left(\H^2(\overline{\rho},\mu)-\H^2(\rho^*,\mu)\right)+\left(\mathcal{F}(\overline\rho)-\mathcal F(\rho^*)\right)
  \leq \left(-\frac{2}{\tau}+U'(\|\rho^0\|_\infty)\right)\int_E\mu<0
$$
because $\int_E\mu>0$ and $\tau<2/U'(\|\rho^0\|_\infty)$.
This shows that $\overline\rho$ is a strictly better competitor and yields the desired contradiction, thus $\rho^*>0\Leftrightarrow \mu>0$.

Now inside the common support of $\rho^*,\mu$ we can divide \eqref{eq:implicit_rho_mu_bad} by $\sqrt{\rho^*}>0$, and $\rho=\rho^*(x)$ is a solution of the implicit equation
$$
f(\rho,\mu)=0
\quad\mbox{with}\quad
f(\rho,\mu):=\sqrt{\rho}\left(1+\frac{\tau}{2}U'(\rho)\right)-\sqrt{\mu}
$$
with $\mu=\mu(x)$ and a.e. $x\in \operatorname{supp}\rho^*=\operatorname{supp}\mu$.
An easy application of the implicit functions theorem shows that, for any $\mu>0$, this has a unique solution $\rho=R(\mu)$ for a $\mathcal C^1(0,\infty)$ function $R$ satisfying $0<R(\mu)\leq \mu$ for $\mu>0$.
Moreover one can compute explicitly for all $\mu>0$  
\begin{multline*}
0<\frac{dR}{d\mu}(\mu)=-\left.\frac{\partial_\mu f}{\partial_\rho f}\right|_{\rho=R(\mu)} =\frac{\frac{1}{2\sqrt{\mu}}}{\frac{1}{2\sqrt{\rho}}\left(1+\frac{\tau}{2}U'(\rho)\right)+\frac{
\tau}{2}\sqrt{\rho}U''(\rho)}\\
 \leq \frac{\frac{1}{2\sqrt{\mu}}}{\frac{1}{2\sqrt{\rho}}\left(1+\frac{\tau}{2}U'(\rho)\right)}=\frac{1}{\frac{\sqrt{\mu}}{\sqrt{\rho}}\left(1+\frac{\tau}{2}U'(\rho)\right)}=\frac{\rho}{\mu}\leq 1,
\end{multline*}
where we used successively $U''\geq 0$, $f(\rho,\mu)=0\Leftrightarrow 1+\frac{\tau}{2}U'(\rho) =\frac{\sqrt{\mu}}{\sqrt{\rho}}$, and $\rho=R(\mu)\leq \mu$.
Extending by continuity $R(0)=0$, we have shown that $\rho^*(x)= R(\mu(x))$ a.e. $x\in \Omega$ for some 1-Lipschitz function $R:\R^+\to\R^+$ with $R(0)=0$, and the proof is complete.
\end{proof}
\begin{rmk}
\label{rmk:implicit_functions_with potential}\tg{
A closer analysis of the implicit functions theorem above reveals that the argument only requires $U'\geq 0$ and $\rho U''(\rho)+U'(\rho)/2\geq 0$, which is less stringent than our assumption $U',U''\geq 0$ as in \eqref{hyp:structure_U_Psi}.
As already suggested this former condition corresponds to convexity of $s\mapsto U(s^2)$ in the $s=\sqrt{\rho}$ variable, or more intrinsically to geodesic convexity of $\mathcal F(\rho)=\int_\Omega U(\rho)$ with respect to the Fisher-Rao distance.
 We also point out that the same approach works with external potentials $\Psi\not\equiv 0$ under suitable structural assumptions: one shows first that strict positivity is preserved in the sense that $\operatorname{supp}\,\rho^{n+1}=\operatorname{supp}\,\rho^{n+\frac 1 2}$, which is to be expected since the ODE $\partial_t\rho =-\rho(U'(\rho)+\Psi(x))$ formally preserves positivity. 
 Exploiting the Euler-Lagrange equations \eqref{eq:euler_lagrange_hellinger_general}\eqref{eq:euler_lagrange_hellinger_general_twisted}, an implicit functions theorem $f(\rho,\mu,\Psi)=0\Leftrightarrow \rho =R(\mu,\Psi)$ then applies inside the common support to propagate the regularity.
 This still controls $\nabla \rho=\partial_\mu R\,\nabla \mu +\partial_\Psi R\nabla\Psi$ provided that $\Psi$ is smooth enough, see \cite{GLM_16,theseMax} for details.
}
\end{rmk}
~
\subsection{Compactness and Energy Dissipation Inequality}
\label{section:EDI}
~
{ 
In this section we check that our strong assumption \eqref{eq:assumption_strong_CV} in Theorem~\ref{theo:CV_to_weak_solution} holds in the particular case of internal energies only, i-e that the nonlinear terms in the PDE pass to the limit.
We start by improving the weak convergence in Corollary~\ref{cor:CV_weak*_interpolation_to_curve}:
}
\begin{prop}
\label{prop:strong_CV_L1_loc_L1}
Assume \eqref{hyp:structure_U_Psi}. Then
$$
\rho^\tau,\tilde\rho^\tau \to \rho
\qquad \mbox{in }L^1_{\mathrm{loc}}([0,\infty);L^1)
$$
for some (discrete) subsequence $\tau\to 0$.
\end{prop}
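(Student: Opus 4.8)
The plan is to combine the uniform spatial $\mathrm{BV}\cap L^\infty$ bounds of Proposition~\ref{prop:propagation_BV} with the pointwise-in-time weak-$\ast$ convergence already granted by Corollary~\ref{cor:CV_weak*_interpolation_to_curve}, thereby upgrading weak convergence to strong $L^1$ convergence \emph{without} appealing to any Aubin-Lions-Simon machinery: the time equicontinuity has in effect already been consumed to produce pointwise-in-time convergence in the Corollary, so it only remains to add spatial compactness at each fixed time. I work throughout along the subsequence $\tau\to0$ fixed in Corollary~\ref{cor:CV_weak*_interpolation_to_curve}. First I would fix $t\ge0$ and treat the time slice. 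By Proposition~\ref{prop:propagation_BV} the family $\{\rho^\tau(t)\}_\tau$ is bounded in $\mathrm{BV}(\Omega)\cap L^\infty(\Omega)$ uniformly in $\tau$. Since $\Omega$ is convex, for every ball $B_R$ the set $\Omega\cap B_R$ is bounded with Lipschitz boundary, so the compact embedding $\mathrm{BV}(\Omega\cap B_R)\hookrightarrow\hookrightarrow L^1(\Omega\cap B_R)$ shows $\{\rho^\tau(t)\}_\tau$ is relatively compact in $L^1_{\mathrm{loc}}(\Omega)$. Any $L^1_{\mathrm{loc}}$-subsequential limit, tested against $\mathcal C_c(\Omega)$, must coincide with the weak-$\ast$ limit $\rho(t)$ of Corollary~\ref{cor:CV_weak*_interpolation_to_curve}; by uniqueness of the limit the whole (sub)sequence converges, $\rho^\tau(t)\to\rho(t)$ in $L^1_{\mathrm{loc}}(\Omega)$, for every fixed $t$.

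Next I would upgrade this to convergence in $L^1(\Omega)$ by a tightness argument. Using $\W^2(\nu,|\nu|\delta_0)=M_2(\nu)$ together with the triangle inequality for $\W$ in the transport substep and the pointwise bound $\rho^{n+1}\le\rho^{n+\frac12}$ of Proposition~\ref{prop:propagation_BV} in the reaction substep, a telescoping estimate combined with \eqref{eq:total_square_distance_hybrid} and Cauchy-Schwarz yields a uniform second-moment bound $M_2(\rho^\tau(t))\le C(T)$ for $t\in[0,T]$, independently of $\tau$. Hence $\int_{\Omega\setminus B_R}\rho^\tau(t)\le C(T)/R^2$ is uniformly small, and combining this tightness with the $L^1_{\mathrm{loc}}$ convergence and the finiteness of the limit measure $\rho(t)$ gives $\rho^\tau(t)\to\rho(t)$ in $L^1(\Omega)$ for every $t$.

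Finally I would integrate in time by dominated convergence. For $t\in[0,T]$ the mass control from the proof of Corollary~\ref{cor:CV_weak*_interpolation_to_curve} gives $\|\rho^\tau(t)-\rho(t)\|_{L^1(\Omega)}\le|\rho^\tau(t)|+|\rho(t)|\le C(1+T)$, a constant and hence integrable dominating function on $[0,T]$, while the integrand tends to $0$ pointwise in $t$ by the previous step. Dominated convergence then yields $\int_0^T\|\rho^\tau(t)-\rho(t)\|_{L^1(\Omega)}\,dt\to0$, that is $\rho^\tau\to\rho$ in $L^1((0,T);L^1)$ for every $T>0$, which is the claim. The argument for $\tilde\rho^\tau$ is identical, since it enjoys the same bounds of Proposition~\ref{prop:propagation_BV} and the same weak-$\ast$ limit $\rho$; alternatively one reduces to the case of $\rho^\tau$ via $\d(\tilde\rho^\tau(t),\rho^\tau(t))\le C\sqrt\tau$ from \eqref{eq:curves_d_close}.

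I expect the main obstacle to be the tightness step on the unbounded domain: one must rule out mass escaping to infinity uniformly in $\tau$, which requires the uniform second-moment bound sketched above. This is exactly where the restriction $\Psi,K\equiv0$ is convenient, since the transport substep alone controls the second moments without any external confinement; for general potentials this step would demand a more careful, case-dependent analysis. The spatial and temporal parts, by contrast, are essentially routine once Proposition~\ref{prop:propagation_BV} and Corollary~\ref{cor:CV_weak*_interpolation_to_curve} are in hand.
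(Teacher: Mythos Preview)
Your argument is correct and follows essentially the same route as the paper's \emph{first} proof: fix $t$, use the uniform $\mathrm{BV}$ bound of Proposition~\ref{prop:propagation_BV} to get $L^1$ relative compactness of the time slice, identify the limit via the weak-$\ast$ convergence of Corollary~\ref{cor:CV_weak*_interpolation_to_curve}, and then apply dominated convergence in time using the uniform $L^1$ mass bound. The paper simply invokes $\mathrm{BV}(\Omega)\subset\subset L^1(\Omega)$ in one line, implicitly treating the domain as bounded (the second proof says so explicitly), whereas you add the second-moment tightness step to handle the unbounded case honestly; this is a welcome clarification rather than a different strategy. One minor remark: for the dominating function you may use the sharper monotonicity $\|\rho^\tau(t)\|_{L^1}\le\|\rho^0\|_{L^1}$ (since $\rho^{n+1}\le\rho^{n+\frac12}$ and the $\MK$ step is mass-preserving), which is what the paper invokes; your $C(1+T)$ bound coming from the $\KFR$ mass control also works.
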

We give two proofs: the first one is elementary and fully exploits the uniform-in-time compactness estimates from Proposition~\ref{prop:propagation_BV}, {  which were derived here for the particular case $\Psi\equiv K\equiv 0$ only.
The second proof is less straightforward but enlightens the general idea of transferring space regularity to time regularity through the PDE itself, and should apply to non-trivial potentials and interactions with minor modifications}.
\begin{proof}[First proof of Proposition~\ref{prop:strong_CV_L1_loc_L1}]
Let us recall from Proposition~\ref{cor:CV_weak*_interpolation_to_curve} that $\rho^\tau(t),\tilde\rho^\tau(t)$ both converge weakly-$\ast$ to the same limit $\rho(t)$ pointwise in time. 
We claim that this weak-$\ast$ convergence can be strengthened into strong $L^1(\Omega)$ convergence.
Indeed for any fixed $t\geq 0$ we have $\|\rho^\tau(t)\|_{\mathrm{BV}},\|\tilde\rho^\tau(t)\|_{\mathrm{BV}}\leq \|\rho^0\|_{\mathrm{BV}}$ so by compactness $\mathrm{BV}(\Omega)\subset\subset L^1(\Omega)$ we see that $\{\rho^\tau(t)\}_{\tau>0},\{\tilde \rho^\tau(t)\}_{\tau>0}$ are $L^1$ relatively compact for fixed $t\geq 0$. 
Because strong $L^1$ convergence implies in particular weak-$\ast$ convergence of measures, and because we already know that these sequences are weakly-$\ast$ convergent, uniqueness of the limit shows in fact that the whole sequences are strongly converging in $L^1$ to the same limit 
$$
\forall\, t\geq 0:\qquad \lim\limits_{L^1}\rho^\tau(t)=\lim\limits_{\mathrm w -\ast} \rho^\tau(t)=\rho(t)=\lim\limits_{\mathrm w -\ast} \tilde\rho^\tau(t) = \lim\limits_{L^1}\tilde\rho^\tau(t).
$$
From this strong pointwise-in time $L^1$ convergence and the uniform $L^\infty(0,\infty;L^1)$ bounds from Proposition~\ref{prop:propagation_BV}, an easy application of Lebesgue's dominated convergence theorem in any finite time interval $[0,T]$ finally gives
strong $L^1((0,T);L^1)$ convergence for all $T>0$.
\end{proof}
Before giving the second proof we need a well known technical result:
\begin{lem}
\label{lem:wasserstein_controls_H-s}
Let $\mu_0,\mu_1$ be any absolutely continuous measures with finite second moments, same mass $|\mu_0|=|\mu_1|$, and bounded in $L^p(\Omega)$ for some $1\leq p\leq \infty$ by the same constant $C_p$. Then
$$
\forall \,\phi\in W^{1,2p'}(\Omega):\qquad \left|\int_\Omega(\mu_1-\mu_0)\phi\right|\leq \sqrt{C_p}\W(\mu_0,\mu_1)\|\nabla\phi\|_{L^{2p'}},
$$
with the convention $1'=\infty$ and $\infty'=1$.
\end{lem}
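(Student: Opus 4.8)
The plan is to represent $\W$ dynamically via the Benamou--Brenier formula and to exploit the displacement convexity of $L^p$ norms along Wasserstein geodesics. First I would use Theorem~\ref{theo:benamou_brenier} to select a constant-speed geodesic $(\rho_t,\v_t)\in\mathcal A_\W[\mu_0,\mu_1]$ realizing $\int_0^1\int_\Omega|\v_t|^2\rd\rho_t\,\rd t=\W^2(\mu_0,\mu_1)$, where $t\mapsto\rho_t$ is narrowly continuous with endpoints $\mu_0,\mu_1$ and solves $\partial_t\rho_t+\dive(\rho_t\v_t)=0$. It suffices to prove the estimate for $\phi\in\mathcal C^\infty_c(\Omega)$, the general case $\phi\in W^{1,2p'}(\Omega)$ following by a routine density argument; note that since $|\mu_0|=|\mu_1|$ the left-hand side is insensitive to adding constants to $\phi$, so only $\nabla\phi$ enters.

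Next I would pair the continuity equation with the fixed test function $\phi$. Using $\partial_t\rho_t=-\dive(\rho_t\v_t)$ and the distributional formulation recalled in Remark~\ref{rmk:functional_space_L2(drho)}, the map $t\mapsto\int_\Omega\phi\,\rd\rho_t$ is absolutely continuous with derivative $\int_\Omega\nabla\phi\cdot\v_t\,\rd\rho_t$, so integrating over $[0,1]$ gives
$$
\int_\Omega(\mu_1-\mu_0)\phi=\int_0^1\int_\Omega\nabla\phi\cdot\v_t\,\rd\rho_t\,\rd t.
$$
Viewing $\rd\rho_t\,\rd t$ as a single measure on $[0,1]\times\Omega$ and applying the Cauchy--Schwarz inequality yields
$$
\Big|\int_\Omega(\mu_1-\mu_0)\phi\Big|\le\Big(\int_0^1\!\!\int_\Omega|\nabla\phi|^2\rd\rho_t\,\rd t\Big)^{1/2}\Big(\int_0^1\!\!\int_\Omega|\v_t|^2\rd\rho_t\,\rd t\Big)^{1/2},
$$
where the second factor is exactly $\W(\mu_0,\mu_1)$.

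It then remains to bound the first factor. Since $\phi$ is independent of $t$, H\"older's inequality in space gives $\int_\Omega|\nabla\phi|^2\rho_t\le\|\rho_t\|_{L^p}\,\|\nabla\phi\|_{L^{2p'}}^2$, so the factor is controlled as soon as I know $\|\rho_t\|_{L^p}\le C_p$ uniformly in $t\in[0,1]$. This uniform bound is the crux of the argument. For $1<p<\infty$ it follows from McCann's displacement convexity \cite{mccann_convexity_97,AGS_08}: the functional $\rho\mapsto\int_\Omega\rho^p$ is geodesically convex along Wasserstein geodesics, so $t\mapsto\int_\Omega\rho_t^p$ is convex on $[0,1]$ and hence $\int_\Omega\rho_t^p\le\max(\int_\Omega\mu_0^p,\int_\Omega\mu_1^p)\le C_p^p$. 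The endpoint cases are elementary: for $p=1$ mass conservation along the continuity equation gives $\|\rho_t\|_{L^1}=|\mu_0|\le C_1$, and for $p=\infty$ the maximum principle along displacement interpolation gives $\|\rho_t\|_{L^\infty}\le\max(\|\mu_0\|_{L^\infty},\|\mu_1\|_{L^\infty})\le C_\infty$. Substituting $\|\rho_t\|_{L^p}\le C_p$ back, the first factor is bounded by $\sqrt{C_p}\,\|\nabla\phi\|_{L^{2p'}}$, and combining with the previous display yields the claim.

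The main obstacle is precisely this uniform-in-time $L^p$ control of the interpolating density $\rho_t$; everything else is a direct computation. Invoking displacement convexity disposes of it cleanly for $1<p<\infty$, while the borderline exponents $p=1,\infty$ must be treated separately by conservation of mass and by the $L^\infty$ maximum principle for displacement interpolation, respectively.
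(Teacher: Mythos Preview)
Your proof is correct and follows essentially the same route as the paper: represent $\int_\Omega(\mu_1-\mu_0)\phi$ via the continuity equation along the Wasserstein geodesic, apply Cauchy--Schwarz and H\"older, and control $\|\rho_t\|_{L^p}$ uniformly in $t$ via mass conservation ($p=1$), the $L^\infty$ maximum principle ($p=\infty$), and displacement convexity of $\int\rho^p$ ($1<p<\infty$). The only cosmetic difference is that the paper applies Cauchy--Schwarz in space pointwise in $t$ (using the constant-speed property $\|\v_t\|_{L^2(\rd\rho_t)}=\W(\mu_0,\mu_1)$) before integrating, whereas you apply it once in space-time; both yield the identical bound.
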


\begin{proof}
Let $(\mu_t,\v_t)_{t\in [0,1]}$ be the unique Monge-Kantorovich geodesics from $\mu_0$ to $\mu_1$, satisfying $\partial_t\mu_t+\dive(\mu_t\v_t)=0$ with constant metric speed $\|\v_t\|_{L^2(\rd\mu_t)}=cst=\W(\mu_0,\mu_1)$. 
We first claim that $\|\mu_t\|_{L^p}\leq C_p$ as well along this geodesics. 
Indeed if $p=1$ this is simply the mass conservation, and the proof for $p=\infty$ can be found in \cite{Otto_labyrinth_98}.
For $1<p<\infty$ this is a simple consequence of the displacement convexity of $\mathcal{E}_p[\mu]=\int_{\Omega}\frac{\mu^p}{p-1}$, \cite[thm. 5.15]{villani_small}.
Using the weak formulation of the continuity equation, we compute by H\"older's inequality
\begin{align*}
 \left|\int_\Omega(\mu_1-\mu_0)\phi\right| & =\left|\int_0^1\int_\Omega \v_t\cdot \nabla\phi \,\rd \mu_t\rd t\right|
 \leq \int_0^1\left(\int_\Omega|\v_t|^2\rd\mu_t\right)^{\frac{1}{2}}\left(\int_\Omega|\nabla\phi|^2\mu_t\right)^{\frac{1}{2}}\rd t\\
 & \leq \W(\mu_0,\mu_1)\int_0^1 (\|\mu_t\|_{L^p}\||\nabla\phi|^2\|_{L^{p'}})^{\frac 12}\rd t\leq \sqrt{C_p}\W(\mu_0,\mu_1)\|\nabla\phi\|_{L^{2p'}}
\end{align*}
and the proof is complete.
\end{proof}

\begin{proof}[Second proof of Proposition~\ref{prop:strong_CV_L1_loc_L1}]
Here we assume that $\Omega$ is bounded for simplicity, but the same argument would actually work for unbounded domains simply replacing all the functional spaces by their local counterparts ($\mathrm{BV}_{\mathrm{loc}}$, $H^1_{\mathrm{loc}}$, $L^p_{\mathrm{loc}}$\ldots).

We first control the difference quotient $\|\rho^{n+1}-\rho^{n}\|_Y$ in the dual space $Y:=H^1(\Omega)^*$.
For the Monge-Kantorovich step we can apply the previous Lemma~\ref{lem:wasserstein_controls_H-s} with $p=\infty$, $2p'=2$, $\|\rho^{n+\frac{1}{2}}\|_{L^\infty}\leq \|\rho^n\|_{L^\infty}\leq \|\rho^0\|_{L^\infty}$ and obtain by duality
$$
\|\rho^{n+\frac{1}{2}}-\rho^n\|_{Y}\leq C\,\W(\rho^{n+\frac{1}{2}},\rho^n).
$$
For the reaction step we recall the Euler-Lagrange equation \eqref{eq:euler_lagrange_hellinger_general_twisted}, which reads for $\Psi,K\equiv 0$
$$
\forall \,\phi\in\mathcal C^\infty_c(\Omega):\qquad
\int_{\Omega}(\rho^{n+1}-\rho^{n+\frac 12})\phi=-\tau\int_\Omega \frac{\sqrt{\rho^{n+1}}(\sqrt{\rho^{n+1}}+\sqrt{\rho^{n+\frac 12}})}{2}U'(\rho^{n+1})\phi.
$$
Because in the right-hand side $\rho^{n+\frac 12},\rho^{n+1}$ are bounded in $L^1\cap L^\infty(\Omega)$ uniformly in $n$ this gives 
$$
\|\rho^{n+1}-\rho^{n+\frac 12}\|_Y \leq \|\rho^{n+\frac 12}-\rho^{n+1}\|_{L^2}\leq C\tau.
$$
By triangular inequality we deduce from the previous two estimates that
$$
\|\rho^{n+1}-\rho^{n}\|_Y\leq C(\tau+\W(\rho^{n+1},\rho^n)),
$$
and using the square distance estimate \eqref{eq:total_square_distance_hybrid} and Cauchy-Schwarz inequality we obtain the approximate equicontinuity
$$
\forall\, 0\leq t_1\leq t_2:
\qquad
\|\rho^\tau(t_2)-\rho^\tau(t_1)\|_Y\leq C (|t_2-t_1+\tau|+|t_2-t_1+\tau|^{\frac{1}{2}}).
$$
Because the embedding $H^1\subset\subset L^2$ is compact we have $L^2\subset\subset Y$ as well.
Thanks to the $L^1\cap L^\infty(\Omega)$ bounds from Proposition~\ref{prop:propagation_BV} we have $\tau$-uniform bounds $ \|\rho^\tau(t)\|_{L^2}	\leq C$, and we see that there is a $Y$-relatively compact set $\mathcal K_Y=\{\|\rho\|_{L^2}\leq C\}$ such that $\rho^\tau(t)\in \mathcal K_Y$ for all $t\geq 0$.
Exploiting the above equicontinuity we can apply again the same variant of the Arzel\'a-Ascoli theorem \cite[prop. 3.3.1]{AGS_08} in any bounded time interval to deduce that there exists a subsequence (not relabeled) and $\rho\in \mathcal C([0,T];Y)$ such that $\rho^\tau(t)\to\rho(t)$ in $Y$ for all $t\in [0,T]$.
A further application of Lebesgue's dominated convergence theorem with $\|\rho^\tau(t)\|_Y\leq C$ shows that $\rho^\tau\to\rho$ in $L^p([0,T];Y)$ for all $p\geq 1$ and fixed $T>0$, and by Cantor's procedure
$$
\rho^\tau\to \rho
\qquad \mbox{in }L^p_{\mathrm{loc}}([0,\infty);Y).
$$
Let now $X:=\mathrm{BV}\cap L^\infty(\Omega)\subset\subset L^2(\Omega)=:B$. We just proved that
$$
X\subset\subset B\subset Y
\quad\mbox{and}\quad
\left\{
\begin{array}{l}
 \rho^\tau	\mbox{ is bounded in } L^\infty(0,\infty;X),\\
 \rho^\tau	\mbox{ is relatively compact in } L^p_{\mathrm{loc}}([0,\infty);Y)
\end{array}
\right.
$$
for all $p\geq 1$.
By standard Aubin-Lions-Simon theory \cite[lem. 9]{simon_compact_sets} we get that $\rho^\tau$ is relatively compact in $L^p_{\mathrm{loc}}([0,\infty);B)$ for all $p\geq 1$. In particular we get pointwise a.e. convergence $\rho^\tau(t,x)\to\rho(t,x)$ (up to extraction of a further subsequence), and a last application of Lebesgue's dominated convergence allows to conclude.
The argument is identical for $\tilde{\rho}^\tau$.
\end{proof}
In order to show that the nonlinear terms pass to the limit as in \eqref{eq:assumption_strong_CV} we shall need the following variant of the Banach-Alaoglu theorem with varying measures:
\begin{lem}[compactness for vector-fields]
\label{lem:variant_banach_alaoglu_vector_fields}
Let $\mathcal O\subset \R^m$ be an open set (not necessarily bounded), $\{\sigma_n\}_{n\geq 0}\subset \M^+(\mathcal O)$ a sequence of finite nonnegative Radon measures narrowly converging to $\sigma\in \M^+(\mathcal O)$, and $\v_n$ a sequence of vector fields on $\mathcal O$. If
$$
\|\v_n\|_{L^2(\mathcal O,\rd\sigma_n;\R^m)}\leq C
$$
then there exists $\v\in L^2(\mathcal O,\rd\sigma;\R^m)$ such that, up to extraction of some subsequence,
$$
\forall \,\zeta\in\mathcal C^\infty_c(\mathcal O;\R^m):\qquad \lim\limits_{n\to\infty}\int_{\mathcal O}\v_n\cdot \zeta \rd \sigma_n=\int_{\mathcal O}\v\cdot \zeta \rd \sigma
$$
and
$$
\|\v\|_{L^2(\mathcal O,\rd\sigma;\R^m)}\leq \liminf\limits_{n\to\infty}\|\v_n\|_{L^2(\mathcal O,\rd\sigma_n;\R^m)}.
$$
\end{lem}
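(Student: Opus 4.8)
The plan is to recast the sequence of vector fields as $\R^m$-valued measures and reduce the whole statement to the lower semicontinuity of a Benamou--Brenier type functional. Concretely, I would introduce the vector measures $\rho_n:=\v_n\,\sigma_n\in\M(\mathcal O;\R^m)$, defined by $\rd\rho_n=\v_n\,\rd\sigma_n$. Since $\sigma_n\to\sigma$ narrowly, testing against the constant function $1\in\mathcal C_b(\mathcal O)$ gives $\sigma_n(\mathcal O)\to\sigma(\mathcal O)$, so the masses are uniformly bounded. By Cauchy--Schwarz the total variations are then controlled,
$$
|\rho_n|(\mathcal O)=\int_{\mathcal O}|\v_n|\,\rd\sigma_n\le \|\v_n\|_{L^2(\rd\sigma_n)}\,\sqrt{\sigma_n(\mathcal O)}\le C',
$$
so Banach--Alaoglu in $\M(\mathcal O;\R^m)=\mathcal C_0(\mathcal O;\R^m)^*$ yields, up to a subsequence, a weak-$\ast$ limit $\rho_n\rightharpoonup\rho$. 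For any $\zeta\in\mathcal C^\infty_c(\mathcal O;\R^m)\subset\mathcal C_0$ one has $\int_{\mathcal O}\v_n\cdot\zeta\,\rd\sigma_n=\int_{\mathcal O}\zeta\cdot\rd\rho_n\to\int_{\mathcal O}\zeta\cdot\rd\rho$, so the first convergence claim follows \emph{provided} I can identify $\rho=\v\,\sigma$ for some $\v\in L^2(\rd\sigma)$.

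The identification and the norm bound will both come from the joint lower semicontinuity of the functional
$$
\Phi(\rho\,|\,\sigma):=\int_{\mathcal O} f\!\left(\frac{\rd\rho}{\rd\lambda},\frac{\rd\sigma}{\rd\lambda}\right)\rd\lambda,
\qquad
f(w,s):=\begin{cases}|w|^2/s & s>0,\\ 0 & (w,s)=(0,0),\\ +\infty & \text{otherwise},\end{cases}
$$
where $\lambda$ is any reference measure dominating both $|\rho|$ and $\sigma$ (e.g. $\lambda=|\rho|+\sigma$); the value is independent of $\lambda$ because $f$ is positively $1$-homogeneous. The key is the dual representation
$$
\Phi(\rho\,|\,\sigma)=\sup\left\{\int_{\mathcal O}\big(2\,\mathbf a\cdot\rd\rho-|\mathbf a|^2\,\rd\sigma\big):\ \mathbf a\in\mathcal C_c(\mathcal O;\R^m)\right\},
$$
which follows from the pointwise Legendre identity $|w|^2/s=\sup_{a\in\R^m}\{2\,a\cdot w-|a|^2 s\}$ and exhibits $\Phi$ as a supremum of functionals that are \emph{continuous} under the weak-$\ast$ convergence $\rho_n\rightharpoonup\rho$ together with the narrow convergence $\sigma_n\to\sigma$ (each tested against the fixed compactly supported $\mathbf a$ and $|\mathbf a|^2$). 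Taking $\liminf_n$ for fixed $\mathbf a$ and then the supremum over $\mathbf a$ gives
$$
\Phi(\rho\,|\,\sigma)\le\liminf_{n\to\infty}\Phi(\rho_n\,|\,\sigma_n)=\liminf_{n\to\infty}\|\v_n\|^2_{L^2(\rd\sigma_n)}\le C^2<\infty.
$$
Finiteness of $\Phi(\rho\,|\,\sigma)$ forces $\rho\ll\sigma$ with density $\v:=\rd\rho/\rd\sigma\in L^2(\rd\sigma)$, and the displayed inequality is precisely the asserted $\|\v\|_{L^2(\rd\sigma)}\le\liminf_n\|\v_n\|_{L^2(\rd\sigma_n)}$.

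I expect the main obstacle to be the careful justification of the lower semicontinuity of $\Phi$, namely the dual representation together with the fact that finiteness of $\Phi$ encodes absolute continuity of $\rho$ with respect to $\sigma$ with an $L^2$ density. This is a standard result on convex $1$-homogeneous integral functionals of measures, which I would either invoke directly (e.g. \cite[Lem. 9.4.3]{AGS_08}) or reprove from the convexity, lower semicontinuity and $1$-homogeneity of $f$. A minor additional care is needed when $\mathcal O$ is unbounded, since mass of $\rho_n$ could a priori escape to infinity; this is harmless, however, because I only claim weak-$\ast$ (not narrow) convergence of $\rho_n$ and test $\Phi$ against compactly supported $\mathbf a$, so every step remains local and the conclusion is unaffected.
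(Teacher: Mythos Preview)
Your argument is correct and is precisely the standard route: encode $\v_n\sigma_n$ as vector measures, extract a weak-$\ast$ limit by Banach--Alaoglu, and identify the limit via the joint lower semicontinuity of the $1$-homogeneous action functional $\Phi(\rho\,|\,\sigma)=\int|w|^2/s$ through its dual representation. The paper itself does not give a proof but simply refers to \cite[thm.~5.4.4]{AGS_08} (and \cite[prop.~5.3]{KMV_15}), whose proof is exactly along the lines you sketch; so there is nothing to compare, and your write-up would serve as a self-contained justification of what the paper takes as a black box.
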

\noindent
The proof can be found in \cite[thm. 5.4.4]{AGS_08} for probability measures, see also \cite[prop. 5.3]{KMV_15} for an abstract version.
As anticipated, we have now
\begin{prop}
\label{prop:compactness_check}
Assume \eqref{hyp:structure_U_Psi}. Then $\rho^\tau,\tilde\rho^\tau$ satisfy the compactness assumption \eqref{eq:assumption_strong_CV} in Theorem~\ref{theo:CV_to_weak_solution}. 
\end{prop}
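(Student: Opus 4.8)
Under \eqref{hyp:structure_U_Psi} we have $\Psi\equiv K\equiv 0$, so the requirement \eqref{eq:assumption_strong_CV} reduces to two weak $L^1_{\mathrm{loc}}$ convergences: the \emph{diffusion flux} $\tilde\rho^\tau\nabla U'(\tilde\rho^\tau)\rightharpoonup\rho\nabla U'(\rho)$, and the \emph{reaction term} $\sqrt{\rho^\tau}\tfrac{\sqrt{\rho^\tau}+\sqrt{\tilde\rho^\tau}}{2}U'(\rho^\tau)\rightharpoonup\rho U'(\rho)$. The plan is to treat these separately, the reaction term being elementary and the diffusion flux requiring a genuine weak--strong identification through the vacuum set $\{\rho=0\}$.

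For the reaction term I would first invoke Proposition~\ref{prop:strong_CV_L1_loc_L1}, whose (second) proof yields, along a further subsequence, pointwise convergence $\rho^\tau,\tilde\rho^\tau\to\rho$ a.e. on $(0,\infty)\times\Omega$. Since $U\in\mathcal C^1$ the map $U'$ is continuous, so $\sqrt{\rho^\tau}\tfrac{\sqrt{\rho^\tau}+\sqrt{\tilde\rho^\tau}}{2}U'(\rho^\tau)\to\rho U'(\rho)$ a.e. The uniform bound $\rho^\tau,\tilde\rho^\tau\leq\|\rho^0\|_{L^\infty}$ from Proposition~\ref{prop:propagation_BV}, together with the monotonicity of $U'$ (from $U''\geq0$), dominates this sequence by the constant $\|\rho^0\|_{L^\infty}U'(\|\rho^0\|_{L^\infty})$, so Lebesgue's dominated convergence gives strong, hence weak, $L^1_{\mathrm{loc}}$ convergence.

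The diffusion flux is the crux. First I would extract the key dissipation estimate: from the Monge--Kantorovich Euler--Lagrange relation \eqref{eq:euler_lagrange_wasserstein_riemannian} the optimal backward map satisfies $\tfrac{\operatorname{id}-\mathbf t}{\tau}=-\nabla U'(\rho^{n+\frac12})$ in $L^2(\rd\rho^{n+\frac12})$, whence the exact identity $\int_\Omega|\nabla U'(\rho^{n+\frac12})|^2\rd\rho^{n+\frac12}=\W^2(\rho^{n+\frac12},\rho^n)/\tau^2$. Multiplying by $\tau$, summing over $n$, and using the total-square-distance estimate \eqref{eq:total_square_distance_hybrid} gives the $\tau$-uniform bound $\int_0^\infty\!\int_\Omega|\nabla U'(\tilde\rho^\tau)|^2\rd\tilde\rho^\tau\,\rd t\leq C$, i.e. $\|\sqrt{\tilde\rho^\tau}\,\nabla U'(\tilde\rho^\tau)\|_{L^2((0,\infty)\times\Omega)}\leq C$. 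I would then apply the vector-field compactness Lemma~\ref{lem:variant_banach_alaoglu_vector_fields} on any bounded cylinder $\mathcal O=(0,T)\times\Omega'$, with $\sigma_\tau=\tilde\rho^\tau\,\rd x\,\rd t$ (converging narrowly to $\rho\,\rd x\,\rd t$ by the strong $L^1_{\mathrm{loc}}$ convergence) and $\v_\tau=\nabla U'(\tilde\rho^\tau)$: this produces a limit field $\v\in L^2(\rd\rho)$ with $\tilde\rho^\tau\nabla U'(\tilde\rho^\tau)\rightharpoonup\rho\v$ when tested against $\mathcal C^\infty_c$ vector fields. Equi-integrability of $\tilde\rho^\tau\nabla U'(\tilde\rho^\tau)$, which follows from the $L^\infty$ bound on $\tilde\rho^\tau$ and the $L^2$ bound on $\sqrt{\tilde\rho^\tau}\,\nabla U'(\tilde\rho^\tau)$ via Cauchy--Schwarz, upgrades this to weak $L^1_{\mathrm{loc}}$ convergence by Dunford--Pettis.

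It remains to identify $\rho\v=\rho\nabla U'(\rho)$, and this is where I expect the main difficulty. The natural device is the pressure $P(\rho):=\rho U'(\rho)-U(\rho)$, for which $P'(\rho)=\rho U''(\rho)$ is bounded on $[0,\|\rho^0\|_{L^\infty}]$ by \eqref{hyp:structure_U_Psi}; thus $P$ is Lipschitz there and formally $\rho\nabla U'(\rho)=\nabla P(\rho)$. Since $P(\tilde\rho^\tau)\to P(\rho)$ strongly in $L^1_{\mathrm{loc}}$ (dominated convergence once more), one has $\nabla P(\tilde\rho^\tau)\to\nabla P(\rho)$ in $\mathcal D'$, and matching this against the weak limit $\rho\v$ should force $\rho\v=\nabla P(\rho)=\rho\nabla U'(\rho)$. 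The delicate point is precisely the passage through vacuum: the flux $\tilde\rho^\tau\nabla U'(\tilde\rho^\tau)$ built from the optimal maps is the \emph{absolutely continuous} object $P'(\tilde\rho^\tau)\nabla\tilde\rho^\tau$, whereas $\nabla P(\tilde\rho^\tau)$ may a priori carry Cantor and jump contributions, so one must argue that these singular parts are asymptotically negligible when tested against a smooth $\nabla\phi$. The cleanest way is to establish the identity $\int_\Omega\tilde\rho^\tau\nabla U'(\tilde\rho^\tau)\cdot\nabla\phi=-\int_\Omega P(\tilde\rho^\tau)\,\Delta\phi$ up to an $o(1)$ error controlled by the uniform $\mathrm{BV}$ bound of Proposition~\ref{prop:propagation_BV} and the strong convergence of $P(\tilde\rho^\tau)$; this identifies $\v=\nabla U'(\rho)$ and thereby secures both lines of \eqref{eq:assumption_strong_CV}, completing the proof.
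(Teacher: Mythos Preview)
Your approach mirrors the paper's exactly: the reaction term by dominated convergence from the strong $L^1_{\mathrm{loc}}$ limit and uniform $L^\infty$ bounds, and the diffusion flux via the dissipation estimate from the $\MK$ Euler--Lagrange, weak compactness through Lemma~\ref{lem:variant_banach_alaoglu_vector_fields}, and identification via the pressure $P(\rho)=\rho U'(\rho)-U(\rho)$. The one difference is that you over-complicate the identification: the paper tracks no $o(1)$ error and does not worry about Cantor/jump parts, because in the standard JKO first-variation underlying \eqref{eq:euler_lagrange_wasserstein_riemannian} the internal-energy contribution already comes out as $-\int_\Omega P(\rho^{n+\frac12})\,\dive\zeta$, so the distributional identity $\tilde\rho^\tau\nabla U'(\tilde\rho^\tau)=\nabla P(\tilde\rho^\tau)$ holds \emph{exactly} at every step; one then simply matches the two distributional limits $\rho\v$ (from Lemma~\ref{lem:variant_banach_alaoglu_vector_fields}) and $\nabla P(\rho)$ (from strong $L^1_{\mathrm{loc}}$ convergence of $P(\tilde\rho^\tau)$). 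Your Dunford--Pettis upgrade is likewise unnecessary, since the proof of Theorem~\ref{theo:CV_to_weak_solution} only tests the flux against smooth $\nabla\phi$, which is precisely what Lemma~\ref{lem:variant_banach_alaoglu_vector_fields} delivers.
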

\begin{proof}
From the strong $L^1_{\mathrm{loc}}([0,\infty);L^1)$ convergence in Proposition~\ref{prop:strong_CV_L1_loc_L1} and the uniform $L^1\cap L^\infty(\Omega)$ bounds in Proposition~\ref{prop:propagation_BV}, a straightforward application of Lebesgue's dominated convergence theorem yields strong convergence $ \sqrt{\rho^\tau}\frac{\sqrt{\tilde\rho^\tau}+ \sqrt{\rho^\tau} }{2}  U'(\rho^\tau) \to\rho U'(\rho)$ at least in $L^1_{\mathrm{loc}}((0,\infty)\times\Omega)$. 
Therefore the reaction terms pass to the limit as in \eqref{eq:assumption_strong_CV}, and we only have to check that the diffusion part does too.

Let $\mathbf t^{n+\frac 12}$ be the (backwards) optimal map from $\rho^{n+\frac 12}$ to $\rho^n$, and recall that the Euler-Lagrange equation \eqref{eq:euler_lagrange_wasserstein_riemannian} holds with $\mu=\rho^n$ and minimizer $\rho^*=\rho^{n+\frac 12}$. An easy density argument shows that \eqref{eq:euler_lagrange_wasserstein_riemannian} can in fact be written as $\frac{\operatorname{id}-\mathbf t^{n+\frac 12}}{\tau}=-\nabla U'(\rho^{n+\frac 12})$ in $L^2(\rd\rho^{n+\frac 12})$, which should be interpreted as an equality in the tangent plane $T_{\rho^{n+\frac 12}}\M^+_\W$.
Taking thus the $L^2(\rd\rho^{n+\frac 12})$ norm we obtain
\begin{equation*}
 \tau\|\nabla U'(\rho^{n+\frac 12})\|^2_{L^2(\rd\rho^{n+\frac 12})}=\frac{1}{\tau}\|\operatorname{id}-\mathbf t^{n+\frac 12}\|^2_{L^2(\rd\rho^{n+\frac 12})}=\frac{1}{\tau}\W^2(\rho^{n+\frac 12},\rho^n).
\end{equation*}
Recalling that the interpolated curve $\tilde{\rho}^\tau(t)$ is piecewise constant and summing from $n=0$ to $n=\lceil T/\tau\rceil +1$ for fixed any $T>0$, we obtain from the total square-distance estimate \eqref{eq:total_square_distance_hybrid}
\begin{equation}
\label{eq:energy_estimate_Wasserstein}
\int_0^T\int_\Omega |\nabla U'(\tilde{\rho}^\tau(t))|^2\rd \tilde\rho^\tau(t)\,\rd t\leq C
\quad\Leftrightarrow\quad
\int_{\mathcal O}|\nabla U'(\tilde{\rho}^\tau)|^2 \rd\sigma^\tau\leq C
\end{equation}
with $\mathcal O=(0,T)\times\Omega\subset \R^{1+d}$ and $\rd\sigma^\tau(t,x)=\rd \tilde\rho^\tau_t(x)\otimes\rd t$. Recall that $\|\tilde \rho^\tau(t)\|_{L^1(\Omega)}\leq \|\rho^0\|_{\Omega}$, so that $\sigma^\tau$ is really a \emph{finite} measure on $\mathcal O$ for finite $T>0$.
From the strong $L^1_{\mathrm{loc}}([0,\infty);L^1)$ convergence $\tilde\rho^\tau\to \rho$ (Proposition~\ref{prop:strong_CV_L1_loc_L1}) it is easy to check that $\sigma^\tau$ converges narrowly to $\rd\sigma(t,x)=\rd \rho_t(x)\otimes \rd t=\rho(t,x) \rd x\rd t$.
Applying Lemma~\ref{lem:variant_banach_alaoglu_vector_fields} we see that there is a vector-field $\v\in L^2(\mathcal O,\rd\sigma)=L^2(0,T;L^2(\rd\rho _t))$ such that, up to extraction of a subsequence,
\begin{equation*}
\label{eq:weak_CV_rho_nabl_IU'}
\int_0^T\int_\Omega \tilde\rho^\tau\nabla U'(\tilde\rho^\tau)\cdot \zeta
\to
\int_0^T\int_\Omega \rho(t,x)\v(t,x)\cdot \zeta(t,x) \, \rd x\rd t
\end{equation*}
for all $\zeta\in\mathcal C^\infty_c((0,T)\times\Omega;\R^n)$.
In order to identify the weak limit $\v$, recall that the thermodynamic pressure $P(\rho):=\rho U'(\rho)-U(\rho)$.
Since $P'(\rho)=\rho U''(\rho)$ our assumptions on $U$ show that $P$ is Lipschitz in any bounded interval $\rho\in[0,M]$.
With the strong convergence $\rho^\tau\to \rho$ and the uniform $L^1\cap L^\infty(\Omega)$ bounds one immediately gets $P(\tilde\rho^\tau)\to P(\rho)$ in $L^1_{\mathrm{loc}}((0,\infty)\times\Omega)$, and as a consequence $\nabla P(\tilde\rho^\tau)\rightharpoonup \nabla P(\rho)$ in the sense of distributions $\mathcal D'((0,T)\times\Omega)$.
Note that the measure $\rd\sigma(t,x)=\rd\rho_t(x)\otimes \rd t$ is finite on any subdomain $(0,T)\times\Omega$, hence $\v\in L^2(\mathcal O,\rd\sigma)\subset L^1(\mathcal O,\rd\sigma)$ and $\rho \v\in L^1((0,T)\times\Omega)$. 
Writing $\nabla P(\rho)=P'(\rho)\nabla\rho =\rho U''(\rho)\nabla\rho=\rho\nabla U'(\rho)$ we conclude that $\rho\v=\nabla P(\rho)=\rho\nabla U'(\rho)$, thus $\v=\nabla U'(\rho)$ at least in $L^2(\rd\rho)$.
A further diagonal extraction shows that the limit $\v$ can be chosen independent of $T$, and the proof is complete.
\end{proof}
{ 
As an immediate consequence, we get
\begin{theo}
Assume \eqref{hyp:structure_U_Psi}. Then, up to extraction of a discrete subsequence not relabeled here, the solution of the $\W$-$\FR$ splitting scheme $\rho^\tau$  converges to a weak solution $\rho$ of the PDE \eqref{eq:PDE}.
\end{theo}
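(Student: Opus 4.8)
The plan is simply to assemble the three main results established above, since under \eqref{hyp:structure_U_Psi} the splitting scheme has already been shown to possess every property required to invoke Theorem~\ref{theo:CV_to_weak_solution}; no new estimate is needed. First I would verify the hypotheses of Corollary~\ref{cor:CV_weak*_interpolation_to_curve}. Because $U(0)=0$ and $U'\geq 0$ force $U\geq 0$, the functional $\mathcal F(\rho)=\int_\Omega U(\rho)$ is nonnegative and hence bounded from below on $\M^+$; together with the standing restriction to finite-energy initial data $\mathcal F(\rho^0)<\infty$ (for which $\rho^0\in\mathrm{BV}\cap L^\infty$, as required by Proposition~\ref{prop:propagation_BV}, is amply sufficient), this yields a limit curve $\rho\in\mathcal C^{1/2}([0,\infty);\M^+_\d)$ and a discrete subsequence $\tau\to 0$ along which $\rho^\tau(t),\tilde\rho^\tau(t)\rightharpoonup\rho(t)$ weakly-$\ast$, pointwise in time, with the two interpolants sharing the \emph{same} limit.

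Next I would feed in the compactness analysis of the previous subsections. Restricting to $\tau$ small enough that the CFL condition $\tau<2/U'(\|\rho^0\|_\infty)$ of Proposition~\ref{prop:propagation_BV} holds (automatic along $\tau\to 0$), that proposition supplies uniform $\mathrm{BV}\cap L^\infty(\Omega)$ bounds for $\rho^\tau(t),\tilde\rho^\tau(t)$. Proposition~\ref{prop:strong_CV_L1_loc_L1} then upgrades the pointwise-in-time weak-$\ast$ convergence to strong $L^1_{\mathrm{loc}}([0,\infty);L^1)$ convergence, and with this strong convergence in hand Proposition~\ref{prop:compactness_check} establishes precisely the two weak limits \eqref{eq:assumption_strong_CV} for the reaction nonlinearity and for the degenerate diffusion flux.

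Finally, since assumption \eqref{eq:assumption_strong_CV} is now verified, Theorem~\ref{theo:CV_to_weak_solution} applies verbatim and identifies the limit $\rho=\lim\rho^\tau=\lim\tilde\rho^\tau$ as a nonnegative weak solution of \eqref{eq:PDE}, with the initial datum $\rho^0$ attained continuously in the $\KFR$-metric (hence narrow) sense. This is exactly the assertion.

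It is worth stressing that the present statement is pure assembly and carries no genuine obstacle of its own: all the difficulty was already dispatched in the two compactness propositions that feed it. The substantive points are the propagation of $\mathrm{BV}\cap L^\infty$ regularity \emph{across the reaction substep}, achieved through the $1$-Lipschitz reconstruction $\rho^{n+1}=R(\rho^{n+\frac12})$ of Proposition~\ref{prop:propagation_BV}, and the identification of the weak limit of the degenerate flux $\tilde\rho^\tau\nabla U'(\tilde\rho^\tau)$ via the thermodynamic pressure $P(\rho)=\rho U'(\rho)-U(\rho)$ as carried out in Proposition~\ref{prop:compactness_check}. Granting those, nothing further is required here.
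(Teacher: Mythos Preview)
Your proposal is correct and follows exactly the paper's approach: the paper's proof is the single sentence ``Simply use Proposition~\ref{prop:compactness_check} to apply Theorem~\ref{theo:CV_to_weak_solution},'' and your write-up just unpacks this by explicitly tracing through the chain of prerequisites (lower bound on $\mathcal F$, Corollary~\ref{cor:CV_weak*_interpolation_to_curve}, Propositions~\ref{prop:propagation_BV}--\ref{prop:compactness_check}). Your concluding remark that the theorem is ``pure assembly'' with all substance already in the compactness propositions is precisely the point.
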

\begin{proof}
Simply use Proposition~\ref{prop:compactness_check} to apply Theorem~\ref{theo:CV_to_weak_solution}.
\end{proof}
}
Our next and final result illustrates perhaps even better the deep interplay between our two-steps variational discretization and the full $\d$ metric:
\begin{prop}
 \label{prop:EDI}
In addition to \eqref{hyp:structure_U_Psi}, assume that $\mathcal{F}(\rho)$ is geodesically convex with respect to the $\W$ structure, i-e $\rho P'(\rho)\geq \left(1-\frac{1}{d}\right)P(\rho)$ with $P(\rho)=\rho U'(\rho)-U(\rho)$ \cite{villani_small}.
Then we have
\begin{equation}
\label{eq:EDI}
 \mathcal F(\rho(t_2))+\int_{t_1}^{t_2}\int_\Omega(|\nabla U'(\rho)|^2+|U'(\rho)|^2)\,\rd\rho\,\rd t\leq \mathcal F(\rho(t_1))
\end{equation}
and for all $0\leq t_1\leq t_2$.
\end{prop}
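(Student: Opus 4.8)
The plan is to establish the EDI as the discrete-to-continuous limit of a sum of two elementary dissipation estimates, one for each substep, exploiting precisely the orthogonal decomposition $\|\grad_\d\mathcal F\|^2 = \|\grad_\W\mathcal F\|^2 + \|\grad_\H\mathcal F\|^2$ that motivates the whole splitting scheme. First I would record the two substep inequalities already derived in the total-square-distance computation, namely $\frac{1}{2\tau}\W^2(\rho^{n+\frac12},\rho^n) + \mathcal F(\rho^{n+\frac12}) \leq \mathcal F(\rho^n)$ and $\frac{1}{2\tau}\H^2(\rho^{n+1},\rho^{n+\frac12}) + \mathcal F(\rho^{n+1}) \leq \mathcal F(\rho^{n+\frac12})$. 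The key upgrade is to sharpen each of these from a \emph{minimizer-beats-competitor} bound into a genuine \emph{dissipation} bound, in which the squared distance is replaced by the square of the tangent-norm of the gradient. For the Fisher-Rao step this is essentially free: by the Euler-Lagrange equation \eqref{eq:euler_lagrange_hellinger_general} the reaction velocity equals $-\tau F'(\rho^{n+1})$ up to the $\sqrt{\rho^*}(\sqrt{\rho^*}+\sqrt{\mu})/2 \approx \rho^*$ twist, so $\frac{1}{2\tau}\H^2(\rho^{n+1},\rho^{n+\frac12})$ is comparable to $\frac{\tau}{2}\int_\Omega |U'(\rho^{n+1})|^2\,\rd\rho^{n+1}$ plus controlled lower-order error.

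The heart of the matter is the Monge-Kantorovich step, and this is where the assumed $\W$-geodesic convexity $\rho P'(\rho) \geq (1-\tfrac1d)P(\rho)$ enters. Here I would invoke the standard machinery for $\lambda$-convex (in this case $0$-convex) functionals along the JKO scheme: geodesic convexity yields the refined estimate $\mathcal F(\rho^{n+\frac12}) + \frac{1}{2\tau}\W^2(\rho^{n+\frac12},\rho^n) + \frac{\tau}{2}\|\grad_\W\mathcal F(\rho^{n+\frac12})\|^2_{L^2(\rd\rho^{n+\frac12})} \leq \mathcal F(\rho^n)$, which is precisely the mechanism by which convexity forces dissipation at the minimal rate. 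Concretely, using the energy estimate \eqref{eq:energy_estimate_Wasserstein} and the identification $\frac{\operatorname{id}-\mathbf t^{n+\frac12}}{\tau} = -\nabla U'(\rho^{n+\frac12})$ in $L^2(\rd\rho^{n+\frac12})$ established in the proof of Proposition~\ref{prop:compactness_check}, the Wasserstein dissipation term is exactly $\tau\|\nabla U'(\rho^{n+\frac12})\|^2_{L^2(\rd\rho^{n+\frac12})}$. Summing the two improved substep inequalities from $n=N_1$ to $N_2-1$ collapses the energy telescopically to $\mathcal F(\rho^{N_2}) + (\text{discrete dissipation}) \leq \mathcal F(\rho^{N_1})$, where the discrete dissipation is $\frac{\tau}{2}\sum_n \big(\|\nabla U'(\tilde\rho^\tau)\|^2_{L^2(\rd\tilde\rho^\tau)} + \|U'(\rho^\tau)\|^2_{L^2(\rd\rho^\tau)}\big)$, which rewrites as $\frac12\int_{T_1}^{T_2}\int_\Omega(|\nabla U'|^2 + |U'|^2)\,\rd\rho\,\rd t$ on the interpolants.

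The final step is to pass to the limit $\tau\to 0$. The energy values converge because $\mathcal F$ is lower semicontinuous with respect to the strong $L^1_{\mathrm{loc}}$ convergence of Proposition~\ref{prop:strong_CV_L1_loc_L1}, giving $\mathcal F(\rho(t_2)) \leq \liminf_\tau \mathcal F(\rho^\tau(t_2))$ and likewise an upper bound on the right. For the dissipation integral I would use weak lower semicontinuity: the Wasserstein piece is handled exactly by Lemma~\ref{lem:variant_banach_alaoglu_vector_fields}, which delivers both the weak convergence $\tilde\rho^\tau\nabla U'(\tilde\rho^\tau) \rightharpoonup \rho\nabla U'(\rho)$ and the crucial $\liminf$ inequality $\|\v\|_{L^2(\rd\sigma)} \leq \liminf \|\nabla U'(\tilde\rho^\tau)\|_{L^2(\rd\sigma^\tau)}$ with $\v = \nabla U'(\rho)$; the reaction piece is treated identically with $\sigma^\tau$ and the scalar field $U'(\rho^\tau)$. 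The main obstacle I anticipate is justifying the sharpened \emph{convex} substep inequality for the Monge-Kantorovich step rigorously rather than merely formally, since it requires the full geodesic-convexity argument along $\W$-geodesics together with the twist $\frac{\sqrt{\rho^{n+1}}(\sqrt{\rho^{n+1}}+\sqrt{\rho^{n+\frac12}})}{2}\approx\rho^{n+1}$ in the Fisher-Rao term; controlling this twist uniformly and showing its error is lower order in $\tau$, so that it does not spoil the $\liminf$, is the delicate point. Modulo that, the combination of telescoping, convexity-forced dissipation, and the two semicontinuity arguments yields \eqref{eq:EDI}.
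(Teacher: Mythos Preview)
Your overall architecture---two substep dissipation inequalities, telescoping sum, passage to the limit via lower semicontinuity---matches the paper. However, your treatment of the Fisher--Rao step is off, and this costs you a factor of two in the final inequality.

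For the Monge--Kantorovich step you correctly invoke $\W$-geodesic convexity: the above-tangent inequality
\[
\mathcal F(\rho^n) \geq \mathcal F(\rho^{n+\frac12}) + \int_\Omega (\mathbf t^{n+\frac12}-\operatorname{id})\cdot\nabla U'(\rho^{n+\frac12})\,\rd\rho^{n+\frac12}
\]
combines with the Euler--Lagrange identity $\mathbf t^{n+\frac12}-\operatorname{id}=\tau\nabla U'(\rho^{n+\frac12})$ in $L^2(\rd\rho^{n+\frac12})$ to give the exact dissipation $\tau\int_\Omega |\nabla U'(\rho^{n+\frac12})|^2\,\rd\rho^{n+\frac12}$, with no remainder.

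The paper treats the Fisher--Rao step \emph{symmetrically}, and this is what you are missing. Hypothesis \eqref{hyp:structure_U_Psi} already implies $\rho U''(\rho)+\tfrac12 U'(\rho)\geq 0$, which is precisely $\H$-geodesic convexity, i.e.\ convexity of $s\mapsto U(s^2)$ in the variable $s=\sqrt{\rho}$. The corresponding above-tangent inequality reads
\[
\mathcal F(\rho^{n+\frac12}) \geq \mathcal F(\rho^{n+1}) + \int_\Omega 2\sqrt{\rho^{n+1}}\,U'(\rho^{n+1})\big(\sqrt{\rho^{n+\frac12}}-\sqrt{\rho^{n+1}}\big),
\]
and the \emph{untwisted} Euler--Lagrange equation \eqref{eq:euler_lagrange_hellinger_general} turns the last integral exactly into $\tau\int_\Omega |U'(\rho^{n+1})|^2\,\rd\rho^{n+1}$. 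There is no twist and no lower-order error to control: the ``delicate point'' you anticipate simply does not arise. By contrast, your route for the $\H$ step---bare optimality $\frac{1}{2\tau}\H^2+\mathcal F(\rho^{n+1})\leq\mathcal F(\rho^{n+\frac12})$ together with $\H^2=\tau^2\int|U'|^2\,\rd\rho^{n+1}$---yields only $\frac{\tau}{2}\int|U'|^2\,\rd\rho^{n+1}$. This is why your discrete dissipation carries the spurious factor $\tfrac12$ and falls short of \eqref{eq:EDI}.

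In short: use the above-tangent characterization of geodesic convexity for \emph{both} metrics, not just for $\W$. Summing then gives exactly $\tau\big(\int|\nabla U'|^2\rd\rho^{n+\frac12}+\int|U'|^2\rd\rho^{n+1}\big)$ per step, and the limit passage is as you describe.
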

From the discussion in section \ref{section:FRHKW} we known that $\|U'(\rho)\|^2_{H^1(\rd\rho)}$ can be interpreted either as the metric slope $|\partial \mathcal F(\rho)|^2=\|\grad_\d\mathcal F(\rho)\|^2_\KFR$ or, through the continuity equation $\partial_t\rho=\dive(\rho\nabla U'(\rho))-\rho U'(\rho)$, as the metric speed $|\rho'(t)|^2$ with respect to our distance $\d$.
Hence \eqref{eq:EDI} can be rephrased as the Energy Dissipation Inequality (EDI)
$$
\mathcal F(\rho(t_2))+\int_{t_1}^{t_2}\left\{\frac 12 |\rho'(t)|^2 + \frac 12|\partial\mathcal F(\rho(t))|^2\right\}\,\rd t\leq \mathcal F(\rho(t_1)),
$$
which is one of the possible formulations of gradient flows in abstract metric spaces.
{ 
We refer the reader to \cite{ambrosio_gigli_user_guide_OT_13,AGS_08} for the connection between EDIs in abstract metric spaces and gradient flow formulations.
However, and to the best of our knowledge, no full and tractable characterizations of metric speeds $|\rho'(t)|$ and metric slopes $|\partial \mathcal F(\rho)|$ are available at this early stage of the general $\KFR$ theory (see however \cite{KMV_15} for the characterization of Lipschitz curves).
For the sake of rigor we thus prefer to state the dissipation inequality in the PDE-oriented form \eqref{eq:EDI}, rather than in the abstract metric setting.}

Note that \eqref{hyp:structure_U_Psi} already implies $\rho U''(\rho)+U'(\rho)/2\geq 0$, which is equivalent to geodesic convexity with respect to $\H$.
Thus we essentially assumed here that $\mathcal F$ is separately geodesically convex with respect to each of the $\W,\H$ structures, respectively, and it is not surprising that we recover in the end a dissipation inequality for the full $\KFR$ metrics.
\begin{proof}
Let $\mathbf t^{n+\frac 12}$ be the optimal map from $\rho^{n+\frac 12}$ to $\rho^n$. By the above-tangent characterization of the displacement convexity with respect to $\W$ \cite[prop. 5.29]{villani_small} we have
\begin{align*}
\mathcal F(\rho^{n}) & \geq \mathcal F(\rho^{n+\frac 12})+\int_\Omega (\mathbf t^{n+\frac 12}-\operatorname{id})\cdot \nabla U'(\rho^{n+\frac 1 2})\rd\rho^{n+\frac 12}\\
& = \mathcal F(\rho^{n+\frac 12})+\tau\int_\Omega|\nabla U'(\rho^{n+\frac 1 2})|^2\rd\rho^{n+\frac 12},
\end{align*}
where the last equality follows by reinterpreting the Euler-Lagrange \eqref{eq:euler_lagrange_wasserstein_riemannian} as $\mathbf t^{n+\frac 12}-\operatorname{id}=\tau\nabla U'(\rho^{n+\frac 1 2})$ in $L^2(\rd\rho^{n+\frac{1}{2}})$.

For the reaction part let us recall that $\rho U''(\rho)+\frac{U'(\rho)}{2}\geq 0$ corresponds to the convexity of $s\mapsto U(s^2)$ in $s=\sqrt{\rho}$.
Using this convexity we obtain
\begin{align*}
\mathcal F(\rho^{n+\frac 12}) & \geq \mathcal F(\rho^{n+1})+\int_\Omega 2\sqrt{\rho^{n+1}}U'(\rho^{n+1})\,\left(\sqrt{\rho^{n+\frac 12}} - \sqrt{\rho^{n+1}}\right)\\
& = \mathcal F(\rho^{n+1})+\tau \int_\Omega |U'(\rho^{n+1})|^2\rd \rho^{n+1},
\end{align*}
 where the last equality follows now by reinterpreting  the Euler-Lagrange equation \eqref{eq:euler_lagrange_hellinger_general} as $2\frac{\sqrt{\rho^{n+1}}-\sqrt{\rho^{n+\frac 12}}}{\sqrt{\rho^{n+1}}}=-\tau U'(\rho^{n+1})$ in $L^2(\rd\rho^{n+1})$.
We get altogether
$$
\mathcal F(\rho^{n+1})+ \tau\left(\int_{\Omega}|\nabla U'(\rho^{n+\frac 12})|^2\rd\rho^{n+\frac{1}{2}}+\int_\Omega |U'(\rho^{n+1})|^2\rd\rho^{n+1}\right) \leq \mathcal{F}(\rho^n).
$$
For any $0\leq t_1\leq t_2$ let now $N_1,N_2\in \N$ such that $\rho^\tau(t_i)=\rho^{N_i}$, and $T_i=N_i\tau$.
Summing the previous inequality from $n=N_1$ to $n=N_2-1$ gives
\begin{equation}
 \label{eq:EDI_approx}
\mathcal F(\rho^\tau(t_2))
+\int_{T_1}^{T_2}\int_{\Omega}|\nabla U'(\tilde\rho^\tau)|^2\,\rd\tilde\rho^\tau\rd t
+\int_{T_1}^{T_2}\int_\Omega |U'(\rho^\tau)|^2\,\rd\rho^\tau\rd t
\leq \mathcal F(\rho^\tau(t_1)).
\end{equation}
We proved in Proposition~\ref{prop:compactness_check} that $\tilde\rho^\tau\nabla U'(\tilde{\rho}^\tau)\rightharpoonup 
\rho \nabla U'(\rho)$, and observe that $T_i\to t_i$ as $\tau\to 0$. From the energy estimate \eqref{eq:energy_estimate_Wasserstein} and the lower semi-continuity in Lemma~\ref{lem:variant_banach_alaoglu_vector_fields} we deduce that
\begin{equation*}
\int_{t_1}^{t_2}\int_{\Omega}|\nabla U'(\rho)|^2\,\rd\rho\,\rd t
\leq \liminf\limits_{\tau\to 0}\int_{T_1}^{T_2}\int_{\Omega}|\nabla U'(\tilde\rho^\tau)|^2\,\rd\tilde\rho^\tau\rd t,
\end{equation*}
and from the strong convergence in Proposition~\ref{prop:strong_CV_L1_loc_L1} with the uniform $L^1\cap L^\infty(\Omega)$ bounds (Proposition~\ref{prop:propagation_BV}) it is easy to see that
$$
\int_{t_1}^{t_2}\int_{\Omega}|U'(\rho)|^2\,\rd\rho\,\rd t =\lim\limits_{\tau\to 0}\int_{T_1}^{T_2}\int_\Omega |U'(\rho^\tau)|^2\,\rd\rho^\tau\rd t.
$$
Similarly one can verify that
$$
\forall\,t\geq 0:\qquad \mathcal{F}(\rho^\tau(t))=\int_\Omega U(\rho^\tau(t))\to \int_\Omega U(\rho(t)) =\mathcal F(\rho(t)).
$$
Indeed with our assumptions $U$ is Lipschitz in any bounded interval $\rho\in [0,M]$, $\|\rho^\tau(t)\|_{L^\infty}\leq M=\|\rho^0\|_{L^\infty}$ uniformly in $\tau$, and in the first proof of Proposition~\ref{prop:strong_CV_L1_loc_L1} we obtained strong $L^1(\Omega)$ convergence $\rho^\tau(t)\to \rho(t)$ pointwise in time.
As a consequence we can pass to the $\liminf$ in \eqref{eq:EDI_approx} to retrieve \eqref{eq:EDI} and the proof is complete.
\end{proof}

\subsection*{Acknowledgments}
LM was partially supported by the Portuguese National Science Foundation through fellowship BPD/88207/2012 and by the UT Austin/Portugal CoLab program \emph{Phase Transitions and Free Boundary Problems}. T. O. Gallou\"et was supported by the ANR project ISOTACE (ANR-12-MONU-013) hosted at CMLS, \'Ecole polytechnique, CNRS, Universit\'e Paris-Saclay and by the fond de la Recherche Scientifique-FNRS under grant MIS F.4539.16.
 We whish to thank the anonymous referees for their useful comments and suggestions.

%
\bibliographystyle{plain}
\bibliography{./biblio}

\end{document}